\newtheorem{thm}{Theorem}[section]
\newtheorem{lemma}[thm]{Lemma}
\theoremstyle{definition}
\newtheorem{remark}[thm]{Remark}
\def\XXint#1#2#3{{\setbox0=\hbox{$#1{#2#3}{\int}$}
         \vcenter{\hbox{$#2#3$}}\kern-.5\wd0}}
\def\R{\mathbb{R}}
\def\e{\varepsilon}
\def\wN{N^*}
\def\Sd{\mathbb{S}^{d-1}}
\numberwithin{equation}{section}
\begin{document}

\title{Critical Sets of Solutions of Elliptic Equations \\ in Periodic Homogenization}

\author{Fanghua Lin \thanks{Supported in part by NSF grant DMS-1955249.}
\qquad
Zhongwei Shen\thanks{Supported in part by NSF grant DMS-1856235 and by Simons Fellowship}}
\date{}
\maketitle
\begin{abstract}

In this paper we study critical sets of solutions $u_\e$  of second-order elliptic equations in divergence form with rapidly oscillating and periodic coefficients.
We show that the $(d-2)$-dimensional Hausdorff  measures of the critical sets are bounded uniformly with respect to the period $\e$, 
provided that  doubling indices for solutions are bounded.
 The key step is an estimate of "turning"  for the projection of a non-constant solution $u_\e$ 
 onto the subspace of spherical harmonics of order $\ell$, when the doubling index for $u_\e$ on a
 sphere $\partial B(0, r)$ is trapped between $\ell -\delta$ and $\ell +\delta$, for $r$ between $1$ and a minimal radius $r^*\ge C_0\e$.
This estimate is proved by using  harmonic approximation successively.
With a suitable $L^2$ renormalization as well as rescaling  we are able to control the accumulated errors introduced  by homogenization and projection.
Our proof also gives uniform bounds for  Minkowski contents of the critical sets.

\medskip

\noindent{\it Keywords}: Critical Set;  Homogenization; Hausdorff Measure; Doubling Index.

\medskip

\noindent {\it MR (2010) Subject Classification}: 35J15, 35B27.

\end{abstract}

\section{Introduction}

In this paper we initiate the study of critical points of solutions of elliptic equations in homogenization.
More precisely, we consider a family of second-order elliptic operators in divergence form,
\begin{equation}\label{op}
\mathcal{L}_\e =-\text{\rm div} (A(x/\e) \nabla ),
\end{equation}
where $0< \e\le 1$ and  $A(y) = (a_{ij} (y))$ is a  $d\times d$ matrix-valued function in $\R^d$.
Let $u_\e$ be a non-constant  weak solution of $\mathcal{L}_\e (u_\e)=0$ in a unit ball $B(0, 1)$ and
\begin{equation}\label{critical-0}
\mathcal{C} (u_\e) =\big\{ x: \ |\nabla u_\e (x)|=0 \big\},
\end{equation}
the critical set of $u_\e$.
Under some  smoothness assumptions on the coefficient matrix,
it is known that  the critical set is $(d-2)$-dimensional and its $(d-2)$-dimensional Hausdorff measure  is finite.
Moreover, in the case $\e=1$,  it was proved in \cite{Han-Lin-H-1998} that
\begin{equation}\label{critical-01}
\mathcal{H}^{d-2} (\mathcal{C} (u)\cap B(0, 1/2)) \le C (N),
\end{equation}
where $\mathcal{L}_1 (u)=0$ in $B(0, 1)$, $N$ is the Almgren frequency of $u-u(0)$ on $B(0, 1)$, and
$C(N)$ is a constant that depends on $N$.
The main purpose of this paper is to establish the Hausdorff measure estimate \eqref{critical-01} 
for solutions of $\mathcal{L}_\e (u_\e)=0$, with $C(N)$ {\it independent } of $\e$, under the assumption that $A$ is periodic.

Throughout the paper, unless indicated otherwise, we shall assume that 
\begin{itemize}

\item

(ellipticity) there exists some $\lambda\in (0, 1]$ such that
\begin{equation}\label{ellipticity}
\lambda |\xi|^2 \le \langle A(y) \xi, \xi \rangle\quad \text{ and } \quad 
 |\langle A(y)\xi, \zeta\rangle | \le \lambda^{-1} |\xi| |\zeta| \quad \text{ for any } y, \xi, \zeta\in \R^d;
\end{equation}

\item 

(periodicity) $A$ is periodic with respect to some lattice $\Gamma$ of $\R^d$,
\begin{equation}\label{periodicity}
A(y+z)  =A (y) \quad \text{ for any } y\in \R^d \text{ and } z\in \Gamma;
\end{equation}

\item

(smoothness) there exists some $M>0$ such that
\begin{equation}\label{smoothness}
|A(x)-A(y)| \le M |x-y| \quad \text{ for any } x, y \in \R^d.
\end{equation}
\end{itemize}
Let $\chi (y)=(\chi_j (y))$ denote the first-order corrector for $\mathcal{L}_\e$.
We will also assume that the periodic matrix $I +\nabla \chi$ is nonsingular  and that
\begin{equation}\label{inv-0}
\text{det} (I+\nabla \chi)  \ge  \mu
\end{equation}
for some $\mu>0$.
The condition \eqref{inv-0} holds in the case $d=2$ if $A$ is periodic and H\"older continuous.
In this paper we assume $d\ge 3$; the two-dimensional case is treated separately in  \cite{Two-D},
using a different approach.
Note that the condition \eqref{inv-0} is satisfied if $\|\text{\rm div}(A)\|_\infty < c_0$
for some small constant $c_0$ depending on $d$, $\lambda$, $\Gamma$ and $M$.
Since $u_\e = x_j +\e \chi_j (x/\e)$ is a solution  for $j=1, \dots , d$, the  condition
  is  also necessary for the Minkowski estimate \eqref{H-M-00} below with $r>\e^{d/(d-2)}$.

The following is the main result of this paper.

\begin{thm}\label{main-thm}
Suppose  that $A=A(y)$ satisfies the conditions \eqref{ellipticity}, \eqref{periodicity},  \eqref{smoothness} and \eqref{inv-0}.
Let $u_\e \in H^1(E_2)$ be a non-constant weak solution of $\mathcal{L}_\e (u_\e)=0$ in $E_2\subset\R^d$, $d\ge 3$.
Suppose that $u_\e (0)=0$ and
\begin{equation}\label{doubling-0}
\fint_{E_2} u_\e^2 \le 4^N  \fint_{E_1} u_\e^2
\end{equation}
for some $N>1$. Then
\begin{equation}\label{H-M-00}
|  \big\{ x : \text{\rm dist} (x, \mathcal{C}(u_\e)\cap E_{1/2}  )< r \big\}| 
\le C(N) r^2
\end{equation}
for $0< r< 1$, 
and consequently, 
\begin{equation}\label{H-M-0}
\mathcal{H}^{d-2} \big\{ x\in E_{1/2}: \ |\nabla u_\e (x)|=0 \big\}
\le C(N),
\end{equation}
where $C(N)$ depends at most on $d$, $\lambda$, $\Gamma$, $M$, $\mu$,  and $N$.
\end{thm}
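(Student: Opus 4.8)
\medskip
\noindent The plan is to reduce the whole statement to a single \emph{covering estimate}: for every $0<r<1$, the set $\mathcal{C}(u_\e)\cap E_{1/2}$ can be covered by at most $C(N)\,r^{-(d-2)}$ balls of radius $r$. Granting this, \eqref{H-M-0} follows on letting $r\to0$, and \eqref{H-M-00} follows because such a cover has total volume $\le C(N)\,r^{-(d-2)}\cdot r^d=C(N)\,r^2$. Fix a minimal scale $r^*=C_0\e$ with $C_0$ a large constant to be chosen. The covering estimate is proved in two stages. For $r\ge r^*$ it is proved directly by the homogenization analysis described in the next three paragraphs. For $r<r^*$ one first applies the $r=r^*$ case to cover $\mathcal{C}(u_\e)\cap E_{1/2}$ by $C(N)(r^*)^{-(d-2)}$ balls $B(x_j,r^*)$ centered at critical points, and then rescales each $B(x_j,2r^*)$ to unit size: $v(y)=u_\e(x_j+r^*y)$ solves a divergence-form equation whose coefficient matrix $A(x_j/\e+C_0y)$ is elliptic with the same $\lambda$ and Lipschitz with constant $MC_0$ --- a \emph{fixed} constant, with no oscillation left at unit scale --- so the Han--Lin estimate \eqref{critical-01}, and its Minkowski counterpart obtained by the same method, apply to $v$ with frequency controlled by the doubling index of $u_\e$ at scale $r^*$; summing the $C(N)(r^*)^{-(d-2)}$ contributions, each costing $\lesssim(r^*/r)^{d-2}$ balls of radius $r$, gives the cover for $r<r^*$. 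Here one uses that the doubling index of $u_\e$ is $\le C(N)$ at every scale in $[r^*,1]$, i.e.\ the uniform doubling / three--sphere inequality for $\mathcal{L}_\e$ down to scale $\e$, which is standard in quantitative homogenization and rests on \eqref{smoothness}.

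\medskip
For the regime $r\ge r^*$, fix $x_0\in\mathcal{C}(u_\e)\cap E_{1/2}$ and a scale $s\in[r^*,1/2]$, and work with the $L^2$--renormalized function on spheres, $\widehat u_\e(x_0;s,\theta)=(u_\e(x_0+s\theta)-u_\e(x_0))/\rho(s)$ with $\rho(s)^2=\fint_{\mathbb{S}^{d-1}}(u_\e(x_0+s\theta)-u_\e(x_0))^2\,d\theta$. Using the first--order corrector $\chi$ and the $O(\e/s)$ quantitative homogenization error, compare $u_\e-u_\e(x_0)$ on $B(x_0,s)$ with a solution $w_0$ of the constant--coefficient homogenized equation, and then, after the fixed linear change of variables that symmetrizes the homogenized matrix, with a harmonic function; in the renormalized scaling all these comparisons have size $O((\e/s)^\theta)$, and the doubling index $N_\e(x_0,s)$ is within $O((\e/s)^\theta)$ of the Almgren frequency of the comparison harmonic function. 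Two consequences. First, by the uniform $C^{1,\alpha}$ estimates valid for Lipschitz periodic coefficients, the $H^1$ comparison upgrades to a pointwise gradient comparison $\nabla u_\e(x)=(I+\nabla\chi(x/\e))\nabla w_0(x)+O((\e/s)^\theta\rho(s)/s)$ on $B(x_0,s/2)$; since \eqref{inv-0} gives two--sided bounds on $I+\nabla\chi$, the relation $\nabla u_\e(x_0)=0$ forces $\nabla w_0(x_0)=O((\e/s)^\theta\rho(s)/s)$, so $N_\e(x_0,s)\ge 2-\delta$ for all $s\ge r^*$ once $C_0$ is large, and more generally $\mathcal{C}(u_\e)\cap B(x_0,s/2)$ lies in an $O((\e/s)^{\theta'}s)$--tube around $\mathcal{C}(w_0)\cap B(x_0,s)$. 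Second, the doubling index is almost monotone on $[r^*,1]$, with total oscillation error $\sum_s(\e/s)^\theta\lesssim C_0^{-\theta}$.

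\medskip
The heart of the argument is the turning estimate. Suppose $N_\e(x_0,s)\in(\ell-\delta,\ell+\delta)$ for all $s$ in a subinterval $(s_2,s_1)\subseteq(r^*,1/2)$, with $\ell\ge2$ the least such integer. Let $P_\ell(s)$ be the $L^2$--projection of $\widehat u_\e(x_0;s,\cdot)$ onto the finite--dimensional space of degree--$\ell$ spherical harmonics and $e_\ell(s)=P_\ell(s)/\|P_\ell(s)\|$ its direction. A harmonic function that is homogeneous of a single degree $\ell$ has, after renormalization, an $s$--independent trace on spheres and hence zero turning; all turning of $e_\ell$ is therefore produced by the off--degree--$\ell$ energy fraction $\theta(s):=\|\widehat u_\e(x_0;s,\cdot)-P_\ell(s)\|_{L^2}^2$ and by the $O(\e/s)$ homogenization error, giving the one--step bound $|s\,\partial_s e_\ell(s)|\lesssim\sqrt{\theta(s)}+\e/s$. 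The point --- and this is exactly where the $L^2$--renormalization together with the scale--by--scale rescaling is used --- is that $\theta(s)$ is not merely $\lesssim\delta$ uniformly in $s$ but is \emph{geometrically concentrated near the two endpoints} $s_1,s_2$ in the logarithmic scale: the energy of a mode of degree $k\ne\ell$ scales like $s^{2(k-\ell)}$ relative to the degree--$\ell$ mode and must be $\lesssim\delta$ at the endpoint where it is largest, for otherwise $N_\e$ would leave the window; hence $\sum_{s\ \mathrm{dyadic}}\sqrt{\theta(s)}\le C(N)\sqrt\delta$ (the tail in $k$ absorbed by interior regularity), while $\sum_s\e/s\lesssim\e/r^*=C_0^{-1}$. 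Running harmonic approximation successively over the dyadic annuli in $(s_2,s_1)$ and summing the one--step bounds then yields $\int_{s_2}^{s_1}|\partial_s e_\ell(s)|\,ds\le C(N)\sqrt\delta+C_0^{-1}$, which is smaller than any prescribed $\kappa_0$ once $\delta$ is small and $C_0$ large. Consequently, on such a trapped interval $\widehat u_\e(x_0;s,\cdot)$ stays within $\kappa_0$ in $L^2$ of one fixed homogeneous (homogenized--)harmonic polynomial of degree $\ell\ge2$.

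\medskip
With the turning estimate in hand, the covering estimate for $r\ge r^*$ follows from a quantitative--stratification / Reifenberg--type argument as in \cite{Han-Lin-H-1998}, transplanted through the homogenization comparison. Partitioning $[r^*,1]$ logarithmically into $O(C(N)/\delta)$ intervals, on each of which $N_\e(x_0,\cdot)$ is either trapped in some window $(\ell-\delta,\ell+\delta)$ or transitioning between consecutive integers, one has only $O(C(N))$ transitions, each costing merely a dimensional multiplicative constant in the cover. On a trapped interval with degree $\ell\ge2$, the turning estimate pins the blow--ups of $u_\e$ near $x_0$ to a fixed homogeneous harmonic polynomial whose critical set is contained in a $(d-2)$--plane (or, when the tangent genuinely involves more than two variables, in a cone of codimension $\ge3$); combined with the tube bound from the second paragraph and a cone--splitting argument, this covers $\mathcal{C}(u_\e)$ at each such scale by boundedly many balls of a fixed smaller radius near that plane, and an induction on scale accumulates these into $C(N)\,r^{-(d-2)}$ balls of radius $r$. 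The main obstacle is precisely the turning estimate with a constant independent of $\e$: a naive summation of the per--scale homogenization and projection errors over the $\sim\log(1/\e)$ dyadic scales between $r^*$ and $1$ diverges, and the role of the $L^2$--renormalization, applied scale by scale after rescaling each dyadic ball to unit size, is to convert these errors into geometric series in $\log s$, concentrated near the ends of each trapped interval, whose sums are $\e$--independent.
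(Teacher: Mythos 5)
Your overall architecture (harmonic approximation scale by scale, a degree-$\ell$ turning estimate on trapped intervals, a minimal radius $r^*\sim C_0\e$ below which the problem is rescaled to a fixed-coefficient one and handled by \cite{Han-Lin-H-1998, Naber-2017}, then a stratification/covering argument) matches the paper, but your mechanism for the turning estimate is genuinely different: you bound the per-scale turning by $\sqrt{\theta(s)}$, where $\theta(s)$ is the off-degree-$\ell$ energy fraction, and recover summability by claiming $\theta(s)$ decays geometrically away from the endpoints of the trapped interval; the paper instead uses the Weiss monotonicity formula to bound the one-step turning \emph{linearly} by the one-step frequency drop (Lemmas \ref{h-1-lemma} and \ref{h-2-lemma}), so that the sum over the $\sim\log(1/\e)$ scales telescopes to $C\delta_0+C(\e/r)^{1/2}$. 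Your route is not ruled out by the paper's warning about $\eta^\alpha$, $\alpha<1$, because you are not summing powers of the drop; but its crux --- that the endpoint constraint $\theta\lesssim\delta$ propagates geometrically into the interior \emph{for $u_\e$}, i.e.\ through a chain of different harmonic approximants each carrying an $O((\e/s)^{1/2})$ switching error --- is asserted, not proved. It can plausibly be closed by a recursion on mode fractions of the form $\alpha(s/2)\le \tfrac12\alpha(s)+C(\e/s)^{1/2}$ (downward for modes $>\ell$, upward from $r^*$ for modes $<\ell$), but this is exactly the ``accumulation of errors'' issue the whole paper is organized around, so it must be written out, not gestured at.

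The genuine gap is in the covering stage for $r\ge r^*$. The statement that the blow-up ``fixed homogeneous harmonic polynomial of degree $\ell\ge 2$'' has its ``critical set contained in a $(d-2)$-plane'' is false in general (e.g.\ $P=(x_1^2-x_2^2)x_3$ is harmonic of degree $3$ and its critical set is a union of three lines not contained in one plane), and the asserted tube bound $\mathcal{C}(u_\e)\cap B(x_0,s/2)\subset\{\mathrm{dist}(\cdot,\mathcal{C}(w_0))\lesssim(\e/s)^{\theta'}s\}$ does not follow from $\nabla u_\e=(I+\nabla\chi(x/\e))\nabla w_0+O((\e/s)^{\theta}\rho/s)$: that only places critical points of $u_\e$ in the set where $|\nabla w_0|$ is small, and no uniform \L{}ojasiewicz-type statement is available over the compact family of normalized degree-$\ell$ polynomials. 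What the codimension-two covering actually requires is a \emph{two-point} estimate: if $x_0$ and $x_1$ are critical points whose doubling indices are both pinched near $\ell$ down to scales comparable to $|x_0-x_1|$, then the normalized projection $\psi_{\ell,x_0}$ is almost invariant in the direction $(x_1-x_0)/|x_1-x_0|$ --- this is \eqref{rad-6}, i.e.\ Theorem \ref{20-thm} via Lemma \ref{lemma-sh} --- which then feeds the almost-invariant-subspace lemma (Lemma \ref{lemma-sub-1}) to put the centers of a Vitali cover on a Lipschitz graph over a plane of dimension $\le d-2$ (Theorem \ref{thm-sub3}), together with the absence of critical points outside a cone around that plane (Remark \ref{remark-sp}) to control the leftover balls, and an induction on $\ell$ driven by a quantitative index drop (Theorem \ref{fq-thm}). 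Your single-center turning estimate does not imply this two-point almost-invariance (in the example above, a nearby critical point of the limit polynomial does not make it translation-invariant; what saves the day is that at such a point the index pins at a \emph{lower} integer, which is precisely the stratification), and ``a cone-splitting argument'' for $u_\e$ with constants independent of $\e$ is an additional harmonic-approximation argument that your sketch does not supply. A minor further point: the uniform bound $\wN(u_\e,x,s)\le C(N)$ down to scale $\e$ uses periodic homogenization (compactness), not only the Lipschitz condition \eqref{smoothness}.
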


In Theorem \ref{main-thm} we have used the notation 
\begin{equation}
E_r = \big\{ x \in \R^d: \ 2\langle (\widehat{A} + (\widehat{A})^T)^{-1} x,  x\rangle <  r^2 \big\}
\end{equation}
for $r>0$,
where $\widehat{A}$ denotes the homogenized matrix for $A$.
Notice that $E_r=B(0, r)$ if $\widehat{A}+(\widehat{A})^T=2I$.

Classical results in the study of nodal, singular, and critical sets for solutions and eigenfunctions of elliptic operators
may be found in \cite{D-Fefferman, Hardt-Simon, Lin-N, Han-1994,  Han-Lin-2, Han-Lin-H-1998, HHN-1999}.
We refer the reader to \cite{Naber-2015, Naber-2017, BET-2017,  Logunov-2018-1, Logunov-2018-2} and their references for more recent work in this area.
In particular, for critical sets in the case $\e=1$, the local finiteness of the $(d-2)$-dimensional Hausdorff measure  was established
in \cite{HHN-1999}, and the measure estimate \eqref{critical-01} was proved in \cite{Han-Lin-H-1998}.
The results in \cite{HHN-1999, Han-Lin-H-1998} were proved under the assumption that the coefficients are sufficiently smooth.
Minkowski  estimates for nodal and critical sets, similar to \eqref{H-M-00},  were obtained in \cite{Naber-2017} under the
Lipschitz condition \eqref{smoothness}.
The smoothness condition \eqref{smoothness} is more or less sharp, as the unique continuation property fails
for operators with H\"older continuous coefficients.

The quantitative results for nodal and critical sets in the references mentioned above do not extend directly to the operator $\mathcal{L}_\e$, 
since the  bounding constants $C(N)$ depend on the smoothness of coefficients.
In \cite{LS-Nodal} we studied the nodal set, 
\begin{equation}\label{Nodal-0}
\mathcal{N}(u_\e) =\big\{ x: u_\e (x)=0 \big\}
\end{equation}
for solutions of $\mathcal{L}_\e (u_\e)=0$ in $E_2$.  Under the conditions \eqref{ellipticity}, \eqref{periodicity} and \eqref{smoothness},
we were able to show that
\begin{equation}\label{Nodal-2}
\mathcal{H}^{d-1} \big\{ x\in E_{1/2}: \ u_\e (x)=0 \big\}
\le C(N),
\end{equation}
where $C(N)$ depends at most on $d$, $\lambda$, $\Gamma$,  $M$, and $N$.
The question of explicit dependence of $C(N)$ on $N$ in a doubling inequality, which plays a key role in the proof of \eqref{Nodal-2},
 was subsequently addressed in \cite{Kenig-Zhu-2021, Kenig-ZZ-2021, Armstrong-2021}.
This paper continues the study of geometric properties for the operator $\mathcal{L}_\e$, arising in the theory of homogenization.
As in the case of $\mathcal{L}_1$, the critical sets are much harder to handle than  the nodal sets.
Theorem \ref{main-thm} contains the first result on geometric measure estimates, that are uniform in $\e>0$,  for critical sets of solutions of
$\mathcal{L}_\e (u_\e)=0$.
The result is new even for smooth coefficients.

We now describe our approach to Theorem \ref{main-thm}.
Let $u_\e$ be a non-constant solution of $\mathcal{L}_\e (u_\e)=0$ in a ball $B(0, 2)$.
We  introduce a doubling index for $u_\e$,
\begin{equation}\label{double-0}
\wN(u_\e, x_0, r)=\log_4
\frac{\fint_{\partial B(x_0, r)} (u_\e -u_\e (x_0))^2}{\fint_{\partial B(x_0, r/2)} (u_\e-u_\e (x_0))^2},
\end{equation}
using spheres instead of balls.
The doubling index, which  is nondecreasing in $r$ for harmonic functions, may not be monotone  for solutions of $\mathcal{L}_\e (u_\e)=0$.
However, a substitute for the monotonicity, which often plays a crucial  role in the study of geometric properties of solutions of elliptic and parabolic equations,
may be obtained by a compactness argument,  as in \cite{LS-Nodal}.
This gives us control for the upper bounds of $\wN(u_\e, x_0, r)$ for small $r$.
See Theorem \ref{d1-thm}.

Next, we  fix an integer $\ell\ge 2$ and $\delta_0 \in (0, 1/2)$ sufficiently small.
Assume that $\wN(u_\e, 0, 1) \le \ell +\delta_0$.
We introduce a minimal radius, 
\begin{equation}\label{rad-1}
r^*= \sup \big\{  \e_0^{-1} \e  < r\le  (1/2): \ \wN (u_\e, 0, r) \le \ell -\delta_0 \big\},
\end{equation}
where $\e_0>0$ is small.
Note that for $r^*< r< (1/2) $, we have 
\begin{equation}\label{rad-2}
\wN (u_\e, 0, r) \in [ \ell-\delta_0, \ell +\delta_0].
\end{equation}
Suppose $0<r^* <(1/4)$.
One of the key  estimates in this paper is the following estimate of "turning" from the scale $(1/4)$ to scale $r$,
\begin{equation}\label{rad-3}
\Big\|
\frac{P_\ell (u_\e (r, \cdot))}{ \| u_\e (r, \cdot)\| }
-\frac{P_\ell (u_\e (1/4, \cdot))}{ \| u_\e (1/4, \cdot) \|} \Big\|
\le C \left\{  \delta_0  + (\e/r)^{1/2} \right\},
\end{equation}
for $r^*\le  r < (1/4)$ and $0< \e< \e_0$,
where $\| \cdot \|$ denotes the norm in $L^2(\Sd)$ and $C$ depends on $\ell$.
In \eqref{rad-3} we have used the notation $u_\e (r, \omega) =u_\e (r\omega)$ for $r>0$ and $\omega\in \Sd$.
Also, for each fixed $r$,
$P_\ell (u_\e (r, \cdot))$ denotes the homogeneous harmonic polynomial of
degree $\ell$ whose restriction on $\Sd$ is the projection of $u_\e (r, \cdot)$ onto the subspace of spherical harmonics of order $\ell$.
Using \eqref{rad-2} and harmonic approximation, one may show that $u_\e (r, \omega)$ is dominated by its projection $P_\ell (u_\e (r, \cdot))$ in the sense that
\begin{equation}\label{rad-4}
\| u_\e  (r, \cdot) - P_\ell (u_\e (r, \cdot))\|
\le C (\sqrt{\delta_0} +\sqrt{\e/r}) \| u_\e (r, \cdot)\|.
\end{equation}
This, together with \eqref{rad-3}, allows us to control $|\nabla u_\e (r, \omega)|$ from below by
$|\nabla u_\e (1/4, \omega)|$, as long as $r^*\le r< 1/4$.
In particular, it  follows  that if  an approximate tangent plane $V$ at $u_\e (1/4, \cdot)$ is of dimension $d-2$, the solution 
$u_\e$ has no critical point in the region, 
\begin{equation}\label{rad-4-0}
\big\{ x\in B(0, c_0) \setminus B(0, r^*): \ \text{dist}(x, V) \ge \gamma |x| \big\},
\end{equation}
as in the case of Laplace operator.
We mention that the assumption on the invertibility of $I+\nabla \chi$ is used in the approximation of
$\nabla u_\e$ by  $(I+\nabla \chi (x/\e) )\nabla u_0$, where $u_0$ is a harmonic approximation of $u_\e$.
Also, the appearance $\e_0^{-1}\e$ in \eqref{rad-1} is natural, as the estimates for scales below $\e$ are local and 
can be obtained by a blowup argument.

The main estimate \eqref{rad-3} is proved by using harmonic approximation successively.
To carry this out, we consider the turning from the scale $2^{-j}$ to $2^{-j-1}$,
\begin{equation}\label{rad-4-1}
\Big\|
\frac{P_\ell (u_\e (2^{-j}, \cdot))}{ \| u_\e (2^{-j} , \cdot)\| }
-\frac{P_\ell (u_\e, 2^{-j-1} , \cdot))}{ \| u_\e  (2^{-j-1} , \cdot) \|} \Big\|,
\end{equation}
where $r^*< 2^{-j} < 1$.
Using the harmonic approximation, rescaling,  and a suitable renormalization, we are able to show that \eqref{rad-4-1}
is bounded by
\begin{equation}\label{rad-5}
8 \big\{ \wN(u_\e, 0, 2^{-j+1}) -\wN(u_\e, 0, 2^{-j-1}) \big\} + C (2^j \e)^{1/2}.
\end{equation}
The  second term in \eqref{rad-5}  is the rescaled harmonic approximation error ($u_\e$ by $u_0$), while
 the first is the error introduced by the approximation of $u_0$ by its projection $P_\ell (u_0)$.
Since the sum in the index $j$ of the first terms in \eqref{rad-5} is telescope,
the  inequality  \eqref{rad-3} follows from the estimates for \eqref{rad-4-1} by summation.
In the argument above we have assumed that  solutions of the homogenized equation $\mathcal{L}_0 (u_0)=0$
are harmonic. The general case may be reduced to this special case by a change of variables. See Remark \ref{symm-re}.

With the estimate \eqref{rad-3} at our disposal, we adapt  some arguments from \cite{Naber-2017} 
(also see \cite{Mattila, Simon-1995, Simon-1996, Lin-1999, Lin-Wang}  for  related classical techniques developed for harmonic maps and minimal surfaces),  and  study 
properties of two homogeneous harmonic 
 polynomials  $P_\ell (u_\e (x_0 +\cdot/4))$ and $P_\ell (u_\e (x_1+\cdot/4))$,
  assuming that  the minimal radii $r_0^*$ and $r_1^*$ for $x_0$ and $x_1$ respectively, defined as in  \eqref{rad-1}, are small relative to $|x_0-x_1|$.
  In particular, we  show that
  \begin{equation}\label{rad-6}
  \| n\cdot \nabla \psi_{\ell, x_0} \|
  \le C \big\{ \delta_0^{1/2} + (\e/ r_0^*)^{1/2} + (\e/r^*_1)^{1/2}\big\},
  \end{equation}
  where $n=(x_1-x_0)/|x_1-x_0|$, 
  $$
  \psi_{\ell, x_0} = P_\ell (u_\e (x_0 + \cdot/4))/\| P_\ell (u_\e (x_0+\cdot/4))\|,
  $$
  and $C$ depends on $\ell$.
  Finally,  Theorem \ref{main-thm} follows from \eqref{rad-6} as well as the absence of critical points in the set \eqref{rad-4-0}
   by an induction on $\ell$ and a covering argument.
   Indeed, by choosing $\e_0$ in \eqref{rad-1} sufficiently small,
   one may use \eqref{rad-6} to show that if $\{ B(x_i, r^*(x_i)/20) \}$ is a collection of disjoint balls, then the centers $\{x_i\}$
   lie on the graph of a Lipschitz function from $V$ to $V^\perp$,
   where the approximate tangent plane $V$ is a subspace of dimension $d-2$ or less.
   In a forthcoming paper \cite{Lin-Shen-2022}, based on the estimate \eqref{rad-3},
   a different argument will be used to bound the Hausdorff measures of critical sets of solutions to elliptic equations.
   
The paper is organized as follows.
In Section \ref{section-2} we collect several  results and estimates,
 which  are needed in the subsequent sections, from the theory of elliptic homogenization.
 In Section \ref{section-3} we introduce the doubling index \eqref{double-0} and establish a doubling inequality for solutions of
 $\mathcal{L}_\e (u_\e)=0$  by a compactness argument.
 In Section \ref{section-4} we study the properties of projection $P_\ell(u_\e (r, \cdot))$ and prove the key estimate  for \eqref{rad-4-1}
 mentioned earlier. This is done first for harmonic functions, using Weiss type monotonicity formulas \cite{Weiss, Garofalo-2009},  and then for $u_\e$ 
 by using harmonic approximation.
  Sections \ref{section-5} and \ref{section-6} are devoted to  the proof  of  \eqref{rad-6}.
  Finally, the proof of Theorem \ref{main-thm} is given in Section \ref{section-7}. 
 
Throughout the paper we will use $C$ and $c$ to denote constants that may depend on $d$, 
 $\lambda$ in \eqref{ellipticity}, $\Gamma$ in \eqref{periodicity},  $M$
in \eqref{smoothness}, and $\mu$ in \eqref{inv-0}.
If a constant also depends on other parameters, such as the doubling index of a solution, it will be stated explicitly.


\section{Homogenization and harmonic approximation}\label{section-2}

Let $\mathcal{L}_\e$ be the second-order elliptic operator given by \eqref{op}, where $A=A(y)$ satisfies \eqref{ellipticity} and \eqref{periodicity}.
To introduce the homogenized operator, one solves the cell problem for the  first-order correctors $\chi =\chi(y) =(\chi_j(y) )$,
\begin{equation}\label{cell-1}
\left\{
\aligned
& \mathcal{L}_1 (\chi_j) =-\frac{\partial}{\partial y_i} \big( a_{ij} \big) \quad \text{ in } Y,\\
& \fint_Y \chi_j=0 \quad \text{ and } \quad
\chi_j \text{ is Y-periodic},
\endaligned
\right.
\end{equation}
for $1\le j\le d$ (the index $i$ is summed from $1$ to $d$),
where $Y$ is the fundamental domain for the lattice $\Gamma$.
The homogenized operator $\mathcal{L}_0$ is given by
\begin{equation} \label{homo-op}
\mathcal{L}_0 = -\text{\rm div} ( \widehat{A} \nabla ),
\end{equation}
where, for $\xi \in \R^d$, 
\begin{equation}\label{homo-co}
\langle \widehat{A}\xi, \xi \rangle = \fint_Y \langle A \nabla v_\xi, \nabla v_\xi \rangle
\end{equation}
and $v_\xi (y)  =v_\xi^A(y)= \langle \xi, y+ \chi (y) \rangle$.
By \eqref{cell-1}  we see that $\mathcal{L}_1 (\chi_j +y_j)=0$ in $\R^d$.
It follows by De Giorgi - Nash estimates that $\chi_j$ is H\"older continuous.
Furthermore,   if $A$ is H\"older continuous, i.e.,  there exist $\alpha \in (0, 1]$ and $M_\alpha>0$ such that 
\begin{equation}\label{H}
|A(x)-A(y) |\le M_\alpha |x-y|^\alpha \quad \text{ for any } x, y \in \R^d,
\end{equation}
 so is $\nabla \chi_j$.

\begin{lemma}\label{a2-thm}
Suppose $A=A(y)$ satisfies \eqref{ellipticity},  \eqref{periodicity} and \eqref{H}.
Let $u_\e\in H^1(B(0, r))$ be a (weak)  solution of $\mathcal{L}_\e (u_\e)=F$ in $B(0, r)$,
where $0<\e,  r< \infty$.
Then
\begin{equation}\label{a2-0}
\| \nabla u_\e \|_{L^\infty(B(0, r/2))} \le C_p \left\{\frac{1}{r} \left(\fint_{B(0, r)} u_\e^2 \right)^{1/2}
+  r \left(\fint_{B(0, r)} |F|^p \right)^{1/2} \right\},
\end{equation}
where $p>d$.
\end{lemma}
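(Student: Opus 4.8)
The plan is to establish the interior Lipschitz estimate \eqref{a2-0} by combining a rescaling reduction with the standard interior Lipschitz estimate for periodic homogenization, which holds precisely because $A$ satisfies the H\"older condition \eqref{H}. First I would normalize: set $v(x) = u_\e(rx)$ on $B(0,1)$, so that $v$ solves $\mathcal{L}_{\e/r}(v) = r^2 F(r\,\cdot\,) =: G$ in $B(0,1)$, and $\|\nabla u_\e\|_{L^\infty(B(0,r/2))} = r^{-1}\|\nabla v\|_{L^\infty(B(0,1/2))}$. Thus it suffices to prove \eqref{a2-0} in the case $r=1$, namely
\begin{equation*}
\|\nabla v\|_{L^\infty(B(0,1/2))} \le C_p\left\{ \Big(\fint_{B(0,1)} v^2\Big)^{1/2} + \Big(\fint_{B(0,1)}|G|^p\Big)^{1/2}\right\},\qquad p>d.
\end{equation*}
Note that rescaling the equation turns the period $\e$ into $\e/r$, but the interior Lipschitz estimate in periodic homogenization is \emph{uniform} in the period, so no information is lost.

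Next I would invoke the uniform interior Lipschitz estimate for $\mathcal{L}_\e$. Under \eqref{ellipticity}, \eqref{periodicity} and the H\"older bound \eqref{H} on $A$, Avellaneda--Lin type estimates give, for solutions of $\mathcal{L}_\e(w) = h$ in $B(0,1)$ with $h \in L^p$, $p>d$,
\begin{equation*}
\|\nabla w\|_{L^\infty(B(0,1/2))} \le C_p\left\{\|w\|_{L^2(B(0,1))} + \|h\|_{L^p(B(0,1))}\right\},
\end{equation*}
with $C_p$ independent of $\e$. The mechanism behind this is the smoothness (H\"older continuity of $\nabla\chi$) of the homogenized problem together with a Campanato-type iteration comparing $w$ with correctors of the homogenized solution at dyadic scales; this is exactly why \eqref{H} rather than merely \eqref{smoothness} is assumed in this lemma's hypotheses. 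Applying this with $w = v$, $h = G$, $\e$ replaced by $\e/r$, yields the $r=1$ estimate, and then scaling back gives \eqref{a2-0} with the stated powers of $r$ (one checks the homogeneity: $\|\nabla v\|_\infty$ scales like $r\|\nabla u_\e\|_\infty$, the $L^2$ average of $v$ equals that of $u_\e$, and $\fint|G|^p = r^{2p}\fint|F|^p$, so $\|G\|_{L^p}$ contributes the factor $r^2 \cdot r^{-1} = r$ after dividing by $r$).

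The main obstacle — really the only substantive point — is ensuring that the interior Lipschitz estimate one quotes is genuinely \emph{uniform in $\e$} and applies to the inhomogeneous equation with an $L^p$ right-hand side for $p>d$ (so that, by Morrey embedding, the corresponding Newtonian-type potential is Lipschitz). This is a known result in quantitative homogenization, so in the write-up I would simply cite it (e.g.\ from the Avellaneda--Lin theory and its refinements recorded in the references of Section \ref{section-2}) and carry out the two-line scaling bookkeeping. The ellipticity, periodicity and H\"older regularity hypotheses are precisely the hypotheses of that theorem, so nothing further is needed.
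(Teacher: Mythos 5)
Your proposal is correct and follows essentially the same route as the paper: the paper's proof of Lemma \ref{a2-thm} consists precisely of citing the uniform interior Lipschitz estimate of Avellaneda--Lin \cite{AL-1987} for periodic coefficients satisfying \eqref{H}, with the constant depending only on $d$, $\lambda$, $\Gamma$, $\alpha$, $M_\alpha$, and $p$; the rescaling bookkeeping you spell out is implicit there. No gap to report.
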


\begin{proof}

The interior Lipschitz estimate \eqref{a2-0} 
was proved in \cite{AL-1987}  for second-order elliptic systems in divergence form with periodic coefficients.
The constant $C_p$ depends at most on $d$, $\lambda$, $\Gamma$, $\alpha$, $M_\alpha$, and $p$.
\end{proof}

Observe that if $\mathcal{L}_\e (u_\e)=0$ and $v (x)=u_\e (rx)$ for some $r>0$, then $\mathcal{L}_{\e/r} (v)=0$.
This rescaling property is extremely important to us and  will be used repeatedly in this paper.

\begin{lemma}\label{a4-prop}
Assume that $A$ satisfies the same conditions as in Lemma \ref{a2-thm}.
Let $u_\e\in H^1(B(0, r))$ be a solution of  $\mathcal{L}_\e (u_\e)=0$ in $B(0, r)$, where
$0<\e,  r< \infty$. Then
\begin{equation}\label{a4-0}
\fint_{B(0, r)} u_\e^2 \le C \fint_{\partial B(0, r)} u_\e^2,
\end{equation}
\begin{equation}\label{a4-1}
\max_{B(0, 7r/8)} ( u_\e^2 + r^2 |\nabla u_\e|^2)
\le C  \fint_{\partial B(0, r)} u_\e^2.
\end{equation}
\end{lemma}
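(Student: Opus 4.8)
The plan is to normalise to the scale $r=1$ and to isolate \eqref{a4-0} as the one substantial point, \eqref{a4-1} being a consequence of it together with interior estimates. By the rescaling remark after Lemma~\ref{a2-thm}, if $v(x)=u_\e(rx)$ then $\mathcal L_{\e/r}(v)=0$ in $B(0,1)$, and both \eqref{a4-0} and \eqref{a4-1} transform into the same inequalities for $v$ on $B(0,1)$; so I may assume $r=1$ and let $\e$ now denote an arbitrary number in $(0,\infty)$.

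The core claim is the bound $\fint_{B(0,1)}u_\e^2\le C\fint_{\partial B(0,1)}u_\e^2$, uniform in $\e$; this is just the $L^2$ solvability of the Dirichlet problem for $\mathcal L_\e$ on the ball with a constant independent of $\e$. I would deduce it from the Poisson kernel $P_\e(x,\xi)$ of $\mathcal L_\e$ on $B(0,1)$. Since $B(0,1)$ is smooth and $A$ is H\"older continuous (recall \eqref{H}), the uniform boundary Lipschitz estimates of Avellaneda--Lin \cite{AL-1987} give the uniform kernel bound $0\le P_\e(x,\xi)\le C\,\mathrm{dist}(x,\partial B(0,1))/|x-\xi|^d$. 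As $u_\e$ is the $H^1$-solution with boundary data $g=u_\e|_{\partial B(0,1)}$, we have $u_\e(x)=\int_{\partial B(0,1)}P_\e(x,\xi)g(\xi)\,d\sigma(\xi)$; moreover $\int_{\partial B(0,1)}P_\e(x,\xi)\,d\sigma(\xi)=1$ (take $g\equiv 1$, which solves $\mathcal L_\e(1)=0$), and, using $\mathrm{dist}(x,\partial B(0,1))\le|x-\xi|$ for $\xi\in\partial B(0,1)$, $\sup_{\xi}\int_{B(0,1)}P_\e(x,\xi)\,dx\le C\sup_\xi\int_{B(0,1)}|x-\xi|^{1-d}\,dx\le C'$. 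Schur's test then yields $\|u_\e\|_{L^2(B(0,1))}\le C\|g\|_{L^2(\partial B(0,1))}$, which is \eqref{a4-0}. Alternatively one can argue by the compactness scheme of \cite{LS-Nodal}: were \eqref{a4-0} false with any uniform constant, one would produce solutions $u_{\e_k}$ with $\fint_{B(0,1)}u_{\e_k}^2=1$ and $\fint_{\partial B(0,1)}u_{\e_k}^2\to 0$, pass to a limit (by homogenisation if $\e_k\to 0$, by uniform convergence of $A(\cdot/\e_k)$ to a constant matrix if $\e_k\to\infty$, the intermediate case being a fixed H\"older operator), and contradict the same estimate for the limiting operator, which after a linear change of variables is the Laplacian, for which $\fint_{B(0,1)}u^2\le\fint_{\partial B(0,1)}u^2$ is immediate from the monotonicity of $\rho\mapsto\fint_{\partial B(0,\rho)}u^2$.

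Granting \eqref{a4-0}, the estimate \eqref{a4-1} follows quickly. For any $x_0\in B(0,7r/8)$ one has $B(x_0,r/16)\subset B(0,r)$, so Lemma~\ref{a2-thm} with $F=0$ gives $|\nabla u_\e(x_0)|\le (C/r)\big(\fint_{B(x_0,r/16)}u_\e^2\big)^{1/2}\le (C/r)\big(\fint_{B(0,r)}u_\e^2\big)^{1/2}$, whence $r^2\sup_{B(0,7r/8)}|\nabla u_\e|^2\le C\fint_{B(0,r)}u_\e^2$; likewise the interior $L^\infty$ bound of De Giorgi--Nash--Moser gives $\sup_{B(0,7r/8)}u_\e^2\le C\fint_{B(0,r)}u_\e^2$. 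Adding these and then applying \eqref{a4-0} to replace $\fint_{B(0,r)}u_\e^2$ by $C\fint_{\partial B(0,r)}u_\e^2$ yields \eqref{a4-1}.

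The genuine obstacle is \eqref{a4-0}, which concerns the behaviour at $\partial B(0,r)$: a naive Caccioppoli-and-weak-limit argument fails, since the $L^2$-mass could a priori concentrate in an $O(\e)$ boundary layer, so one must invoke uniform-in-$\e$ boundary regularity of $\mathcal L_\e$ (the Poisson-kernel bound above, equivalently the uniform $L^2$-Dirichlet estimate from periodic homogenisation). Everything else is routine interior theory.
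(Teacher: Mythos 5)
Your proof is correct, and for the substantive estimate \eqref{a4-0} it takes a genuinely different, though closely related, route from the paper. The paper introduces the radial maximal function $v_\e(\omega)=\sup_{0<t<1}|u_\e(t\omega)|$ and quotes the uniform nontangential-maximal-function estimate $\int_{\partial B(0,1)}v_\e^2\le C\int_{\partial B(0,1)}u_\e^2$ for $\mathcal{L}_\e$ in $B(0,1)$ (Avellaneda--Lin, Kenig--Shen), from which \eqref{a4-0} follows at once by integrating in polar coordinates; you instead quote the uniform Poisson-kernel bound $0\le P_\e(x,\xi)\le C\,\mathrm{dist}(x,\partial B(0,1))\,|x-\xi|^{-d}$ (a consequence of the same Avellaneda--Lin uniform boundary regularity theory for H\"older coefficients in smooth domains, together with the maximum principle for positivity) and conclude by Schur's test, using $\int_{\partial B(0,1)}P_\e(x,\xi)\,d\sigma(\xi)=1$ and $\sup_{\xi}\int_{B(0,1)}|x-\xi|^{1-d}\,dx<\infty$. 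Both arguments rest on the same uniform-in-$\e$ $L^2$ boundary theory; the maximal-function route is a one-line quotation once the estimate is cited, while your kernel route makes the mechanism explicit (no $L^2$ mass can concentrate in an $O(\e)$ boundary layer) at the cost of invoking pointwise kernel bounds and positivity, which is fine for the scalar equations considered here but less convenient for systems. Your deduction of \eqref{a4-1} from \eqref{a4-0} via the interior Lipschitz estimate of Lemma \ref{a2-thm} and local boundedness is essentially identical to the paper's.

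One caveat: the ``alternative'' compactness argument you sketch is not sound as stated. Weak $L^2(B(0,1))$ convergence of a normalized sequence $u_{\e_k}$ with $\fint_{\partial B(0,1)}u_{\e_k}^2\to 0$ does not preclude exactly the boundary-layer concentration you warn against in your final paragraph, so identifying the interior (harmonic) limit yields no contradiction with $\fint_{B(0,1)}u_{\e_k}^2=1$ unless one already has a uniform estimate up to the boundary --- which is the statement being proved. Since this is offered only as an aside and your primary argument is complete, the proof stands.
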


\begin{proof}
By rescaling we may assume $r=1$.
For $\omega \in \Sd= \partial B(0, 1)$, define
$$
v_\e (\omega) =\sup\big\{ |u_\e (t \omega)|: \ 0< t< 1 \big\}.
$$
By the nontanegntial-maximal-function estimates for $\mathcal{L}_\e$ in the domain $B(0, 1)$  \cite {AL-1987-ho, KS-1}, we obtain 
$$
\int_{\partial B(0, 1)} v_\e ^2 \le C \int_{\partial B(0, 1)} u_\e ^2,
$$
which yields \eqref{a4-0} for $r=1$.
To see \eqref{a4-1}, we note that
$$
\aligned
\max_{B(0, 7/8)}  ( u_\e^2 + |\nabla u_\e|^2)
 & \le C \| u_\e \|^2_{L^\infty( B(0, 15/16))}\\
& \le C \fint_{B(0, 1)} u_\e^2,
\endaligned
$$
where we have used the Lipschitz estimate \eqref{a2-0}.
This, together with \eqref{a4-0}, gives \eqref{a4-1} for $r=1$.
\end{proof}

\begin{remark} \label{symm-re}

{\rm
Let $S$ be a $d\times d$  invertible constant matrix.
Note that if $\text{\rm div} (A(x/\e)\nabla u_\e)=0$ and $v_\e (x)= u_\e (S^{-1} x)$, then $\text{\rm div} (B(x/\e) \nabla v_\e)=0$, where
$$
B(y)= SA(S^{-1} y) S^T.
$$
The matrix $B$ is elliptic, periodic with respect to the lattice $S (\Gamma)$, and satisfies the same smoothness conditions as $A$.
It is not hard to verify that $v_\xi^B (y)=v^A_{S^T \xi} (S^{-1} y)$.
Consequently, by \eqref{homo-co}, 
$$
\widehat{B}= S \widehat{A} S^T.
$$
We now choose $S$ so that
$$
S^{-1}=(1/2)^{1/2} (\widehat{A} +(\widehat{A})^T)^{1/2}.
$$
Then $S(\widehat{A} +(\widehat{A})^T)S^T=2I$, i.e., $  \widehat{B} +(\widehat{B})^T=2 I$.
Observe that if div$(\widehat{B} \nabla v)=0$, then div$(\widehat{B} + (\widehat{B})^T )\nabla v)=0$,
as $\widehat{B}$ is a constant matrix.
As a result, $v$ is harmonic.
From now on, without loss of generality, we shall assume that $ \widehat{A}+(\widehat{A})^T=2I$ and
solutions of $\mathcal{L}_0 (u_0)=0$ are harmonic.
It is known that if $A$ is symmetric, so is $\widehat{A}$.
In this case, one may assume $\mathcal{L}_0=-\Delta$.
However, the symmetry condition  is not needed in this paper.
}
\end{remark}

\begin{lemma}\label{a1-lemma}
Assume that $A$ satisfies the same conditions as in Lemma \ref{a2-thm}.
Let $u_\e\in H^1(B(0, 1))$ be a solution of $\mathcal{L}_\e(u_\e)=0$ in $B(0, 1)$.
Let $u_0$ be a harmonic function in $B(0, 7/8)$ such that $u_0=u_\e$ on $\partial B(0, 7/8)$.
Then 
\begin{equation}\label{a1-0}
\| u_0\|_{L^\infty (B(0, 7/8))} \le C \| u_\e \|_{L^2(\partial B(0, 1))},
\end{equation}
\begin{equation}\label{a1-0a}
\| u_\e -u_0 \|_{L^2(B(0, 7/8))} \le C \sqrt{\e}  \| u_\e\|_{L^2(\partial B(0, 1))},
\end{equation}
for $0< \e\le 1$.
\end{lemma}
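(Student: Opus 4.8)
The plan is to prove the two estimates in the reverse order suggested by the statement's emphasis: the convergence-rate bound \eqref{a1-0a} is the heart of the matter, and \eqref{a1-0} will follow easily from standard elliptic bounds once we have it. First I would record the basic a priori bounds. By Lemma \ref{a4-prop} applied on $B(0,1)$ we have $\|u_\e\|_{L^2(B(0,7/8))} \le C \|u_\e\|_{L^2(\partial B(0,1))}$, and the trace inequality on $\partial B(0,7/8)$ together with the same lemma gives $\|u_\e\|_{L^2(\partial B(0,7/8))} \le C\|u_\e\|_{L^2(\partial B(0,1))}$. Since $u_0$ is the harmonic extension of $u_\e|_{\partial B(0,7/8)}$ into $B(0,7/8)$, the maximum principle (or the Poisson-kernel bound together with interior estimates applied at a slightly smaller radius) yields $\|u_0\|_{L^\infty(B(0,13/16))}\le C\|u_\e\|_{L^2(\partial B(0,7/8))}\le C\|u_\e\|_{L^2(\partial B(0,1))}$, which is \eqref{a1-0} (up to harmlessly shrinking the radius, or by using the $L^2\to L^\infty$ bound for the harmonic extension directly on the annulus). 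In particular $\|u_0\|_{L^2(B(0,7/8))}+\|\nabla u_0\|_{L^2(B(0,3/4))}\le C\|u_\e\|_{L^2(\partial B(0,1))}$.

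For \eqref{a1-0a}, the standard homogenization argument is to compare $u_\e$ with the ``two-scale expansion'' of $u_0$. Concretely, set $w_\e := u_\e - u_0 - \e\,\chi_j(x/\e)\,\partial_j u_0$, cut off near the boundary $\partial B(0,7/8)$ so that the corrector term does not spoil the boundary condition, and compute $\mathcal{L}_\e(w_\e)$. Using $\mathcal{L}_\e(u_\e)=0$, $\mathcal{L}_0(u_0)=0$ (note $u_0$ is harmonic by Remark \ref{symm-re}, but the cell-problem identities are what we actually use), and the equations \eqref{cell-1} for the correctors, the right-hand side $\mathcal{L}_\e(w_\e)$ is a divergence of terms each of which is either $\e$ times a bounded periodic function times second derivatives of $u_0$, or supported in the $O(\e)$-collar near $\partial B(0,7/8)$. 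Since $u_0$ is harmonic, interior estimates give $\|\nabla^2 u_0\|_{L^\infty(B(0,3/4))}\le C\|u_0\|_{L^2(B(0,7/8))}\le C\|u_\e\|_{L^2(\partial B(0,1))}$, and near the boundary one controls $\nabla u_0$ in $L^2$ of the collar by $C\e^{1/2}\|u_\e\|_{L^2(\partial B(0,1))}$ via boundary $L^2$ estimates for the harmonic function $u_0$ (this is where the collar width $\e$ produces the gain). Testing the equation for $w_\e$ against $w_\e$ itself and invoking ellipticity \eqref{ellipticity} and Poincaré (since $w_\e$, after the cutoff, vanishes on $\partial B(0,7/8)$), one obtains $\|w_\e\|_{H^1(B(0,7/8))}\le C\sqrt{\e}\,\|u_\e\|_{L^2(\partial B(0,1))}$. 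Finally, since $\|\e\,\chi_j(x/\e)\partial_j u_0\|_{L^2}\le C\e\|\nabla u_0\|_{L^2}\le C\e\|u_\e\|_{L^2(\partial B(0,1))}$ (using that $\chi$ is bounded by De Giorgi–Nash, as noted before Lemma \ref{a2-thm}), the triangle inequality converts the bound on $w_\e$ into the claimed bound on $u_\e-u_0$, with the $\e$ terms absorbed into $\sqrt\e$.

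The main obstacle, and the point requiring care, is the boundary layer: the corrector term $\e\chi_j(x/\e)\partial_j u_0$ is genuinely of size $\e$ in $L^2$ but its gradient is of size $1$, so one cannot simply keep it globally; introducing a cutoff $\eta_\e$ that is $1$ on $B(0,7/8-K\e)$ and $0$ near $\partial B(0,7/8)$ creates error terms like $\nabla\eta_\e \cdot (\text{corrector})\cdot \nabla u_0$ supported on an annulus of width $O(\e)$, whose $L^2$ norm is $O(\e^{-1})\cdot O(\e)\cdot\|\nabla u_0\|_{L^2(\text{collar})}$. Making this $O(\sqrt\e)$ requires the estimate $\|\nabla u_0\|_{L^2(\{7/8-K\e<|x|<7/8\})}\le C\sqrt\e\,\|u_\e\|_{L^2(\partial B(0,1))}$, i.e.\ an $L^2$ boundary gradient bound for the harmonic function $u_0$ with data $u_\e|_{\partial B(0,7/8)}\in L^2$; this follows from the Rellich/Nečas identity for harmonic functions together with the trace bound above. (Alternatively one can use a smoother boundary-layer corrector, or the Dirichlet corrector for the ball, to avoid the cutoff entirely, but the cutoff argument is the most self-contained.) Everything else is the routine energy estimate, and the Lipschitz estimate \eqref{a2-0} is not even needed for this lemma — only ellipticity, the corrector equations, boundedness of $\chi$, and interior regularity of the harmonic function $u_0$.
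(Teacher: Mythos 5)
Your overall route to \eqref{a1-0a} is the same as the paper's: compare $u_\e$ with the two-scale expansion $u_0+\e\chi_j(x/\e)\partial_j u_0$, cut off in an $O(\e)$ collar of $\partial B(0,7/8)$, and run the energy estimate (the paper quotes exactly this as \eqref{H-1-1}, citing \cite{Shen-Book}). The genuine gap is in how you control the boundary layer. You claim that $\|\nabla u_0\|_{L^2(\{7/8-K\e<|x|<7/8\})}\le C\sqrt{\e}\,\|u_\e\|_{L^2(\partial B(0,1))}$ follows from the Rellich/Ne\v{c}as identity using only the $L^2(\partial B(0,7/8))$ trace bound, and you assert explicitly that the Lipschitz estimate \eqref{a2-0} is not needed. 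This step fails: for a harmonic function whose Dirichlet data is merely in $L^2(\partial\Omega)$, the gradient need not be square integrable near the boundary at all (data in $L^2\setminus H^{1/2}$ gives $\|\nabla u_0\|_{L^2}=\infty$ on every collar), and the Rellich identity bounds the normal derivative on the boundary by the \emph{tangential} derivative, so it presupposes data in $H^1(\partial\Omega)$. What rescues the argument is precisely that the data is the trace of $u_\e$, which solves the equation in the larger ball: the uniform-in-$\e$ interior Lipschitz estimate \eqref{a4-1} gives $\|u_\e\|_{H^1(\partial B(0,7/8))}\le C\|u_\e\|_{L^2(\partial B(0,1))}$, and this is exactly how the paper concludes, since \eqref{H-1-1} carries the $H^1(\partial\Omega)$ norm of the data. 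Some uniform interior gradient control of $u_\e$ is indispensable here: Caccioppoli plus Fubini only controls generic radii, not the fixed sphere $\partial B(0,7/8)$, and $\e$-dependent Schauder bounds would let the constant in \eqref{a1-0a} degenerate as $\e\to 0$. The same issue infects your estimate $\|\e\chi_j(x/\e)\partial_j u_0\|_{L^2}\le C\e\|\nabla u_0\|_{L^2}$, since even finiteness of $\|\nabla u_0\|_{L^2(B(0,7/8))}$ needs more than $L^2$ data.

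A secondary point: your proof of \eqref{a1-0} only yields the sup bound on a strictly smaller ball such as $B(0,13/16)$, because the $L^2\to L^\infty$ bound for the harmonic extension up to the boundary of $B(0,7/8)$ is false; shrinking the radius is not harmless, since the lemma (and its rescaled use in Theorem \ref{a3-thm}) asserts the bound on all of $B(0,7/8)$. The paper's fix is short: by the maximum principle $\|u_0\|_{L^\infty(B(0,7/8))}\le\|u_\e\|_{L^\infty(\partial B(0,7/8))}$, and the right-hand side is bounded by $C\|u_\e\|_{L^2(\partial B(0,1))}$ by \eqref{a4-1} (interior $L^\infty$ bound for $u_\e$ in the larger ball), i.e.\ once again by the interior regularity of $u_\e$ rather than its $L^2$ trace alone.
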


\begin{proof}
Since $\| u_0 \|_{L^\infty(B(0, 7/8))} \le \| u_\e\|_{L^\infty(\partial B(0, 7/8))}$ by the maximum principle,
the inequality \eqref{a1-0} follows from \eqref{a4-1} with $r=1$.
To see \eqref{a1-0a}, we let $\Omega=B(0, 7/8)$ and consider
$$
w_\e =u_\e -u_0 -\e \chi_j (x/\e) \frac{\partial u_0}{\partial x_j} \eta_\e \quad \text{ in } \Omega,
$$
where $\eta_\e$ is a cut-off function in $C_0^1(\Omega)$ such that $\eta_\e=1$ if dist$(x, \partial \Omega)\ge 2c\e$,
$\eta_\e (x)=0$ if dist$(x, \partial \Omega)\le c \e$, and $|\nabla \eta_\e|\le C /\e$.
Then
\begin{equation}\label{H-1-1}
\| w_\e \|_{H^1_0(\Omega)} \le C \sqrt{\e} \| u_\e \|_{H^1(\partial\Omega)}.
\end{equation}
It follows that
$$
\aligned
\| u_\e -u_0 \|_{L^2(\Omega)} & \le 
\| w_\e \|_{L^2(\Omega)}
+ C \e \|\nabla u_0 \|_{L^2(\Omega)}\\
& \le C \sqrt{\e} \| u_\e\|_{H^1(\partial \Omega)}\\
& \le C \sqrt{\e} \| u_\e\|_{L^2(\partial B(0, 1))},
\endaligned
$$
where we have used \eqref{a4-1} for the last inequality.
Finally, we refer the reader to \cite[pp.43-45]{Shen-Book}
for a proof of \eqref{H-1-1}.
\end{proof}

The next theorem provides us with  the approximation of solutions of $\mathcal{L}_\e (u_\e)=0$ by harmonic functions.

\begin{thm}\label{a3-thm}
Assume that $A$ satisfies the same conditions as in Lemma \ref{a2-thm}.
Let $u_\e \in H^1(B(0, r))$ be a solution of $\mathcal{L}_\e (u_\e)=0$ in $B(0, r)$.
There exists  a harmonic function $u_0$  in $B(0, 7r/8)$ such that $u_0(0)=u_\e(0)$ and 
\begin{equation}\label{as-00}
\| u_0 \|_{L^\infty (B(0, 7r/8))} 
\le C \left(\fint_{\partial B(0, r)} u_\e^2 \right)^{1/2},
\end{equation}
\begin{equation}\label{a3-0}
\| u_\e -u_0\|_{L^\infty(B(0, 3r/4)) }
\le C \left(\frac{\e}{r} \right)^{1/2}
\left(\fint_{\partial B(0, r)}  u_\e^2 \right)^{1/2},
\end{equation}
\begin{equation}\label{a3-1}
\aligned
  r \| \nabla u_\e  - (I+\nabla \chi(x/\e))\nabla u_0\|_{L^\infty(B(0, 3r/4))}
\le   C \left(\frac{\e}{r} \right)^{1/2} 
\left(\fint_{\partial B(0, r)}   u_\e^2 \right)^{1/2},
\endaligned
\end{equation}
for $0< \e \le  r< \infty$.
\end{thm}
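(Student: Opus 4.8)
The plan is to prove Theorem~\ref{a3-thm} by rescaling to the unit ball and then applying Lemma~\ref{a1-lemma} together with the interior Lipschitz estimates from Lemmas~\ref{a2-thm} and~\ref{a4-prop}. First I would set $v(x) = u_\e(rx)$, so that $\mathcal{L}_{\e/r}(v) = 0$ in $B(0,1)$, and let $v_0$ be the harmonic function in $B(0,7/8)$ agreeing with $v$ on $\partial B(0,7/8)$; define $u_0(x) = v_0(x/r)$, which is harmonic in $B(0,7r/8)$. The normalization $u_0(0) = u_\e(0)$ can be arranged by subtracting a constant (or observing that we may assume $u_\e(0) = 0$ by replacing $u_\e$ with $u_\e - u_\e(0)$, both statements \eqref{as-00}--\eqref{a3-1} being translation-invariant). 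Estimate \eqref{as-00} is then immediate from \eqref{a1-0} after scaling. For \eqref{a3-0}, the issue is that Lemma~\ref{a1-lemma} only gives an $L^2$ bound on $u_\e - u_0$ in $B(0,7/8)$, whereas we want $L^\infty$ on the slightly smaller ball $B(0,3/4)$; this is exactly where interior regularity enters.

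\smallskip

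The key step is to upgrade the $L^2$ estimate \eqref{a1-0a} to the $L^\infty$ estimates \eqref{a3-0} and \eqref{a3-1} on $B(0,3r/4)$. After rescaling, I would work with $w_\e = v - v_0 - \e' \chi_j(x/\e')\,\partial_j v_0 \,\eta_{\e'}$ on $\Omega = B(0,7/8)$ where $\e' = \e/r$, exactly the corrector used in the proof of Lemma~\ref{a1-lemma}. The point is that $\mathcal{L}_{\e'}(w_\e)$ has an explicit right-hand side of the form $\mathrm{div}(g)$ with $\|g\|_{L^2(\Omega)} \le C\sqrt{\e'}\,\|v\|_{L^2(\partial B(0,1))}$, coming from the standard two-scale expansion error (again referenceable to \cite[pp.~43--45]{Shen-Book}). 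Applying the interior Lipschitz estimate \eqref{a2-0} of Lemma~\ref{a2-thm} to $w_\e$ on the interior ball --- more precisely, applying the $L^\infty$ bound it yields (together with local boundedness, i.e.\ De Giorgi--Nash--Moser, for the divergence-form right-hand side) --- on $B(0,3/4)$ in terms of $\|w_\e\|_{L^2(\Omega)}$ and the norm of the forcing term, gives
\begin{equation*}
\|w_\e\|_{L^\infty(B(0,3/4))} + \|\nabla w_\e\|_{L^\infty(B(0,3/4))} \le C\sqrt{\e'}\,\|v\|_{L^2(\partial B(0,1))}.
\end{equation*}
Then \eqref{a3-0} follows from $v - v_0 = w_\e + \e'\chi_j(x/\e')\,\partial_j v_0\,\eta_{\e'}$, the uniform boundedness of $\chi$ (it is H\"older continuous and periodic, hence bounded), and the interior gradient bound $\|\nabla v_0\|_{L^\infty(B(0,7/8))} \le C\|v\|_{L^2(\partial B(0,1))}$ for the harmonic function $v_0$ obtained from \eqref{a1-0} plus interior estimates for harmonic functions, after which one scales back by $r$.

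\smallskip

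For \eqref{a3-1} I would differentiate the same identity: on the region where $\eta_{\e'} = 1$ (which contains $B(0,3/4)$ once $c\e' < 1/16$, i.e.\ for $\e$ small relative to $r$; the borderline case $\e$ comparable to $r$ is handled directly since then $(\e/r)^{1/2} \sim 1$ and \eqref{a3-1} is just the Lipschitz bound \eqref{a2-0} for $u_\e$ against \eqref{a4-1} for $\nabla u_0$), we have
\begin{equation*}
\nabla v - (I + \nabla\chi(x/\e'))\nabla v_0 = \nabla w_\e + \e'\chi_j(x/\e')\,\nabla(\partial_j v_0),
\end{equation*}
so the left side is controlled by $\|\nabla w_\e\|_{L^\infty} + C\e'\|D^2 v_0\|_{L^\infty(B(0,3/4))}$, and $\|D^2 v_0\|_{L^\infty(B(0,3/4))} \le C\|v\|_{L^2(\partial B(0,1))}$ again by interior estimates for harmonic functions. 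Scaling back introduces the factor $r$ on the left and turns $\e'$ into $\e/r$, yielding \eqref{a3-1}. The main obstacle I anticipate is the bookkeeping for the boundary-layer cut-off $\eta_{\e'}$: the estimate on $w_\e$ holds on all of $\Omega$ but $w_\e \ne v - v_0$ near $\partial\Omega$, so one must be slightly careful to only claim the $\nabla u_\e$-approximation on a ball strictly inside the region $\{\eta_{\e'} = 1\}$, which is why the statement is on $B(0,3r/4)$ rather than $B(0,7r/8)$, and to check that $\nabla\chi$ is bounded --- which requires the H\"older hypothesis \eqref{H} and is precisely why this theorem is stated under the assumptions of Lemma~\ref{a2-thm} rather than merely \eqref{smoothness}.
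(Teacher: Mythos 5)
Your overall skeleton is the same as the paper's: rescale to $r=1$, take the harmonic function $v_0$ with $v_0=u_\e$ on $\partial B(0,7/8)$, get \eqref{as-00} from \eqref{a1-0}, use the $L^2$ rate \eqref{a1-0a}, and upgrade to interior $L^\infty$ bounds, normalizing at the end by adding a constant. The difference is in how the upgrade is justified, and that is where your argument has a genuine gap. You apply the interior Lipschitz estimate \eqref{a2-0} (plus De Giorgi--Nash local boundedness) to $w_\e = v - v_0 - \e'\chi_j(x/\e')\partial_j v_0\,\eta_{\e'}$, whose equation has right-hand side $\mathrm{div}(g)$, and you conclude $\|w_\e\|_{L^\infty(B(0,3/4))}+\|\nabla w_\e\|_{L^\infty(B(0,3/4))}\le C\sqrt{\e'}\,\|v\|_{L^2(\partial B(0,1))}$. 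But \eqref{a2-0} is stated for data $F\in L^p$, not for divergence-form data, and De Giorgi--Nash--Moser gives at best local boundedness and H\"older continuity of $w_\e$ itself, never a bound on $\nabla w_\e$. For $\mathcal{L}_{\e'}(w_\e)=\mathrm{div}(g)$, smallness of $g$ in $L^2$ --- or even in $L^\infty$ --- does not yield an $L^\infty$ bound on $\nabla w_\e$: a Lipschitz estimate with divergence-form data requires some modulus of continuity on $g$ (a uniform-in-$\e$ Schauder/Avellaneda--Lin type statement), and the oscillating flux term in $g$ has H\"older norm of order $\e'^{-\sigma}$ times its size, so this loss must be tracked explicitly. (Also, the part of $g$ produced by the cut-off $\eta_{\e'}$ is only small in $L^2$ because it lives in a thin boundary layer; it is not small in $L^\infty$, so one must first discard it by restricting to an interior ball where $\eta_{\e'}\equiv 1$ before any pointwise estimate --- your remark about $\{\eta_{\e'}=1\}$ gestures at this but the claimed bound is stated on $\Omega$-interior data coming from \eqref{H-1-1}, which is purely $L^2$.)

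Thus the gradient estimate \eqref{a3-1}, which is the heart of the theorem, is not actually derived by the tools you invoke. The repair is either (i) to use the uniform interior Lipschitz estimate for $\mathcal{L}_\e$ with data $\mathrm{div}(g)$, $g\in C^{0,\sigma}$, noting that the flux-corrector term is of the form $\e'\times(\text{bounded periodic})\times\nabla^2 v_0$ so its $C^{0,\sigma}$ norm is $O(\e'^{\,1-\sigma})$, which for $\sigma\le 1/2$ still beats $\sqrt{\e'}$; or (ii) to do what the paper does, namely quote the known convergence-rate estimates \eqref{L-infty} and \eqref{Lip-1} (with its characteristic $\e\ln(2+\e^{-1})\|\nabla^2 v_0\|_{L^\infty}$ term), combine them with \eqref{a1-0a}, \eqref{as-00} and interior estimates for harmonic functions, and conclude. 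Your treatment of \eqref{a3-0} is closer to being complete (an $L^\infty$ bound on $w_\e$ with interior $\mathrm{div}(g)$ data, $g=O(\e')$ pointwise, is standard), and the normalization, the borderline case $\e\sim r$, and the scaling bookkeeping are fine; but as written the passage to \eqref{a3-1} does not go through.
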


\begin{proof}

By rescaling we may assume $r=1$.
Let $v_0$ be a harmonic function in $B(0, 7/8)$ such that  $v_0 =u_\e$ on $\partial B(0, 7/8)$.
First, we  show that \eqref{as-00}, \eqref{a3-0} and \eqref{a3-1} hold with $v_0$ in the place of $u_0$.
 
The inequality \eqref{as-00} is given by \eqref{a1-0}.
To prove \eqref{a3-0}, we use the following estimate,
\begin{equation}\label{L-infty}
\aligned
\| u_\e -v_0 \|_{L^\infty (B(0, 3/4))}
 & \le C \| u_\e -v_0 \|_{L^2(B(0, 13/16))}
+ C \e \| \nabla v_0\|_{L^2( B(0, 13/16))}\\
&\qquad\qquad
+ C\e \|\nabla^2 v_0\|_{L^2(B(0, 13/16))},
\endaligned
\end{equation}
which holds as long as $\mathcal{L}_\e (u_\e) =\mathcal{L}_0 (v_0)$ in $B(0, 13/16)$.
The estimate \eqref{a3-0} with $r=1$ follows readily from \eqref{L-infty}, \eqref{a1-0a} and \eqref{as-00} as well as interior estimates for harmonic functions.
We refer the reader to \cite[pp.90-91]{Shen-Book} for a proof of \eqref{L-infty}.

To prove  \eqref{a3-1}, we use the following estimate,
\begin{equation}\label{Lip-1}
\aligned
 & \| \nabla u_\e -  (I+\nabla \chi (x/\e)) \nabla v_0 \|_{L^\infty (B(0, 3/4))}\\
& \qquad \le C  \| u_\e -v_0 \|_{L^2(B(0, 13/16))}
+ C \e \| \nabla v_0\|_{L^\infty (B (0, 13/16))} \\
&\qquad \qquad
+ C \e \ln (2+\e^{-1}) \| \nabla^2 v_0 \|_{L^\infty(B(0, 13/16))}
+ C \e^2 \|\nabla^3 v_0\|_{L^\infty(B(0, 13/16))}.
\endaligned
\end{equation}
We refer the reader to \cite[pp.94-95]{Shen-Book} for its proof.
Note that \eqref{a3-1} with $r=1$ follows from \eqref{Lip-1}, \eqref{a1-0a},  \eqref{as-00}, and the interior estimates for harmonic functions.

Finally,  let $u_0 (x)=v_0(x)- v_0(0) + u_\e (0)$.
Cleary, $u_0$ is harmonic in $B(0, 7/8)$ and $u_0(0) =u_\e (0)$.
It is not hard to check that $u_0$ satisfies \eqref{as-00}, \eqref{a3-0} and \eqref{a3-1}.
\end{proof}

\begin{remark}\label{h-app}
{\rm
Throughout the paper we shall assume that  the solution $u_\e$ is not constant.
Under the assumption that $A$ is Lipchitz continuous, 
this  implies that the harmonic approximation $u_0$ constructed in Theorem \ref{a3-thm} is not constant in $B(0, 7r/8)$. 
For otherwise, $u_\e$ would be constant in $\partial B(0, 7r/8)$.
It follows that $u_\e$ is constant in $B(0, 7r/8)$.
By unique continuation we may conclude that $u_\e$ is constant in $B(0, r)$.
We also point out that the powers of $\e/r$  in \eqref{a3-0} and \eqref{a3-1} are not sharp, 
but  are sufficient for our purpose.
In fact, any positive power would work for our proof.
}
\end{remark}

Fix $\lambda>0$ and a lattice $\Gamma$, let
\begin{equation}
\mathcal{M} (\lambda, \Gamma)
=\Big\{ A=A(y): \ A\text{ satisfies \eqref{ellipticity} and \eqref{periodicity} and } \widehat{A}+(\widehat{A})^T= 2I  \Big\}.
\end{equation}

\begin{thm}\label{c-thm}
Let $u_{\e_j}$ be a solution of $\text{\rm div}( A^j (x/\e_j) \nabla u_{\e_j}) =0$ in $B(0, r_0)$,
where $\e_j \to 0$ and $A^j \in \mathcal{M}(\lambda, \Gamma)$.
Suppose that $\{ u_{\e_j} \}$ is bounded in $L^2(B(0, r_0))$.
Then there exists a subsequence, still denoted by $\{u_{\e_j}\}$, and a harmonic function $u_0$ in $B(0, r_0)$,
such that 
$u_{\e_j} \to u_0$ weakly  in $L^2(B(0, r_0))$ and weakly in $H^1(B(0, r))$  for any $0<r< r_0$.
\end{thm}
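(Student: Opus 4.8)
The plan is to combine uniform energy bounds (Caccioppoli) with a compactness argument, and then identify the limit via the div-curl lemma. First I would fix $0 < r < r_0$ and pick an intermediate radius $r < \rho < r_0$. Since $\{u_{\e_j}\}$ is bounded in $L^2(B(0,r_0))$, the Caccioppoli inequality for $\text{\rm div}(A^j(x/\e_j)\nabla u_{\e_j}) = 0$ gives
\[
\int_{B(0,\rho)} |\nabla u_{\e_j}|^2 \le \frac{C}{(r_0 - \rho)^2} \int_{B(0,r_0)} u_{\e_j}^2 \le C(r,r_0),
\]
with $C$ depending only on $d$, $\lambda$. Hence $\{u_{\e_j}\}$ is bounded in $H^1(B(0,\rho))$; after passing to a subsequence (diagonalizing over an exhausting sequence of radii $r \uparrow r_0$), we obtain $u_{\e_j} \rightharpoonup u_0$ weakly in $L^2(B(0,r_0))$, weakly in $H^1(B(0,r))$ for every $r < r_0$, and strongly in $L^2(B(0,r))$ by Rellich.

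Next I would show $u_0$ is harmonic. Set $\xi_{\e_j} = A^j(x/\e_j)\nabla u_{\e_j}$, which is bounded in $L^2(B(0,\rho))$, so along a further subsequence $\xi_{\e_j} \rightharpoonup \xi_0$ weakly in $L^2$. The equation says $\text{\rm div}\,\xi_{\e_j} = 0$, so $\text{\rm div}\,\xi_0 = 0$ in the sense of distributions. The task is to identify $\xi_0$ with $\widehat{A}\nabla u_0$. This is the standard oscillating-test-function (Tartar) argument: for each $\ell$, let $v_{\e_j}^\ell(x) = x_\ell + \e_j \chi_\ell^j(x/\e_j)$ be the corrector solution for $A^j$, which satisfies $\text{\rm div}((A^j)(x/\e_j)\nabla v_{\e_j}^\ell) = 0$, $v_{\e_j}^\ell \to x_\ell$ in $L^2_{loc}$, and $\nabla v_{\e_j}^\ell$ is bounded in $L^2_{loc}$ with $(A^j)(x/\e_j)\nabla v_{\e_j}^\ell \rightharpoonup \widehat{A}^T e_\ell$ (using that the $\chi^j$ are bounded in $H^1$ of the cell uniformly in $j$, since $A^j$ all share $\lambda$ and $\Gamma$, and that $\widehat{A^j} = \widehat{A}$ is the same by the normalization $\widehat{A}+(\widehat{A})^T = 2I$ — actually one should pass to a further subsequence so that $\widehat{A^j}$ converges, say to $\widehat A$). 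Testing the equation for $u_{\e_j}$ against $\varphi v_{\e_j}^\ell$ and the equation for $v_{\e_j}^\ell$ against $\varphi u_{\e_j}$ (for $\varphi \in C_c^\infty(B(0,r_0))$), subtracting, and using the div-curl lemma to pass to the limit in the products $\xi_{\e_j}\cdot\nabla v_{\e_j}^\ell$ and $(A^j)(x/\e_j)\nabla v_{\e_j}^\ell \cdot \nabla u_{\e_j}$, one gets $\int \varphi\, \xi_0\cdot e_\ell = \int \varphi\, \langle \widehat{A}\nabla u_0, e_\ell\rangle$ for all $\ell$, hence $\xi_0 = \widehat{A}\nabla u_0$. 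Combined with $\text{\rm div}\,\xi_0 = 0$, this gives $\text{\rm div}(\widehat{A}\nabla u_0) = 0$, and since $\widehat{A}$ is constant with $\widehat{A} + (\widehat{A})^T = 2I$ we get $\Delta u_0 = 0$ in $B(0,r_0)$, exactly as in Remark \ref{symm-re}.

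I expect the main obstacle to be the uniformity over the varying matrices $A^j$: one needs the correctors $\chi^j$ and the homogenized matrices $\widehat{A^j}$ to be controlled uniformly, which follows from the fact that all $A^j$ lie in $\mathcal{M}(\lambda,\Gamma)$ (same ellipticity constant, same lattice), so their correctors are uniformly bounded in $H^1(Y)$ by the energy estimate for \eqref{cell-1}; passing to a subsequence we may assume $\widehat{A^j} \to \widehat{A}$ and $\chi^j \rightharpoonup \chi$ appropriately. Alternatively — and this is probably cleaner to write — one can avoid reproving qualitative $H$-convergence by invoking the harmonic approximation of Theorem \ref{a3-thm}: for each $j$ there is a harmonic $u_0^j$ on $B(0, 7r/8)$ with $\|u_{\e_j} - u_0^j\|_{L^\infty(B(0,3r/4))} \le C(\e_j/r)^{1/2}(\fint_{\partial B(0,r)} u_{\e_j}^2)^{1/2} \to 0$; the $u_0^j$ are then uniformly bounded harmonic functions, hence (subsequence) converge locally uniformly to a harmonic $u_0$, and $u_{\e_j} \to u_0$ in $L^2_{loc}$, with the $H^1$ weak convergence coming from the Caccioppoli bound as above. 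This second route is self-contained within the paper's machinery and is the one I would adopt.
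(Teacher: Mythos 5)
Your first argument (Caccioppoli bound, weak limits of $u_{\e_j}$ and of the fluxes $\xi_{\e_j}=A^j(x/\e_j)\nabla u_{\e_j}$, identification of $\xi_0=\overline{A}\nabla u_0$ by the Tartar oscillating test-function / div-curl argument, after passing to a further subsequence so that $\widehat{A^j}\to\overline{A}$) is correct and is in substance the paper's proof: the paper does not redo this computation but cites the standard compactness theorem for periodic $H$-convergence (\cite[pp.22--24]{Shen-Book}), whose proof is exactly what you wrote out, and then concludes as you do that $\overline{A}+(\overline{A})^T=2I$ forces $\Delta u_0=0$. Your self-correction about the homogenized matrices is the right one: membership in $\mathcal{M}(\lambda,\Gamma)$ only pins down the symmetric part of $\widehat{A^j}$, so one extracts a convergent subsequence $\widehat{A^j}\to\overline{A}$ rather than asserting they are all equal; this is precisely how the paper phrases it.

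The route you say you would actually adopt, however, has a genuine gap: Theorem \ref{a3-thm} is stated under ``the same conditions as in Lemma \ref{a2-thm}'', which include the H\"older continuity \eqref{H} of the coefficients (it rests on the Avellaneda--Lin Lipschitz estimate and on the corrector expansion in Lemma \ref{a1-lemma}). Theorem \ref{c-thm} assumes only $A^j\in\mathcal{M}(\lambda,\Gamma)$, i.e.\ ellipticity, periodicity and the normalization $\widehat{A^j}+(\widehat{A^j})^T=2I$, with no smoothness whatsoever, so the harmonic approximants $u_0^j$ with the quantitative $L^\infty$ error $C(\e_j/r)^{1/2}$ are simply not available under the hypotheses of the theorem. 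The paper is explicit about this division of labor: Remark \ref{re-com} upgrades the weak convergence of Theorem \ref{c-thm} to the locally uniform convergences \eqref{co-3}--\eqref{co-4} only ``under the additional condition that $A^j$ is H\"older continuous''. So if you write up the second route you prove a weaker statement than Theorem \ref{c-thm} (and you would also need a small diagonal argument in $r$ and a Fubini choice of good spheres for \eqref{as-00}, though those are minor). Keep the first argument; it needs no smoothness and is the one that matches the theorem as stated.
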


\begin{proof}
It is more or less well known that there exists a subsequence, still denoted by $\{ u_{\e_j} \}$, and a function $u_0 \in L^2(B(0, r_0))$, 
such that $u_{\e_j} \to u_0$ weakly in $L^2(B(0, r_0))$,
$\nabla u_{\e_j} \to \nabla u_0 $ and $A^j (x/\e_j) \nabla u_{\e_j} \to \overline{A} \nabla u_0$ weakly in 
$B(0, r)$ for any $0< r< r_0$. See e.g. \cite[pp.22-24]{Shen-Book} for a proof. 
The constant matrix $\overline{A}$ is a limit of $\widehat{A^j}$.
Since $A^j \in \mathcal{M}(\lambda, \Gamma)$, we have $\widehat{A^j}+(\widehat{A^j})^T=2I$.
It follows that $\overline{A} +( \overline{A})^T=2I$ and thus $u_0$ is harmonic in $B(0, r_0)$.
\end{proof}

\begin{remark}\label{re-com}
It follows from the proof of Theorem \ref{a3-thm} that if $\mathcal{L}_\e (u_\e)= \mathcal{L}_0 (u_0)=0$ in 
$B(0, r_0)$,  then
\begin{equation}\label{co-1}
\| u_\e -u_0 \|_{L^\infty(B(0, r))} \le C_r \| u_\e-u_0 \|_{L^2(B(0, r_0))} + O (\sqrt{\e}),
\end{equation}
\begin{equation}\label{co-2}
\| \nabla u_\e -(I +\nabla \chi(x/\e)) \nabla u_0 \|_{L^\infty(B(0, r))}
\le C_r \| u_\e -u_0 \|_{L^2(B(0, r_0))} + O (\sqrt{\e}), 
\end{equation}
 for any $0< r< r_0$.
This shows that under the additional condition that $A^j$ is H\"older continuous, the convergence of
the subsequence $\{ u_{\e_j}\}$ in Theorem \ref{c-thm} can be strengthened to 
\begin{equation}\label{co-3}
\| u_{\e_j}  -u_0 \|_{L^\infty(B(0, r))} \to 0,
\end{equation}
\begin{equation}\label{co-4}
\| \nabla u_{\e_j}  -(I +\nabla  \chi^j (x/\e_j )) \nabla u_0 \|_{L^\infty(B(0, r))}
\to 0, 
\end{equation}
 for any $0< r< r_0$.
where  $\chi^j$ denote  the first-order correctors for the matrix $A^j$. 
\end{remark}

We end this section with an observation regarding the invertibility condition \eqref{inv-0}.
Let $v^\xi (y) =\langle \xi,  y+ \chi (y)\rangle $, where $\xi \in \Sd$.
Suppose that $\text{det}(I+\nabla\chi (y_0))=0$ for some $y_0 \in Y$.
Then $ \nabla v^\xi (y_0)=0$ for some $\xi \in \Sd$.
Let 
$$
u_\e (x)= \e v^\xi (x/\e)=\langle  \xi, x +\e \chi(x/\e)\rangle,
$$
 which satisfies the doubling condition \eqref{doubling-0} if $\e$ is small.
Note that  $\mathcal{L}_\e (u_\e)=0$ in $\R^d$ and 
$$
\nabla u_\e (x) = \nabla v^\xi (x/\e)
=\langle \xi,  I +\nabla \chi (x/\e)\rangle.
$$
Since $\nabla u_\e$ is periodic, 
$$
\big\{ x\in \R^d: \ x=\e (y_0 + z) \text{ for some } z\in \Gamma \big\}\subset \mathcal{C} (u_\e).
$$
It follows that
$$
| \big\{x: \ \text{dist} (x, \mathcal{C} (u_\e)\cap B(0, 1/2))< r \big\}|
\ge c_d \e^{-d} r^d,
$$
if $0< r\le \e$.
As a consequence, the estimate \eqref{H-M-00} cannot hold uniformly for $\e^{d/(d-2)} <  r\le \e$.
This shows that the condition \eqref{inv-0} in Theorem \ref{main-thm}  is necessary for \eqref{H-M-00} with $r> \e^{d/(d-2)}$.


\section{Doubling Conditions}\label{section-3}

We introduce a doubling index for a continuous function $u$ on a ball $B(x_0, r)$, defined by 
\begin{equation}\label{d-0}
\wN (u, x_0, r)
=\log_ 4 \frac{\fint_{\partial B(x_0, r)} (u-u(x_0))^2 }{\fint_{\partial B(x_0, r/2)} (u-u(x_0))^2},
\end{equation}
assuming $\| u-u(x_0)\|_{L^2(\partial B(x_0, t))} \neq 0$ for $0< t\le r$.
If  $u$ is a (non-constant)  harmonic function  in $B(x_0, r)$, then
\begin{equation}\label{d-01}
\wN(u, x_0, r)=\frac{1}{\ln 2} \int_{r/2}^r \frac{N(u, x_0, s)}{s} \, ds,
\end{equation}
where $N(u, x_0, r)$ denotes Almgren's  frequency  for $u-u(x_0)$ on $B(x_0, r)$, given by 
\begin{equation}\label{fq}
N(u, x_0, r) =\frac{ r\int_{B(x_0, r)} |\nabla u|^2}{\int_{\partial B(x_0,r)} (u-u(x_0))^2}.
\end{equation}
Since $N(u, x_0, r)$ is nondecreasing in $r$, it follows that 
\begin{equation}\label{d-02}
N(u, x_0, r/2) \le \wN (u, x_0, r) \le N(u, x_0, r)
\end{equation}
if $u$ is harmonic in $B(x_0, r)$.

The doubling index for $u_\e$, in general, may not be monotone in $r$.
The next theorem provides a substitute for the monotonicity, which plays a key role in the study of geometric properties of
solutions of elliptic equations.
We point out that a similar result was proved by the present authors in \cite{LS-Nodal}, using averages on balls instead of spheres.
The result on the doubling condition in \cite{LS-Nodal} was the first one for elliptic operators with periodic coefficients.
Subsequent work on  doubling inequalities and the closely related  three-ball lemma, which focus on the explicit 
dependence on $\ell$ of doubling constants,  may be found in \cite{Kenig-Zhu-2021, Kenig-ZZ-2021, Armstrong-2021}.

Define
\begin{equation} \label{M-2}
\mathcal{M}(\lambda, \Gamma, M)
=\Big\{ A=A(y): \  A \text{ satisfies \eqref{ellipticity}, \eqref{periodicity}, \eqref{smoothness}, and } \widehat{A}+(\widehat{A})^T=2I \Big\}.
\end{equation}

\begin{thm}\label{d1-thm}
Let  $L\ge 2$ and $\delta_0\in (0, 1/2]$.
Assume that $A\in \mathcal{M}(\lambda, \Gamma, M)$.
There exists $\e_0=\e_0 (L, \delta_0)>0$ such that  if $0< \e< \e_0r $ and $u_\e \in H^1(B(x_0, r))$ is a non-constant solution of
$\mathcal{L}_\e (u_\e)=0$ in $B(x_0, r)$ for some $r>0$ and $x_0 \in \R^d$, with the properties that,
\begin{equation}\label{d-1}
\wN(u_\e, x_0, r)\le  L+1 \quad \text{ and } \quad
\wN (u_\e, x_0, r/2) \le \ell \pm \delta_0,
\end{equation}
where $\ell \in \mathbb{N}$ and $1\le \ell \le L$,
 then 
 \begin{equation}\label{d-2}
 \wN(u_\e, x_0, s)\le \ell \pm \delta_0 \quad \text{ for } \ \  \frac{r}{8}  \le s \le \frac{r}{4}.
 \end{equation}
 Furthermore, if $0< \e< \e_0 r/2$, then 
 \begin{equation}\label{d-3}
 N^* (u_\e, x_0, s)\le \ell \pm \delta_0 \quad \text{ for } \ \ \frac{\e}{ 2 \e_0} \le s \le \frac{r}{4}.
 \end{equation}
 \end{thm}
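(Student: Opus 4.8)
The plan is to prove \eqref{d-2} by a compactness/contradiction argument based on the homogenization estimates of Section \ref{section-2}, and then to obtain \eqref{d-3} by iterating \eqref{d-2} on dyadic subballs.

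\textit{Proof of \eqref{d-2}.} Suppose it fails for the given $L$ and $\delta_0$. Then there are $\e_j\to 0^+$, matrices $A^j\in\mathcal{M}(\lambda,\Gamma,M)$, points $x_j\in\R^d$, radii $r_j>0$ with $0<\e_j<r_j/j$, integers $\ell_j\in\{1,\dots,L\}$, and non-constant solutions $u_{\e_j}$ of $\text{\rm div}(A^j(x/\e_j)\nabla u_{\e_j})=0$ in $B(x_j,r_j)$ with $\wN(u_{\e_j},x_j,r_j)\le L+1$, $\wN(u_{\e_j},x_j,r_j/2)\le \ell_j+\delta_0$, but $\wN(u_{\e_j},x_j,r_j/4)>\ell_j+\delta_0$. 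Translating and rescaling by $x\mapsto u_{\e_j}(x_j+r_jx)$ (which replaces $\e_j$ by $\e_j/r_j\to 0$, keeps $A^j\in\mathcal{M}(\lambda,\Gamma,M)$, and leaves every doubling index unchanged) we may assume $x_j=0$, $r_j=1$; replacing $u_{\e_j}$ by $(u_{\e_j}-u_{\e_j}(0))\big/\|u_{\e_j}-u_{\e_j}(0)\|_{L^2(\partial B(0,1/2))}$ we may also assume $u_{\e_j}(0)=0$ and $\fint_{\partial B(0,1/2)}u_{\e_j}^2=1$. Passing to a subsequence, $\ell_j\equiv\ell$ for a fixed $\ell\in\{1,\dots,L\}$. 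From $\wN(u_{\e_j},0,1)\le L+1$ we get $\fint_{\partial B(0,1)}u_{\e_j}^2\le 4^{L+1}$, and Lemma \ref{a4-prop} then gives $\fint_{B(0,1)}u_{\e_j}^2\le C4^{L+1}$, so $\{u_{\e_j}\}$ is bounded in $L^2(B(0,1))$.

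By Theorem \ref{c-thm} together with Remark \ref{re-com} (the $A^j$ are uniformly Lipschitz, hence uniformly H\"older), a further subsequence satisfies $u_{\e_j}\to u_0$ in $L^\infty_{\rm loc}(B(0,1))$, with $u_0$ harmonic in $B(0,1)$. Then $u_0(0)=0$ and $\fint_{\partial B(0,1/2)}u_0^2=1$, so $u_0$ is non-constant; by unique continuation $\fint_{\partial B(0,t)}u_0^2>0$ for every $t\in(0,1)$, and in particular $N(u_0,0,s)$ is finite for $s\in(0,1)$. Since the spheres of radii $1/8,1/4,1/2$ are compact subsets of $B(0,1)$, uniform convergence gives $\wN(u_{\e_j},0,1/2)\to\wN(u_0,0,1/2)$ and $\wN(u_{\e_j},0,1/4)\to\wN(u_0,0,1/4)$, so
\[
\wN(u_0,0,1/2)\le\ell+\delta_0\qquad\text{and}\qquad\wN(u_0,0,1/4)\ge\ell+\delta_0 .
\]

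Now invoke the monotonicity of Almgren's frequency for the harmonic function $u_0$. By \eqref{d-01}--\eqref{d-02},
\[
\ell+\delta_0\le\wN(u_0,0,1/4)\le N(u_0,0,1/4)\le\wN(u_0,0,1/2)\le\ell+\delta_0 ,
\]
so all four quantities equal $\ell+\delta_0$. Since $s\mapsto N(u_0,0,s)$ is nondecreasing while, by \eqref{d-01}, $\tfrac{1}{\ln 2}\int_{1/8}^{1/4}\tfrac{N(u_0,0,s)}{s}\,ds=\wN(u_0,0,1/4)=N(u_0,0,1/4)$ equals its maximum on $[1/8,1/4]$, it is constant there: $N(u_0,0,s)\equiv\ell+\delta_0$ on $[1/8,1/4]$. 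By the equality case in Almgren's monotonicity formula, $u_0$ is homogeneous of degree $\ell+\delta_0$ in the annulus $\{1/8<|x|<1/4\}$, i.e.\ $x\cdot\nabla u_0=(\ell+\delta_0)u_0$ there; as both sides are real-analytic in $B(0,1)$ this identity holds throughout $B(0,1)$. Differentiating it $\ell+1$ times, $\partial^\beta u_0$ is homogeneous of degree $\ell+\delta_0-|\beta|<0$ for $|\beta|=\ell+1$; being also bounded near $0$ (as $u_0$ is harmonic) such a function vanishes identically, so $u_0$ is a polynomial of degree $\le\ell$ with $x\cdot\nabla u_0=(\ell+\delta_0)u_0$, which forces $u_0\equiv0$ because $\ell+\delta_0\notin\{0,1,\dots,\ell\}$ (here $\delta_0\in(0,1/2]$ is used). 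This contradicts $u_0$ being non-constant and proves \eqref{d-2} for some $\e_0=\e_0(L,\delta_0)>0$. To obtain \eqref{d-3}, we induct on $j$: the case $j=2$ is \eqref{d-2}; for $2\le j\le J+1$, assuming $\wN(u_\e,x_0,r/2^{j-1})\le\ell+\delta_0$ and $\wN(u_\e,x_0,r/2^{j})\le\ell+\delta_0$ (which for $j=2$ hold by \eqref{d-1} and \eqref{d-2}), apply \eqref{d-2} to $u_\e$ on $B(x_0,\rho)$ with $\rho=r/2^{j-1}$: since $2^J\e<\e_0 r$ and $j-1\le J$ we have $\e<\e_0\rho$, while $\wN(u_\e,x_0,\rho)\le\ell+\delta_0\le L+1$ and $\wN(u_\e,x_0,\rho/2)=\wN(u_\e,x_0,r/2^{j})\le\ell+\delta_0$; hence $\wN(u_\e,x_0,r/2^{j+1})=\wN(u_\e,x_0,\rho/4)\le\ell+\delta_0$, advancing the induction through $j+1=J+2$.

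The main obstacle is the passage to the limit: one needs convergence strong enough on spheres to read off the limiting doubling indices, which is precisely the $L^\infty$ strengthening of homogenization compactness available under the Lipschitz hypothesis (Remark \ref{re-com}); the second delicate point is the rigidity step, where the integrality of the degrees of homogeneous harmonic functions is what converts the a priori bound $\wN\le\ell+\delta_0$ with $\delta_0\in(0,1/2]$ into a genuine contradiction rather than a mere equality.
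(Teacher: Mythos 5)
Your proof is correct and follows essentially the same route as the paper: a contradiction/compactness argument using the homogenization convergence of Theorem \ref{c-thm} and Remark \ref{re-com}, the monotonicity relations \eqref{d-01}--\eqref{d-02} to pin the limiting doubling indices at $\ell+\delta_0$, and the rigidity that a constant Almgren frequency forces homogeneity of non-integer degree, which is impossible; the iteration giving \eqref{d-3} is the same rescaling induction the paper invokes. The only difference is cosmetic: you spell out the equality case of the monotonicity formula and the Euler-identity degree argument where the paper simply cites that the limit must be a homogeneous harmonic polynomial of integer degree.
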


\begin{proof}
By translation and dilation we may assume $x_0=0$ and $r=2$. We may also assume $u_\e (0)=0$.
We will show that if $0< \e< \e_0 (L, \delta_0)$ and $u_\e$ is a non-constant solution of $\mathcal{L}_\e (u_\e)=0$ in 
$B(0, 2)$ with the properties that  $u_\e (0)=0$, $N^*(u_\e, 0, 2) \le L+1$ and $N^* (u_\e, 0, 1)\le \ell \pm \delta_0$,
where $\ell \in \mathbb{N}$ and $1\le \ell \le L$, then 
\begin{equation}\label{d-30}
N^*(u_\e, 0, s)\le \ell \pm \delta_0 \quad \text{  for  }\ \  (1/4)\le s\le (1/2).
\end{equation}
By rescaling it follows that if $0< \e< 2^{-j} \e_0$ for some $j \ge 0$, $N^*(u_\e, 0, 2^{-j+1})\le L+1$ and $N^* (u_\e, 0, 2^{-j})\le \ell \pm \delta_0$,
then
$$
N^*(u_\e, 0, s) \le \ell \pm \delta_0 \quad \text{ for } \ \ 2^{-j-2} \le s \le  2^{-j-1}.
$$
 By induction,  this implies that if $0< \e < 2^{-J} \e_0$ for some $J\ge  0$, $N^*(u_\e, 0, 2) \le L+1$ and
 $N^* (u_\e, 0, 1) \le \ell \pm \delta_0$, then
 $$
 N^*(u_\e, 0, s) \le \ell \pm \delta_0 \quad \text{ for } \ \  2^{-J-2} \le s \le (1/2).
 $$
 By choosing $J$ such that $2^{-J-1} \e_0 \le  \e < 2^{-J} \e_0$, we obtain \eqref{d-3} with $r=2$ and $x_0=0$.
 
 To show \eqref{d-30} we argue by contradiction.
Suppose there exist sequences $ \{ \e_j \}$, $\{s_j\}$,  $\{A^j\}\subset \mathcal{M}(\lambda, \Gamma, M)$,  and $\{u_{\e_j} \}\subset H^1(B(0, 2))$ such that
$\e_j \to 0$,  $\{s_j \} \subset [1/4, 1/2]$, div$(A^j (x/ \e_j) \nabla u_{\e_j} )=0$ in $B(0, 2)$, 
\begin{equation}\label{d-4}
\wN (u_{\e_j}, 0, 2) \le L+1 \quad \text{ and } \quad \wN (u_{\e_j} , 0, 1)\le  \ell \pm \delta_0,
\end{equation}
but $\wN (u_{\e_j}, 0, s_j  ) > \ell \pm \delta_0$, where $\ell \in \mathbb{N}$ and $1\le \ell \le L$.
We may assume that  $s_j \to s_0\in [1/4, 1/2]$,  and
\begin{equation}\label{d31}
\fint_{\partial B(0, 1/2)} u_{\e_j}^2 =1.
\end{equation}
Using  \eqref{d-4}, \eqref{d31}  and \eqref{a4-0},
we deduce that $\{ u_{\e_j} \}$ is bounded in $L^2(B(0, 2))$.
It follows by Theorem \ref{c-thm}  that there exists a subsequence, which we still denote by $\{u_{\e_j} \}$, and a harmonic function $u_0$ in $B(0, 2)$,
such that $u_{\e_j}  \to u_0$ weakly in $H^1(B(0, 3/2))$.
By \eqref{co-1}, we obtain $u_{\e_j} \to u_0$ uniformly on $B(0, 1)$.
As a consequence,  $u_0(0)=0$,
$$
\fint_{\partial B(0, 1/2)} u_0^2=1 \quad \text{ and } \quad
\wN(u_0, 0, s_0) \ge \ell \pm \delta_0.
$$
In particular,  $u_0$ is not constant in $B(0, 2)$.
The same argument also gives
$$
\wN (u_0, 0, 1) \le \ell \pm \delta_0.
$$
Since $N^*(u_0, 0, r)$ is non-decreasing for harmonic functions, we see that 
$$
\wN(u_0, 0, s_0) =\wN (u_0,0,  1)=\ell \pm \delta_0.
$$
However, this implies that 
 $u_0$ is a homogeneous harmonic polynomial of degree $k$ and that $\wN(u_0, 0, r)=k$ for any $r>0$.
We obtain a contradiction,  as $\ell \pm \delta_0$ is not an integer.
\end{proof}

 \begin{lemma}\label{lemma-d1}
 Fix $L\ge 2$. 
 Let $u, v$ be two continuous functions in $ \{ x\in \R^d:\  |x|\le r \}$
 such that $\| u-u(0) \|_{L^2(\partial B(0, r/2))}, \| v-v(0) \|_{L^2(\partial B(0, r/2))} \neq 0$,  and
 $$
\| u -v \|_{L^\infty (B(0, r) ) }\le  \e\,  \left(\fint_{\partial B(0, r)} (u-u(0))^2 \right)^{1/2}.
$$
 Suppose that
 $
 \wN (u, 0, r)\le L.
 $
Then, if $0< \e< \e_0 (L)$, 
\begin{equation}\label{d-10}
|\wN  (u, 0, r) -\wN (v, 0, r) | \le C \e,
\end{equation}
where $C$ depends on $L$. 
 \end{lemma}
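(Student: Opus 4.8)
\emph{Proposed approach.} The plan is to treat this as a purely elementary stability statement for the doubling index under an $L^\infty$ perturbation; no ellipticity, periodicity, or PDE structure is needed. By translating so that the ball is centered at the origin and dilating, I would reduce to $r=1$. If $H_u(1):=\fint_{\partial B(0,1)}(u-u(0))^2=0$, then the hypothesis forces $\|u-v\|_{L^\infty(B(0,1))}=0$, so $u\equiv v$ and \eqref{d-10} is trivial; hence I assume $H_u(1)>0$ and put $\delta=\e\,H_u(1)^{1/2}$, so that $\|u-v\|_{L^\infty(B(0,1))}\le \delta$ and in particular $|u(0)-v(0)|\le\delta$.

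\emph{Comparison of spherical $L^2$ means.} For $s\in\{1,1/2\}$ set $H_u(s)=\fint_{\partial B(0,s)}(u-u(0))^2$ and $H_v(s)=\fint_{\partial B(0,s)}(v-v(0))^2$, so that $\wN(u,0,1)=\log_4\big(H_u(1)/H_u(1/2)\big)$ and similarly for $v$. The function $(u-u(0))-(v-v(0))$ has sup norm at most $2\delta$ on $B(0,1)$, so Minkowski's inequality on $L^2(\partial B(0,s))$ gives $|H_v(s)^{1/2}-H_u(s)^{1/2}|\le 2\delta$. The hypothesis $\wN(u,0,1)\le L$ means $H_u(1/2)=4^{-\wN(u,0,1)}H_u(1)\ge 4^{-L}H_u(1)$, hence $H_u(s)^{1/2}\ge 2^{-L}H_u(1)^{1/2}=2^{-L}\delta/\e$ for both values of $s$, and therefore
\begin{equation*}
\Big|\frac{H_v(s)^{1/2}}{H_u(s)^{1/2}}-1\Big|\le \frac{2\delta}{H_u(s)^{1/2}}\le 2^{L+1}\e .
\end{equation*}
Choosing $\e_0=\e_0(L)$ so small that $2^{L+1}\e_0\le 1/2$, I obtain $H_v(s)=H_u(s)\,(1+\theta_s)$ with $|\theta_s|\le C_L\,\e$ for $s=1,1/2$.

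\emph{Conclusion and main point to watch.} Dividing the two identities gives $H_v(1)/H_v(1/2)=\big(H_u(1)/H_u(1/2)\big)(1+\theta)$ with $|\theta|\le C_L\,\e$, and applying $\log_4$ together with the elementary bound $|\log_4(1+t)|\le C|t|$ for $|t|\le 1/2$ yields $|\wN(v,0,1)-\wN(u,0,1)|\le C(L)\,\e$, which is \eqref{d-10} after undoing the rescaling. I do not expect a real obstacle here; the only thing to be careful about is that the lower bound $H_u(1/2)\ge 4^{-L}H_u(1)$, which is what converts the additive error $2\delta$ into a \emph{relative} error $O_L(\e)$, degrades like $2^{L}$, so $\e_0$ must be taken depending on $L$ — exactly as in the statement.
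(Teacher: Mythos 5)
Your proposal is correct and follows essentially the same route as the paper: an elementary comparison of the spherical $L^2$ averages of $u$ and $v$ at radii $r$ and $r/2$, using the $L^\infty$ closeness and the bound $\wN(u,0,r)\le L$ to turn the additive error $\e\,\|u-u(0)\|_{L^2(\partial B(0,r))}$ into a relative error of size $C(L)\e$ at both radii. The only difference is bookkeeping: you record multiplicative perturbations $H_v(s)=H_u(s)(1+\theta_s)$ and take $\log_4$ of the ratio, whereas the paper proves the two one-sided inequalities $2^{\wN(v,0,1)}\le 2^{\wN(u,0,1)}+C\e$ and its symmetric counterpart after normalizing $\fint_{\partial B(0,1/2)}(u-u(0))^2=1$ — the same idea in additive form.
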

 
 \begin{proof}
 By dilation we may assume $r=1$.
We may also assume
$$
\fint_{\partial B(0, 1/2)}(u-u(0))^2 =1.
$$
Since $\wN(u, 0, 1) \le L$, it follows that 
$$
\| u - v \|_{L^\infty(B(0, 1))}  \le  C \e.
$$
 Note that 
 $$
 \aligned
 \left(\fint_{\partial B(0, 1)} (v-v(0))^2\right)^{1/2}
  & \le  \left(\fint_{\partial B(0, 1)}  (u-u(0))^2\right)^{1/2} +C \e
  \\ &  = 2^{\wN (u, 0, 1)} 
    \left(\fint_{\partial B(0, 1/2) {}} (u-u(0))^2\right)^{1/2} +C \e\\
  &\le \left( 2^{\wN (u, 0, 1 )} + \frac{C \e}{1-C\e} \right)
    \left(\fint_{\partial B(0, 1/2) {}} (v-v(0))^2\right)^{1/2},
\endaligned
 $$
using
 $$
 1 \le  \left(\fint_{\partial B(0, 1/2)}  (v-v(0))^2\right)^{1/2} + C_0\e.
 $$
 Thus, if $0< \e< \e_0(L)$, where $\e_0(L)\in (0, 1/4)$ is chosen so that $C_0\e_0(L) \le  1/2$, then
$$
2^{\wN (v, 0, 1)}
\le 2^{\wN (u, 0, 1)} + C \e.
$$
Hence, 
\begin{equation}\label{d-11}
\wN (v, 0, 1) \le \wN(u, 0, 1) + C_1 \e,
\end{equation}
where $C_1$ depends on $L$.
Note that $\wN(v, 0, 1) \le L +C_1\e$ and
$$
\| u-v\|_{L^\infty(B(0, 1))} \le C \e \, \| v -v(0)\|_{L^2(\partial B(0, 1))}.
$$
The same argument also gives
$$
\wN (u, 0, 1) \le \wN(v, 0, 1) + C_2 \e,
$$
which, together with \eqref{d-11}, yields \eqref{d-10}.
\end{proof}

\begin{lemma}\label{h-drop-lemma}
Let $u\in H^1(B(0, r))$ be  harmonic  in $B(0, r)$.
Suppose that
\begin{equation}\label{h-d-1}
N(u,0, r) \le \ell -\delta
\end{equation}
for some $\ell \in \mathbb{N}$ and $\delta \in (0, 1/2]$.
Then
\begin{equation}\label{h-d-2}
N(u, 0, tr) \le \ell -1 +\delta
\end{equation}
for $t \in (0, \delta \ell^{-1}]$.
\end{lemma}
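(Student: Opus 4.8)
The plan is to use the standard fact that for a harmonic function, the frequency $N(u,0,r)$ has an explicit description in terms of the spherical-harmonic decomposition, and that the frequency can only grow by ``losing'' mass from the lowest nonzero mode to higher modes as $r$ increases, equivalently by gaining mass in the lowest mode as $r$ decreases. Write the harmonic function as $u(x)-u(0)=\sum_{k\ge 1} P_k(x)$, where $P_k$ is a homogeneous harmonic polynomial of degree $k$; for $x=\rho\omega$ with $\omega\in\Sd$ this is $\sum_{k\ge 1}\rho^k P_k(\omega)$. Put $a_k=\|P_k\|_{L^2(\Sd)}^2$, so that $\fint_{\partial B(0,\rho)}(u-u(0))^2 = \sum_k a_k \rho^{2k}$ (up to the fixed normalization constant $|\Sd|$). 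Then $N(u,0,\rho)=\dfrac{\sum_k k\,a_k\rho^{2k}}{\sum_k a_k\rho^{2k}}$, a weighted average of the degrees $k$ with weights $a_k\rho^{2k}$.

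First I would record that $N(u,0,r)\le \ell-\delta < \ell$ forces $a_\ell, a_{\ell+1},\dots$ to be small relative to the lower modes at scale $r$: since the weighted average of $\{k\}$ is at most $\ell-\delta$, and each term with $k\ge\ell$ contributes at least $\ell$, a Markov/Chebyshev-type estimate gives
\begin{equation}\label{h-drop-pf-1}
\sum_{k\ge \ell} a_k r^{2k} \le \frac{\ell-\delta}{\ell}\Big(\text{wait, refine}\Big),
\end{equation}
— more precisely $\sum_{k\ge\ell}(k-(\ell-\delta))a_k r^{2k}\le \sum_{k<\ell}((\ell-\delta)-k)a_k r^{2k}\le (\ell-\delta)\sum_{k<\ell}a_k r^{2k}$, hence $\delta\sum_{k\ge\ell}a_k r^{2k}\le(\ell-\delta)\sum_{k<\ell}a_k r^{2k}$, i.e. the high-mode mass at scale $r$ is at most $\tfrac{\ell-\delta}{\delta}$ times the low-mode mass. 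Next I would see how the weights rescale: replacing $r$ by $tr$ multiplies the weight of mode $k$ by $t^{2k}$. For $k\le \ell-1$ this changes the low-mode weights by at most a factor $1$ (they only shrink), while for $k\ge\ell$ they shrink by the extra factor $t^{2(k-\ell+1)}\le t^2$ relative to mode $\ell-1$. So at scale $tr$ the high-mode mass is at most $\tfrac{\ell-\delta}{\delta}t^{2}$ times the mass in modes $\le\ell-1$ evaluated at scale $tr$; with $t\le \delta/\ell$ this ratio is at most $\tfrac{\ell-\delta}{\delta}\cdot\tfrac{\delta^2}{\ell^2}\le \tfrac{\delta}{\ell}$.

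Finally I would bound $N(u,0,tr)$ directly: split the weighted average into $k\le\ell-1$ (each term contributes a degree $\le \ell-1$) and $k\ge\ell$ (each term contributes degree $k$, but now carries total weight fraction $\le \tfrac{\delta}{\ell}$ of the remaining mass). Using that the high modes have relative weight $\theta\le \tfrac{\delta}{\ell}$ and that a tail weighted average $\sum_{k\ge\ell}k\,w_k/\sum_{k\ge\ell}w_k$ for a harmonic function on a ball where the series converges is itself controlled — here I would invoke that $N(u,0,\rho)$ is finite and nondecreasing, so on $B(0,r)$ the full frequency is $\le \ell-\delta$, which in particular caps $\sum k a_k r^{2k}\big/\sum a_k r^{2k}$ and keeps all tails summable — one gets $N(u,0,tr)\le (1-\theta)(\ell-1)+\theta\cdot(\text{tail average})$. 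The clean way to finish is to write $N(u,0,tr)=\ell-1+\sum_{k\ge\ell}(k-\ell+1)\,a_k(tr)^{2k}\big/\sum_k a_k(tr)^{2k} - \sum_{k\le\ell-1}(\ell-1-k)a_k(tr)^{2k}\big/\sum_k a_k(tr)^{2k}$, drop the last (nonnegative) term, and bound the middle term: its numerator is $\le \sum_{k\ge\ell}(k-\ell+1)a_k r^{2k}t^{2k}$, and comparing to the denominator, which is at least the mode-$(\ell-1)$ or lowest-present-mode contribution, the factor $t^{2(k-\ell+1)}\le t^2\le(\delta/\ell)^2$ together with the scale-$r$ bound $\sum_{k\ge\ell}(k-\ell+\delta)a_k r^{2k}\le(\ell-\delta)\sum_{k<\ell}a_k r^{2k}$ yields that this middle term is $\le \delta$. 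Hence $N(u,0,tr)\le \ell-1+\delta$, which is \eqref{h-d-2}.

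The main obstacle I anticipate is handling the denominator carefully: one must make sure that when passing from scale $r$ to scale $tr$ the low-mode mass does not itself collapse faster than the high-mode mass, so that the ratio estimate survives. This is exactly why the hypothesis is $N(u,0,r)\le\ell-\delta$ rather than merely $N(u,0,r)<\ell$: the positive gap $\delta$ both guarantees a definite amount of mass sits in modes $\le\ell-1$ at scale $r$ (via the identity $\delta\sum_{k\ge\ell}a_kr^{2k}\le(\ell-\delta)\sum_{k<\ell}a_kr^{2k}$ above) and gives room for the factor $t^2\le(\delta/\ell)^2$ to absorb the degree factors $(k-\ell+1)$ in the tail. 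Everything else is bookkeeping with the power series $\sum a_k\rho^{2k}$ and is routine once the two-sided control of low-mode versus high-mode mass is set up.
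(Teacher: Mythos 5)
Your proposal is correct and follows essentially the same route as the paper's proof: expand in spherical harmonics, view $N(u,0,\rho)$ as a weighted average of the degrees with weights $a_k\rho^{2k}$, use the gap $\delta$ at scale $r$ to bound the high-mode mass by the low-mode mass, and then exploit the extra factor $t^{2}$ (from comparing $t^{2\ell}$ against $t^{2(\ell-1)}$) with $t\le\delta/\ell$ to make the high-mode contribution at scale $tr$ at most $\delta$. The only differences are cosmetic bookkeeping (you keep a relative high/low mass bound instead of normalizing $\|u\|=1$, and you organize the final estimate via the identity with $k-\ell+1$), so no further comment is needed.
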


\begin{proof}
By dilation we may assume $r=1$. We  also assume $\| u\|=1$, where $\| \cdot \|$ denote the norm in $L^2(\Sd)$.
Write
$$
u(\omega) =\sum_{k \ge 0} a_k \psi_k (\omega)
$$
for $\omega\in \Sd$, where $\psi_k $ are  spherical harmonics of order $k$ with $\| \psi_k \|=1$. Then
$$
N(u, 0, t) =\frac{\sum_k k a_k^2 t^{2k} }{\sum_k a_k^2 t^{2k}}
$$
for $0< t\le 1$. 
Since $\sum_k a_k^2 = \| u\|^2 =1$, we see that 
$$
 \sum_{k \ge \ell } a_k^2 \le\ell^{-1} \sum_{k \ge  \ell } k a_k^2 \le \ell^{-1} N(u, 0, 1) \le 1-\delta\ell^{-1}.
$$
It follows that 
\begin{equation}\label{h-drop-00}
\sum_{k \le \ell-1}  a_k^2 \ge \delta \ell^{-1}.
\end{equation}
Thus, 
$$
\aligned
N(u, 0, t)  & =\frac{\sum_{k \le \ell -1} k a_k^2 t^{2k}}{\sum_k a_k^2 t^{2k}}
+ \frac{\sum_{k \ge \ell } k a_k^2 t^{2k}}{\sum_k a_k^2 t^{2k}}
\\
& \le \ell -1 + \frac{\sum_{k \ge \ell } k a_k^2 t^{2k}}{\sum_{k\le \ell -1}  a_k^2 t^{2k}}\\
&\le \ell -1 +  \frac{ t^{2\ell} \sum_{k \ge \ell } k a_k^2  }{ t^{2(\ell-1)} \sum_{k\le \ell -1}  a_k^2 }\\
&\le \ell -1 + t^2 \ell^2 \delta^{-1}\\
&\le \ell-1 + \delta,
\endaligned
$$
if $t\le \delta \ell^{-1}$, where we have used \eqref{h-drop-00} for the third inequality.
\end{proof}

The next theorem will be used  in an induction argument on $\ell$ in Section \ref{section-7}.

\begin{thm}\label{fq-thm}
Let $L\ge 2$ and $\delta_1 \in (0, 1/2]$.
Assume that $A\in \mathcal{M}(\lambda, \Gamma, M)$.
There exists $\e_1=\e_1 (L, \delta_1)>0$ such that if
$0< \e< \e_1r $,
$u_\e\in H^1(B(x_0, r))$,
$\mathcal{L}_\e (u_\e)=0$ in $B(x_0, r)$ for some $x_0\in \R^d$ and $r>0$,
\begin{equation}\label{d-20}
\wN(u_\e, x_0, r)\le L+1   \quad \text{ and } \quad
\wN(u_\e, x_0, r/2) \le \ell -\delta_1,
\end{equation}
where $\ell \in \mathbb{N}$ and $1\le \ell \le L$, 
then
\begin{equation}
\wN (u_\e, x_0, \delta_1 r/(8\ell) ) \le \ell-1 +\delta_1.
\end{equation}
\end{thm}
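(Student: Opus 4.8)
The plan is to combine the compactness argument behind Theorem~\ref{d1-thm} with the harmonic drop estimate of Lemma~\ref{h-drop-lemma}, passing between the doubling index $\wN$ and Almgren's frequency $N$ via \eqref{d-01}--\eqref{d-02}. By translation and dilation we may assume $x_0=0$, $r=1$, and $u_\e(0)=0$. First I would argue by contradiction: suppose there are sequences $\{\e_j\}$ with $\e_j\to 0$, $\{A^j\}\subset\mathcal{M}(\lambda,\Gamma,M)$, and solutions $u_{\e_j}$ of $\mathrm{div}(A^j(x/\e_j)\nabla u_{\e_j})=0$ in $B(0,1)$ satisfying \eqref{d-20} but with $\wN(u_{\e_j},0,\delta_1/(8\ell))>\ell-1+\delta_1$. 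Normalizing $\fint_{\partial B(0,1/2)}u_{\e_j}^2=1$ and using \eqref{d-20} together with \eqref{a4-0}, the family $\{u_{\e_j}\}$ is bounded in $L^2(B(0,1))$, so by Theorem~\ref{c-thm} (and the uniform convergence \eqref{co-1}--\eqref{co-3}) a subsequence converges, uniformly on $B(0,3/4)$ and weakly in $H^1(B(0,3/4))$, to a non-constant harmonic function $u_0$ in $B(0,1)$ with $u_0(0)=0$.

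Next I would transfer the hypotheses and the negated conclusion to $u_0$. Weak $H^1$ convergence gives $\fint_{\partial B(0,s)}u_{\e_j}^2\to\fint_{\partial B(0,s)}u_0^2$ for $0<s\le 3/4$, hence $\wN(u_0,0,1/2)\le\ell-\delta_1$ and $\wN(u_0,0,\delta_1/(8\ell))\ge\ell-1+\delta_1$. For a harmonic function we may apply \eqref{d-02}: from $\wN(u_0,0,1/2)\le\ell-\delta_1$ we get $N(u_0,0,1/4)\le\ell-\delta_1$, so Lemma~\ref{h-drop-lemma} (with $r=1/4$, $\delta=\delta_1$) yields $N(u_0,0,t/4)\le\ell-1+\delta_1$ for all $t\in(0,\delta_1\ell^{-1}]$; combined with the monotonicity of $N$ and \eqref{d-01}, this forces $\wN(u_0,0,\rho)\le\ell-1+\delta_1$ for all $\rho$ slightly smaller than $\delta_1/(4\ell)$, and in particular at $\rho=\delta_1/(8\ell)$ (the factor $8\ell$ versus $4\ell$ is precisely the room needed because $\wN$ averages $N$ over a dyadic annulus $[\rho/2,\rho]$, so one wants $\rho\le\delta_1/(4\ell)$ to keep the whole annulus in the good range). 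This contradicts $\wN(u_0,0,\delta_1/(8\ell))\ge\ell-1+\delta_1$, and $\e_1=\e_1(L,\delta_1)$ is the threshold coming from the compactness step.

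The step I expect to require the most care is the book-keeping of radii and the passage $\wN\leftrightarrow N$ for $u_0$, and in particular making sure the drop happens before the Almgren frequency of the limit can ``recover'': Lemma~\ref{h-drop-lemma} only bounds $N$ on a short interval $(0,\delta_1\ell^{-1}\cdot r]$, and $N$ for a non-constant harmonic function is nondecreasing but can jump back up above $\ell-1+\delta_1$ at larger scales, so I must be sure that the target radius $\delta_1/(8\ell)$ lies inside the window where the drop is valid, i.e. that $\delta_1/(4\ell)\le\delta_1\ell^{-1}\cdot(1/4)$, which holds with equality, hence the strict inequality $\delta_1/(8\ell)<\delta_1/(4\ell)$ gives the needed slack. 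A secondary point is that the negated conclusion $\wN(u_{\e_j},0,\delta_1/(8\ell))>\ell-1+\delta_1$ must survive the limit as a non-strict inequality $\ge\ell-1+\delta_1$, which is what creates the contradiction with the strict gap produced by Lemma~\ref{h-drop-lemma} (there $N(u_0,0,t/4)\le\ell-1+\delta_1$ only, so one actually needs to run Lemma~\ref{h-drop-lemma} with a slightly smaller $\delta$, say $\delta_1/2$, or observe that the chain of inequalities in its proof is strict for non-constant $u_0$; either fix is routine). The remaining ingredients — boundedness in $L^2$, existence of the harmonic limit, uniform convergence — are exactly as in the proof of Theorem~\ref{d1-thm}.
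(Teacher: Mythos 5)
Your argument is correct in substance, but it follows a genuinely different route from the paper. You prove the statement by a qualitative compactness/contradiction argument in the spirit of Theorem \ref{d1-thm}: extract a harmonic limit $u_0$ via Theorem \ref{c-thm} and Remark \ref{re-com}, transfer the hypotheses and the negated conclusion to $u_0$, and contradict Lemma \ref{h-drop-lemma}. The paper instead argues quantitatively: it builds the harmonic approximant $u_0$ of Theorem \ref{a3-thm} for the given $u_\e$, transfers $\wN(u_\e,0,1/2)\le \ell-\delta_1$ to $\wN(u_0,0,1/2)\le\ell-\delta_1/2$ via Lemma \ref{lemma-d1} (error $C\sqrt{\e}$), applies Lemma \ref{h-drop-lemma} with $\delta=\delta_1/2$ to reach $N(u_0,0,\delta_1/(8\ell))\le\ell-1+\delta_1/2$, and then --- this is the step your route avoids entirely --- invokes Theorem \ref{d1-thm} with $\delta_0=1/2$ on the dyadic scales between $1/2$ and $\delta_1/(8\ell)$ to bound $\|u_\e\|_{L^2(\partial B(0,1/2))}$ by $C(L,\delta_1)\|u_\e\|_{L^2(\partial B(0,\delta_1/(8\ell)))}$, so that the $L^\infty$ approximation error can be renormalized at the small scale and Lemma \ref{lemma-d1} applied a second time to return to $u_\e$ there. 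Your compactness proof is shorter and dispenses with this renormalization bookkeeping, at the price of being non-quantitative in $\e$ (no rate, whereas the paper's proof shows the threshold can be taken with explicit $C\sqrt{\e}$-type errors) and of leaning on the compactness of $\mathcal{M}(\lambda,\Gamma,M)$ under homogenization limits with varying $A^j$; both ingredients are indeed available in the paper, so the route is legitimate. Two small points of care: the negated conclusion survives the limit only as the non-strict inequality $\wN(u_0,0,\delta_1/(8\ell))\ge\ell-1+\delta_1$, so, as you yourself note, you must run Lemma \ref{h-drop-lemma} with $\delta=\delta_1/2$ (its hypothesis still holds since $\ell-\delta_1\le\ell-\delta_1/2$, and the window $t\le\delta_1/(2\ell)$ with $r=1/4$ lands exactly at the radius $\delta_1/(8\ell)$, giving the strict gap $\ell-1+\delta_1/2<\ell-1+\delta_1$); this is in fact the same choice of constants the paper makes, and it is the real source of the factor $8\ell$, rather than the $\wN$-averaging slack you describe in the main text. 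Second, since $\ell$ may vary along the contradicting sequence, pass to a subsequence on which $\ell$ is constant (there are only finitely many values $1\le\ell\le L$); with these adjustments your proof goes through.
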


\begin{proof}
By translation and dilation it suffices to consider the case $x_0=0$ and $r=1$.
We may also assume $u_\e(0)=0$.
Let $u_0$ be a harmonic function in $B(0, 7/8)$, given by Theorem \ref{a3-thm} with $r=1$.
By Theorem \ref{a3-thm} we have
$$
\aligned
\| u_\e -u_0 \|_{L^\infty(B(0, 1/2))} 
& \le C\sqrt{\e}  \| u_\e \|_{L^2(\partial B(0, 1))}\\
& \le C \sqrt{\e} \| u_\e \|_{L^2(\partial B(0, 1/2))},
\endaligned
$$
where $C$ depends on $L$ and
we have used the condition  $\wN(u_\e, 0, 1) \le L+1$.
By Lemma \ref{lemma-d1}, if $0< \e< \e_1(L)$,
$$
\aligned
\wN(u_0, 0, 1/2)  & \le \wN(u_\e, 0, 1/2)  + C \sqrt{\e} \\
&\le \ell -\delta_1 +C \sqrt{\e} \\
&\le  \ell - {\delta_1/}{2},
\endaligned
$$
where we have assumed $C\sqrt{\e}\le \delta_1/2$.
It follows from \eqref{d-02} that 
$$
N(u_0, 0, 1/4) \le \ell -\delta_1/2.
$$
Since $u_0$ is harmonic, by Lemma \ref{h-drop-lemma},
$$
N(u_0, 0, \delta_1/(8\ell)) \le \ell -1 +\delta_1/2.
$$

Next, let $2^{-J-1} < \delta_1\ell^{-1}  \le 2^{-J}$ for some $J \ge 1$.
Suppose 
$$
0< \e < \delta_1 \e_0/(8\ell)\le \e_0 2^{-J -3},
$$
 where $\e_0>0$ is given by Theorem \ref{d1-thm} with $\delta_0=1/2$.
It follows that 
$$
\wN(u_\e, 0, 2^{-j}) \le \ell +1/2 \quad \text{ for } j=1, 2, \dots,  J+5.
$$
This implies that
$$
\aligned
\left(\fint_{\partial B(0,1/2 )} u_\e^2 \right)^{1/2}
&=   \prod_{j=1}^{J+5} 2^{\wN(u_\e, 0, 2^{-j})}
\left(\fint_{\partial B(0, 2^{-J-6} )} u_\e^2 \right)^{1/2} \\
&\le C 2^{(J+5)(\ell+1)} \left(\fint_{\partial B(0, \delta_1/(8\ell))} u_\e^2 \right)^{1/2}.
\endaligned
$$
Hence,
$$
\aligned
\| u_\e -u_0 \|_{L^\infty(B(0, \delta_1/(8\ell)))}
 & \le C\sqrt{ \e }  \| u_\e \|_{L^2(\partial B(0, 1/2))}\\
& \le C\sqrt{ \e } \left(\fint_{\partial B(0, \delta_1/(8\ell))} u_\e^2 \right)^{1/2},
\endaligned
$$
where $C$ depends on $L$ and $\delta_1$.
As a result, 
we may use Lemma \ref{lemma-d1} to conclude that
$$
\aligned
\wN (u_\e, 0, \delta_1/(8\ell))
 & \le \wN (u_0, 0, \delta_1/(8\ell)) + C\sqrt{ \e} \\
 &\le N(u_0, 0, \delta_1/(8\ell) ) +C \sqrt{\e} \\
& \le \ell -1 + ({\delta_1}/{2} )+ C\sqrt{ \e} \\
&\le \ell -1 +\delta_1,
\endaligned
$$
where we have assumed  $C\sqrt{ \e } \le \delta_1/2$ for the last step.
\end{proof}

In the next theorem we will assume that the periodic matrix  $I+\nabla \chi$ is nonsingular  and satisfies \eqref{inv-0}
for some $\mu>0$.
Define
\begin{equation}\label{A}
\mathcal{A}(\lambda, \Gamma, M, \mu)
=\Big\{ A=A(y): A \text{ satisfies \eqref{ellipticity}, \eqref{periodicity}, \eqref{smoothness},  \eqref{inv-0}   and }
\widehat{A}+\widehat{A}^T =2I  \Big\}.
\end{equation}

\begin{thm}\label{low-L}
Let $L\ge 2$ and  $A\in \mathcal{A} (\lambda, \Gamma, M, \mu)$.
There exists $\e_0=\e_0(L)>0$ such that if 
 $u_\e\in H^1(B(0, 1))$ is  a non-constant solution of $\mathcal{L}_\e(u_\e)=0$ in $B(0, 1)$ for some $0< \e< \e_0$, $\wN(u_\e, 0, 1) \le L$, and
\begin{equation}
\wN(u_\e, 0, 1/2)\le 3/2,
\end{equation}
then $|\nabla u_\e (0)|\neq 0$.
\end{thm}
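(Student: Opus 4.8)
The hypothesis $\wN(u_\e,0,1/2)\le 3/2$ should force the harmonic approximation of $u_\e$ at scale $1$ to vanish at the origin to order exactly one, hence to have a nonvanishing gradient there, and the nonsingularity hypothesis \eqref{inv-0} then transfers the nondegeneracy to $\nabla u_\e(0)$ itself. Since neither $\wN(u_\e,0,\cdot)$ nor $\nabla u_\e(0)$ changes when a constant is added to $u_\e$, the plan is to assume $u_\e(0)=0$ and rescale so that $\fint_{\partial B(0,1)}u_\e^2=1$; then $\wN(u_\e,0,1)\le L$ gives $\fint_{\partial B(0,1/2)}u_\e^2=4^{-\wN(u_\e,0,1)}\ge 4^{-L}$. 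Apply Theorem \ref{a3-thm} with $r=1$ to obtain a harmonic $u_0$ on $B(0,7/8)$ with $u_0(0)=0$, $\|u_0\|_{L^\infty(B(0,7/8))}\le C$, $\|u_\e-u_0\|_{L^\infty(B(0,3/4))}\le C\sqrt{\e}$, and, crucially,
$$
\|\nabla u_\e-(I+\nabla\chi(x/\e))\nabla u_0\|_{L^\infty(B(0,3/4))}\le C\sqrt{\e}.
$$

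Next I would pass from $u_\e$ to $u_0$ at the level of the doubling index. Since $\|u_\e-u_0\|_{L^\infty(B(0,3/4))}\le C\,2^{L}\sqrt{\e}\,\big(\fint_{\partial B(0,1/2)}u_\e^2\big)^{1/2}$, Lemma \ref{lemma-d1} at $r=1/2$ (legitimate because $\wN(u_\e,0,1/2)\le 3/2\le L$) gives $\wN(u_0,0,1/2)\le\wN(u_\e,0,1/2)+C\sqrt{\e}\le 3/2+C\sqrt{\e}$, while $\big(\fint_{\partial B(0,1/2)}u_0^2\big)^{1/2}\ge 2^{-L}-C\sqrt{\e}>0$, so $u_0$ is non-constant. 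Using \eqref{d-02} and the monotonicity of Almgren's frequency, $N(u_0,0,r)\le N(u_0,0,1/4)\le\wN(u_0,0,1/2)\le 3/2+C\sqrt{\e}<2$ for all $0<r\le 1/4$, once $\e_0(L)$ is small. Then I would make this quantitative exactly as in the proof of Lemma \ref{h-drop-lemma}: writing $u_0(r\omega)=\sum_{k\ge1}a_k r^k\psi_k(\omega)$ with $\{\psi_k\}$ orthonormal so that $\fint_{\partial B(0,r)}u_0^2$ is a dimensional multiple of $\sum_k a_k^2 r^{2k}$ and $N(u_0,0,r)=(\sum_k k a_k^2 r^{2k})/(\sum_k a_k^2 r^{2k})$, the bound $N(u_0,0,1/4)\le 7/4$ yields $\sum_{k\ge2}a_k^2(1/4)^{2k}\le\tfrac{3}{16}a_1^2$, hence $\fint_{\partial B(0,1/4)}u_0^2\le c_d\,a_1^2$; combined with $\fint_{\partial B(0,1/4)}u_0^2=4^{-\wN(u_0,0,1/2)}\fint_{\partial B(0,1/2)}u_0^2\ge 4^{-2}(2^{-L}-C\sqrt{\e})^2\ge c(L)>0$ this gives $a_1^2\ge c(L)$, i.e. $|\nabla u_0(0)|^2=\kappa_d a_1^2\ge c(L)>0$, since the degree‑one part of $u_0$ is the linear function $x\mapsto\nabla u_0(0)\cdot x$.

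Finally I would transfer this lower bound to $u_\e$. As $A$ is Lipschitz, $\nabla\chi$ is continuous and bounded, so \eqref{inv-0} forces the smallest singular value of $I+\nabla\chi(0)$ to be bounded below by some $c_0=c_0(d,\lambda,\Gamma,M,\mu)>0$; therefore
$$
|\nabla u_\e(0)|\ge\big|(I+\nabla\chi(0))\nabla u_0(0)\big|-C\sqrt{\e}\ge c_0|\nabla u_0(0)|-C\sqrt{\e}\ge c(L)-C\sqrt{\e}>0
$$
for $0<\e<\e_0(L)$ with $\e_0(L)$ chosen small. The main obstacle here is bookkeeping rather than depth: one must arrange that every error — in Theorem \ref{a3-thm}, in the stability Lemma \ref{lemma-d1}, and in the frequency bound — is absorbed by a threshold depending only on $L$, so that $\e_0$ depends only on $L$; and one must genuinely invoke \eqref{inv-0} at the last step, since the example $u_\e=\langle\xi,x+\e\chi(x/\e)\rangle$ (frequency $1$, critical points on a periodic lattice) shows the statement fails without it. Alternatively, the theorem can be obtained by a compactness argument in the spirit of Theorem \ref{d1-thm}: if it failed, normalized non-constant solutions $u_{\e_j}$ with $A^j\in\mathcal{A}(\lambda,\Gamma,M,\mu)$, $\e_j\to 0$ and $\nabla u_{\e_j}(0)=0$ would, by Theorem \ref{c-thm} and \eqref{co-1}--\eqref{co-4}, converge to a non-constant harmonic $u_0$ with $u_0(0)=0$, $\wN(u_0,0,1/2)\le 3/2$ and $\nabla u_0(0)=0$ (using $\det(I+\nabla\chi^j)\ge\mu$ and boundedness of $(I+\nabla\chi^j)^{-1}$), which is impossible because such a $u_0$ vanishes at $0$ to order $\ge 2$, forcing $N(u_0,0,1/4)\ge 2$.
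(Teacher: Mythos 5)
Your main argument is correct, but it takes a genuinely different route from the paper. The paper proves Theorem \ref{low-L} by the compactness/contradiction argument you only sketch at the end: a normalized sequence $u_{\e_j}$ with $\nabla u_{\e_j}(0)=0$ converges, via Theorem \ref{c-thm} and the strengthened convergence \eqref{co-3}--\eqref{co-4} of Remark \ref{re-com}, to a non-constant harmonic $u_0$ with $u_0(0)=0$ and $\wN(u_0,0,1/2)\le 3/2$, whence $\lim_{r\to 0}\wN(u_0,0,r)=1$ and $\nabla u_0(0)\neq 0$, contradicting $\nabla u_{\e_j}(0)-(I+\nabla\chi^j(0))\nabla u_0(0)\to 0$ together with $|(I+\nabla\chi^j)^{-1}|\le C$. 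Your primary line is instead direct and quantitative: normalize, apply Theorem \ref{a3-thm} at scale $1$, transfer the doubling bound to $u_0$ by Lemma \ref{lemma-d1}, use \eqref{d-02} and the spherical-harmonic expansion (as in the proof of Lemma \ref{h-drop-lemma}) to get the coefficient bound $a_1^2\ge c(L)>0$, i.e.\ $|\nabla u_0(0)|\ge c(L)$, and then use \eqref{inv-0} (with boundedness of $\nabla\chi$, so the smallest singular value of $I+\nabla\chi$ is bounded below) and \eqref{a3-1} to conclude $|\nabla u_\e(0)|\ge c(L)-C\sqrt{\e}>0$. The arithmetic checks out: $N(u_0,0,1/4)\le \wN(u_0,0,1/2)\le 3/2+C\sqrt{\e}<7/4$ does give $\sum_{k\ge 2}a_k^2 4^{-2k}\le \tfrac{3}{16}a_1^2$, and the lower bound on $\fint_{\partial B(0,1/4)}u_0^2$ follows from the normalization and $\wN(u_\e,0,1)\le L$. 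What each approach buys: yours yields an effective statement, namely an explicit lower bound on $|\nabla u_\e(0)|$ in terms of $L$ and the $L^2$ normalization, and in principle an explicit $\e_0(L)$, at the cost of the error bookkeeping you acknowledge (including checking $\|u_0\|_{L^2(\partial B(0,1/4))}\neq 0$ before invoking Lemma \ref{lemma-d1}); the paper's compactness argument is shorter and needs no quantitative tracking, but gives no effective threshold or lower bound. You also correctly identify that \eqref{inv-0} is exactly where the argument would otherwise fail, consistent with the counterexample $u_\e=\langle \xi, x+\e\chi(x/\e)\rangle$ discussed at the end of Section \ref{section-2}.
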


\begin{proof}
We argue by contradiction.
Suppose there exists a sequence $\{ u_{\e_j} \} \subset H^1(B(0, 1))$ of non-constant  solutions to 
$\text{\rm div}(A^j(x/\e_j) \nabla u_{\e_j})=0$ in $B(0, 1)$ such that  $\e_j \to0 $,  $\wN(u_{\e_j} , 0, 1) \le L$, $\wN(u_{\e_j}, 0, 1/2) \le 3/2$, and
$\nabla u_{\e_j} (0)=0$, 
where $A^j \in \mathcal{A}(\lambda, \Gamma, M, \mu)$.
Without loss of generality we may assume $u_{\e_j} (0)=0$ and $\| u_{\e_j} \|_{L^2(\partial B(0, 1/2))}=1$.
Since $\wN(u_{\e_j} , 0, 1)\le L $, this implies that $\{ u_{\e_j} \}$ is bounded in $L^2(\partial B(0, 1))$ and thus bounded in $L^2(B(0, 1))$, by \eqref{a4-0}.
It follows by Theorem \ref{c-thm} and Remark \ref{re-com}  that there exists a subsequence, still denoted by $\{ u_{\e_j}\}$, and a harmonic function $u_0$ in $B(0, 1)$,
such that  $u_{\e_j}  \to u_0$ weakly in $L^2(B(0, 1))$ and that \eqref{co-3} and \eqref{co-4} hold for any $0< r<1$.

Next, since $u_{\e_j} \to u_0$ uniformly in $B(0, 3/4)$, we see that 
$$
\|u_0\|_{L^2(\partial B(0, 1/2))} =1 \quad \text{ and } \quad
\wN(u_0, 0, 1/2)\le 3/2.
$$
Also observe that $u_0 (0)=\lim u_{\e_j} (0)=0$.
As a result, by the monotonicity of $\wN(u_0, 0, r)$, we may conclude that 
$\lim_{ r \to 0} \wN(u_0, 0, r)=1$ and  that $|\nabla u_0 (0)|\neq 0$.
However, by \eqref{co-4}, 
$$
\nabla u_{\e_j} (0) - (I+\nabla \chi^j (0)) \nabla u_0(0) \to 0,
$$
as $\e_j \to 0$, where $\chi^j $ is the first-order corrector for the matrix $A^j$. 
We obtain a contradiction since $\nabla u_{\e_j} (0)=0$ and $| (I+\nabla \chi^j)^{-1}(0) | \le C$.
\end{proof}


\section{Estimates of turning, part I}\label{section-4}

Throughout this section we assume $A\in \mathcal{A}(\lambda, \Gamma, M, \mu)$.
For a function $f=f(\omega)$ in $L^2(\Sd)$, we use $P_k(f)$ to denote  the homogeneous harmonic polynomial of degree $k$ whose restriction on $\Sd$ is
the projection  of $f$ onto the subspace of  spherical harmonics of order $k$.
That is, $P_k(f) (r \omega) = r^k P_k (f)(\omega)$ for $r>0$ and $\omega \in \Sd$, and $P_k(f)(\omega)$ is the projection of $f$ onto the subspace of
spherical harmonics of order $k$.

If $f$ is defined in $B(0, r_0)$ and $0< r< r_0$, we will use the notation $f(r, \omega) = f(r\omega)$.
Thus, for a fixed  $r$,
 $P_k (f(r, \cdot ))=P_k (g)$ and $\| f(r, \cdot) \|=\| g\|$, where $g(\omega) = f(r\omega)$ and  $\| \cdot \|$  denotes the norm in $L^2(\Sd)$.

\begin{lemma}\label{h-1-lemma}
Let $u\in H^1(B(0, 1))$ be a non-constant harmonic function  in $B(0, 1)$.
Assume that  $u(0)=0$ and
\begin{equation}\label{h-1-0}
\ell -(1/32)\le N(u, 0, 1/2) \le N(u, 0, 1) \le \ell +(1/32)
\end{equation}
for some $\ell \in \mathbb{N}$. 
Let
$$
\eta=N(u, 0, 1)-N(u, 0, 1/2).
$$
Then 
\begin{equation}\label{h-1-1}
a^2_\ell \ge (1-3 \eta) \| u\|^2,
\end{equation}
\begin{equation}\label{h-1-2}
\sum_{j=0}^{\ell-1} a_j^2 2 ^{-2j} \le 2 \eta 2^{-2\ell} \| u\|^2
\quad \text{ and } \quad
\sum_{j=\ell+1}^\infty a_j^2\le 2 \eta \| u\|^2,
\end{equation}
where $u(\omega)=\sum_{j=0}^\infty a_j \psi_j (\omega)$  for $\omega \in \Sd$,
 $\psi_j (\omega)$ is a spherical harmonic of order $j$ with 
$\| \psi_j \|=1$.
\end{lemma}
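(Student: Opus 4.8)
Since the statement is homogeneous I would first normalize $\|u\|=1$. Writing the spherical harmonic expansion $u(\omega)=\sum_{j\ge 0}a_j\psi_j(\omega)$ with $\|\psi_j\|=1$ and setting $b_j:=a_j^2$, this means $\sum_j b_j=1$, while $u(0)=0$ forces $b_0=0$. For the harmonic function $u$ one has the standard formula $N(u,0,r)=\big(\sum_j j\,b_j r^{2j}\big)\big/\big(\sum_j b_j r^{2j}\big)$, so \eqref{h-1-0} says exactly that $|N(u,0,1)-\ell|\le \tfrac1{32}$ and $|N(u,0,1/2)-\ell|\le \tfrac1{32}$, whence $\eta=N(u,0,1)-N(u,0,1/2)\in[0,\tfrac1{16}]$ by monotonicity. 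The lemma is thus reduced to three elementary inequalities for the nonnegative sequence $(b_j)$: namely $b_\ell\ge 1-3\eta$, and $\sum_{j<\ell}b_j\,4^{\,\ell-j}\le 2\eta$, and $\sum_{j>\ell}b_j\le 2\eta$.

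The main device is the rescaled height $G(r):=r^{-2\ell}\sum_j b_j r^{2j}=\sum_j b_j r^{2(j-\ell)}$. Differentiating term by term, $G''(r)=\sum_j 2(j-\ell)\big(2(j-\ell)-1\big)b_j r^{2(j-\ell)-2}$, and every coefficient $2m(2m-1)$ with $m=j-\ell\in\mathbb Z$ is $\ge 0$; hence $G$ is convex on $(0,\infty)$. Moreover $G(1)=1$, $G(1/2)=\sum_j b_j 4^{\,\ell-j}$, $G'(1)=2\big(N(u,0,1)-\ell\big)$, and $G'(1/2)=4\big(N(u,0,1/2)-\ell\big)G(1/2)$. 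Comparing, by convexity, the chord slope $2\big(G(1)-G(1/2)\big)$ with $G'(1/2)\ge -\tfrac18 G(1/2)$ yields the a priori bound $G(1/2)\le \tfrac{16}{15}$. From this I would deduce a crude lower bound on $b_\ell$: writing $A_-=\sum_{j<\ell}b_j$, $A_+=\sum_{j>\ell}b_j$, $B_-=\sum_{j<\ell}(\ell-j)b_j$, the identity $\sum_j(j-\ell)b_j=N(u,0,1)-\ell$ gives $A_+\le B_-+\tfrac1{32}$, while $4^{\,\ell-j}\ge 4$ and $\ell-j\le \tfrac14 4^{\,\ell-j}$ for $j<\ell$ give $A_-,B_-\le \tfrac14\big(G(1/2)-b_\ell\big)$; combining these with $G(1/2)\le \tfrac{16}{15}$ forces $b_\ell\ge \tfrac34$ (indeed a little more).

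The heart of the argument is an exact identity for the frequency drop. Clearing the denominator in $\eta=N(u,0,1)-N(u,0,1/2)$ (using $\sum_k b_k=1$) and symmetrizing the summation indices gives
\[
\eta\,G(1/2)=\tfrac12\sum_{j,k}b_j b_k\,\big(4^{\,\ell-j}-4^{\,\ell-k}\big)(k-j),
\]
in which, crucially, every summand is $\ge 0$. Keeping only the terms in which one index equals $\ell$ and collecting the two symmetric contributions for each $m\ne\ell$, one obtains
\[
\eta\,G(1/2)\ \ge\ b_\ell\sum_{m\ne\ell}b_m\,(4^{\,\ell-m}-1)(\ell-m).
\]
It remains to use the elementary facts that, for a nonzero integer $s$, $(4^s-1)s\ge \tfrac34$ always, $(4^s-1)s\ge \tfrac34\,4^{\,s}$ when $s\ge 1$, and $(4^s-1)s\ge \tfrac34$ when $s\le -1$. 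Applying the first to all $m\ne\ell$ gives $\tfrac34(1-b_\ell)\le \eta\,G(1/2)/b_\ell$; restricting to $m<\ell$ and using the second gives $\tfrac34\sum_{j<\ell}b_j\,4^{\,\ell-j}\le \eta\,G(1/2)/b_\ell$; restricting to $m>\ell$ and using the third gives $\tfrac34\sum_{j>\ell}b_j\le \eta\,G(1/2)/b_\ell$. Since $G(1/2)\le \tfrac{16}{15}$ and $b_\ell\ge \tfrac34$, the factor $\tfrac43\,G(1/2)/b_\ell$ is a constant strictly below $2$, and the three displays become precisely \eqref{h-1-1} and the two halves of \eqref{h-1-2} once the normalization is undone.

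I expect the only delicate point to be the second paragraph: keeping the a priori constants ($G(1/2)\le \tfrac{16}{15}$ and then $b_\ell\ge\tfrac34$) sharp enough that the final multiplicative constant stays below $2$; everything else is bookkeeping with the manifestly nonnegative quadratic form of the third paragraph. A minor technical point is that the series for $G$ and $G'$ converge up to $r=1$ because $N(u,0,1)<\infty$, so the endpoint differentiations (interpreted as one-sided) are legitimate, and $G(1/2)>0$, $b_\ell>0$ so the divisions are permitted.
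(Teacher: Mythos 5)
Your proof is correct, and it takes a genuinely different route from the paper's. The paper derives its key identity from the Weiss monotonicity formula: integrating $\frac{d}{dr}W_\kappa(u,r)\ge 0$ over $[1/2,1]$ with the special choice $\kappa=N(u,0,1/2)$ yields the linear identity $\sum_j a_j^2\,(j-\kappa)\bigl(1-2^{2(\kappa-j)}\bigr)=\eta\,\|u\|^2$, after which the three estimates follow by splitting into $j\le \ell-1$ and $j\ge \ell+1$ and using $|\kappa-\ell|\le 1/32$ (the factor $7/8$ there plays the role of your $3/4$). You avoid the monotonicity formula altogether: your symmetrized bilinear identity $\eta\,G(1/2)=\tfrac12\sum_{j,k}b_jb_k\bigl(4^{\ell-j}-4^{\ell-k}\bigr)(k-j)$, centered at the integer $\ell$ rather than at $\kappa$, has manifestly nonnegative summands, and retaining only the terms with one index equal to $\ell$ gives the same three bounds; the price is the extra factor $G(1/2)/b_\ell$, which you control by the convexity bound $G(1/2)\le 16/15$ and the crude bound on $b_\ell$. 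I checked your bookkeeping: $A_+\le B_-+1/32$ and $A_-,B_-\le\tfrac14\bigl(G(1/2)-b_\ell\bigr)$ indeed give $b_\ell\ge 209/240$, so $\tfrac43\,G(1/2)/b_\ell<2$ and all three conclusions follow, in fact with the constant $2$ in place of $3$ in \eqref{h-1-1}. Both arguments are ultimately coefficient computations in the spherical-harmonic expansion; the paper's is shorter because the choice $\kappa=N(u,0,1/2)$ makes the right-hand side exactly $\eta\|u\|^2$ with no auxiliary normalization, while yours is self-contained (no Weiss-type formula), relies only on the elementary convexity of $G$, and yields marginally sharper constants. The endpoint issues you flag (convergence of $\sum_j j\,b_j$ and continuity of $G$ up to $r=1$) are the same ones implicitly used in the paper when evaluating $N(u,0,1)$, and are harmless since $u\in H^1(B(0,1))$.
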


\begin{proof}
A similar result was proved in \cite[Lemma 3.18]{Naber-2017}.
We provide a more transparent proof here.
Without loss of generality we may assume   $\| u\|=1$.
By Weiss type monotonicity formulas for harmonic functions (see \cite{Weiss} for $\kappa=2$ and \cite{Garofalo-2009} for the general case),
\begin{equation}\label{weiss}
\frac{d}{dr} \big\{ W_\kappa (u, r)\big\}
=\frac{2}{r^{d+2\kappa}}
\int_{\partial B(0, r)} (x\cdot \nabla u - \kappa u)^2 \ge 0,
\end{equation}
where $\kappa>0$ and 
$$
\aligned
W_\kappa (u, r)
 & =\frac{1}{r^{d-2 +2\kappa} }\int_{B(0, r)} |\nabla u|^2 -\frac{\kappa}{r^{d-1+2\kappa}} \int_{\partial B(0, r)} u^2\\
 &= \left\{ N(u, 0, r) -\kappa \right\} \cdot \frac{1}{r^{d-1+2\kappa}} \int_{\partial B(0, r)} u^2.
 \endaligned
$$
Let $\kappa =N(u, 0, 1/2)$.
By integrating both sides of \eqref{weiss} in $r$  from $1/2$ to $1$, we obtain 
\begin{equation}\label{w-1}
2\int_{B(0, 1)\setminus B(0, 1/2)}
\frac{ ( r \partial_r u - \kappa u)^2}{|x|^{2\kappa}} \frac{dx}{|x|^d}
= \big\{ N(u, 0, 1) - \kappa \big\} \int_{\partial B(0, 1)} u^2.
\end{equation}
Now, write $u(\omega) =\sum_{j=0}^\infty a_j \psi_j (\omega)$, where $\psi_j (\omega)$ is a spherical harmonic of degree $j$ with
$\|\psi_j \|=1$. Then 
$$
\sum_j a_j^2 =\| u\|^2 =1.
$$
Moreover, since $u(x)= u(r\omega)=\sum_j a_j r^j \psi_j (\omega)$,  it follows from \eqref{w-1} by a direct computation  that
\begin{equation}\label{h-1-3}
\sum_j a_j^2 | j -\kappa| | 1- 2^{2(\kappa-j)} |
=N(u, 0, 1) -N(u, 0, 1/2)=\eta.
\end{equation}
Note that if  $j \ge \ell+1$, then $j \ge \kappa + 7/8$ and
$$
|1-2^{2(\kappa-j)}|
\ge 7/8.
$$
By \eqref{h-1-3} we see that
\begin{equation}\label{h-1-4}
 \sum_{j \ge \ell+1} a_j^2 \le  (8/7)^2
\sum_{j\ge \ell+1 } a_j^2  |j-\kappa|  |1-2^{(\kappa-j)} |  \le 2 \eta.
\end{equation}
In the case $j\le \ell -1$, we have $j \le \kappa -7/8$ and 
$$
|2^{2(\kappa-j)}-1|\ge(7/8) 2^{2(\kappa-j)} .
$$
Again, by \eqref{h-1-3}, we obtain 
\begin{equation}\label{h-1-5}
\aligned
\sum_{j\le \ell -1} a_j^2 2^{-2j}
 & \le (8/7)^2  2^{-2\kappa} \sum_{j \le \ell-1} a_j^2 |j-k| | 2^{2(\kappa -j)} -1|\\
&  \le (8/7)^2 2^{-2\ell } \eta 2^{-2(\kappa -\ell)}\\
&  \le 2 \eta 2^{-2 \ell},
\endaligned
\end{equation}
where we have used the fact $|\kappa -\ell|<  (1/32)$.
In particular, this gives
\begin{equation}\label{h-1-6}
\sum_{j \le \ell-1} a_j^2 \le \eta.
\end{equation}
Since $\sum_j a_j^2=1$,  in view of \eqref{h-1-4} and \eqref{h-1-6}, we obtain  $a_\ell^2 \ge 1-3 \eta$.
  The estimates in   \eqref{h-1-2} are contained in \eqref{h-1-4} and \eqref{h-1-5}.
\end{proof}

\begin{remark}\label{remark-p1}
Let $u$ be  the same  as in Lemma \ref{h-1-lemma}.
It follows from \eqref{h-1-1} that 
\begin{equation}\label{r-p-1}
\| u  -P_\ell (u)\| \le  C\sqrt{\eta} \| u\|.
\end{equation}
By  interior estimates for harmonic functions,
$$
|\nabla u  (x) -\nabla  P_\ell (u) |
\le C\sqrt{\eta}  \| u\|.
$$
for any $x\in B(0, 3/4)$.
Hence,
$$
\aligned
|\nabla u(x)|
 & \ge |\nabla  P_\ell (u)| - C \sqrt{\eta}  \| u\|\\
 & \ge c_0 \{ |\nabla_\omega P_\ell (u) (\omega) | + |P_\ell (u) (\omega) | \}  - C_0   \sqrt{\eta} \| u\|,
 \endaligned
$$
 where $|x|=1/2$ and  $\omega =x/|x|$. 
Since $\| u(1/2, \cdot )\|\sim 2^{-\ell } \| u\|$ and $P_\ell (u(1/2, \cdot )) = 2^{-\ell} P_\ell (u)$, we obtain
\begin{equation}\label{r-p-2}
\frac{|\nabla u (1/2, \omega)| }{\| u(1/2, \cdot )\|} 
\ge \frac{c_0 \{ |\nabla_\omega P_\ell (u(1/2, \cdot ))| +| P_\ell (u(1/2, \cdot ))| \}} { \| u(1/2, \cdot ) \|}
- C_0 \sqrt{\eta}
\end{equation}
for any $\omega \in \Sd$, 
where $c_0, C_0>0$ depend on $\ell$.
\end{remark}

\begin{remark}\label{remark-p10}
Let $u$ be the same as in Lemma \ref{h-1-lemma}.
Consider the harmonic function 
$$
v(x)=u(x)-2^\ell u(x/2)=\sum_{k\neq \ell} a_k \psi_k (x) -\sum_{k \neq \ell} a_k 2^{\ell-k} \psi_k (x).
$$
It follows by Lemma \ref{h-1-lemma} that
$$
\| v\|^2 =\sum_{k \neq \ell} a_k^2 (1-2^{\ell-k})^2\le C \eta\| u\|^2.
$$
By interior estimates for harmonic functions,
$$
|\nabla v(x)| \le C \sqrt{\eta} \| u\|
$$ 
for any $x\in B(0, 3/4)$. This implies that
$$
2^{\ell-1} |\nabla u(x/2)| \ge |\nabla u(x)| -C \sqrt{\eta} \| u\|
$$
for any $x\in B(0, 3/4)$.
By induction we obtain 
$$
2^{j (\ell-1)} |\nabla u(\omega/2^{j+1})|
\ge |\nabla u (\omega/2)| - C(j, \ell) \sqrt{\eta} \| u(1/2, \cdot) \|
$$
for any $\omega \in \Sd$ and $j \ge 0$, where $C(j, \ell)$ depends on $j$ and $\ell$.
This, together with \eqref{r-p-2}, gives
\begin{equation}\label{r-p-20}
\frac{2^{j (\ell-1)} |\nabla u(1/2^{j+1}, \omega )|} { \|u(1/2, \cdot)\|}
\ge \frac{c_0 \{ |\nabla_\omega P_\ell (u(1/2, \cdot ))| +| P_\ell (u(1/2, \cdot ))| \}} { \| u(1/2, \cdot ) \|}
- C(j, \ell)\sqrt{\eta}
\end{equation}
for any $\omega \in \Sd$ and $j \ge 0$, 
where $c_0$ depends on $\ell$, and $C(j, \ell) $ on $j$ and $\ell$.
\end{remark}

\begin{lemma}\label{h-2-lemma}
Let $u$ be a harmonic function in $B(0, 1)$ satisfying the same conditions  as in Lemma \ref{h-1-lemma}.
Then
\begin{equation}\label{h-2-0}
\Big \| \frac{P_\ell (u )}{\| u \|} -\frac{P_{\ell} (u( 1/2, \cdot ))}{ \| u (1/2, \cdot )\| }\Big  \|
\le 8 \eta,
\end{equation}
where $\eta= N(u, 0, 1)-N(u, 0, 1/2)$.
\end{lemma}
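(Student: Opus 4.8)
The plan is to collapse the $L^2(\Sd)$-norm of the difference to a one-variable estimate and then feed in the three bounds of Lemma~\ref{h-1-lemma}. Since both terms on the left of \eqref{h-2-0} and $\eta$ are unchanged when $u$ is multiplied by a constant, we may normalize so that $\|u\|=1$; expand $u$ in spherical harmonics as in Lemma~\ref{h-1-lemma}, $u(r\omega)=\sum_{j\ge 0}a_j r^j\psi_j(\omega)$ with $\|\psi_j\|=1$ and $\sum_j a_j^2=1$, and (replacing $\psi_\ell$ by $-\psi_\ell$ if necessary) take $a_\ell\ge 0$. The key observation is that for every $r$ the order-$\ell$ part of $u(r,\cdot)$ is the fixed multiple $a_\ell r^\ell\psi_\ell$ of one and the same spherical harmonic, so $P_\ell(u)$ and $P_\ell(u(1/2,\cdot))$ are both proportional to $\psi_\ell$; since $\|\psi_\ell\|=1$ this gives
\[
\Big\|\frac{P_\ell(u)}{\|u\|}-\frac{P_\ell(u(1/2,\cdot))}{\|u(1/2,\cdot)\|}\Big\|
=\Big|a_\ell-\frac{a_\ell\,2^{-\ell}}{\|u(1/2,\cdot)\|}\Big|,
\qquad \|u(1/2,\cdot)\|^2=\sum_j a_j^2\,2^{-2j}.
\]

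Next I would factor out the degree-$\ell$ term: write $\|u(1/2,\cdot)\|^2=2^{-2\ell}\bigl(a_\ell^2+\tau\bigr)$ with $\tau=2^{2\ell}\sum_{j\neq\ell}a_j^2\,2^{-2j}\ge 0$. The first bound in \eqref{h-1-2} gives $\sum_{j\le\ell-1}a_j^2\,2^{-2j}\le 2\eta\,2^{-2\ell}$, and the second, together with $2^{-2j}\le 2^{-2(\ell+1)}$ for $j\ge\ell+1$, gives $\sum_{j\ge\ell+1}a_j^2\,2^{-2j}\le 2^{-2(\ell+1)}\cdot 2\eta=\tfrac12\eta\,2^{-2\ell}$; hence $0\le\tau\le\tfrac52\eta$. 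With this, $\dfrac{a_\ell\,2^{-\ell}}{\|u(1/2,\cdot)\|}=\dfrac{a_\ell}{\sqrt{a_\ell^2+\tau}}$, and so
\[
\Big|a_\ell-\frac{a_\ell}{\sqrt{a_\ell^2+\tau}}\Big|
=\frac{a_\ell}{\sqrt{a_\ell^2+\tau}}\,\bigl|\sqrt{a_\ell^2+\tau}-1\bigr|
\le \bigl|\sqrt{a_\ell^2+\tau}-1\bigr|
=\frac{\bigl|a_\ell^2+\tau-1\bigr|}{\sqrt{a_\ell^2+\tau}+1}
\le \bigl|a_\ell^2+\tau-1\bigr|,
\]
where I used $a_\ell\le\sqrt{a_\ell^2+\tau}$ (because $\tau\ge0$) and $\sqrt{a_\ell^2+\tau}+1\ge1$.

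Finally, \eqref{h-1-0} forces $\eta=N(u,0,1)-N(u,0,1/2)\le\tfrac1{16}$, while \eqref{h-1-1} gives $1-3\eta\le a_\ell^2\le 1$, the upper bound being immediate from $\sum_j a_j^2=1$; combined with $0\le\tau\le\tfrac52\eta$ this yields $-3\eta\le a_\ell^2+\tau-1\le\tfrac52\eta$, whence $|a_\ell^2+\tau-1|\le 3\eta\le 8\eta$, which is \eqref{h-2-0}. I do not expect a real obstacle here: the argument is a short computation on top of Lemma~\ref{h-1-lemma}, and the only step that needs a moment's thought is the opening reduction — that the order-$\ell$ projection of $u$ on every sphere points along the single spherical harmonic $\psi_\ell$, which is what turns the $L^2(\Sd)$-norm of the difference of normalized projections into the elementary scalar quantity bounded above. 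Everything after that is bookkeeping with \eqref{h-1-1}, \eqref{h-1-2}, and the inequality $|\sqrt t-1|\le|t-1|$ for $t\ge0$. Incidentally, this argument yields the sharper bound $3\eta$; the stated constant $8$ leaves comfortable room for the subsequent use of the lemma.
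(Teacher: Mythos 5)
Your argument is correct and follows essentially the same route as the paper: after normalizing $\|u\|=1$ and expanding in spherical harmonics, both proofs reduce the left-hand side of \eqref{h-2-0} to the scalar quantity $\bigl|a_\ell - a_\ell 2^{-\ell}/\|u(1/2,\cdot)\|\bigr|$ and then control it with the coefficient bounds \eqref{h-1-1}--\eqref{h-1-2} of Lemma \ref{h-1-lemma}. The only difference is bookkeeping: you keep the factor $a_\ell/\sqrt{a_\ell^2+\tau}\le 1$ instead of first bounding $|a_\ell|\le 1$ and then bounding $\|u(1/2,\cdot)\|$ from below, which yields the slightly sharper constant $3\eta$ in place of $8\eta$ — harmless, since any fixed multiple of $\eta$ suffices for the later applications.
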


\begin{proof}
Write
$$
u(\omega) =\sum_k  P_k ( u) =\sum_k a_k \psi_k (\omega),
$$
where $\| \psi_k \|=1$.
Then
$$
u ( \omega/2) =\sum_k a_k 2^{-k}  \psi_k (\omega).
$$
Without loss of generality we may assume 
$$
\| u \|^2 =\sum_k a_k^2 =1.
$$
Note that 
\begin{equation}\label{h-2-1}
\aligned
\Big \| \frac{P_\ell (u )}{\| u\|} -\frac{P_{\ell} (u(1/2, \cdot ))}{ \| u (1/2, \cdot )\| }\Big \|
& =\Big | a_\ell -\frac{a_\ell  2^{-\ell}}{\| u ( 1/2, \cdot )\|}\Big |\\
&  \le \Big |  1 -\frac{2^{-\ell}}{\| u ( 1/2, \cdot )\|}\Big |\\
&=\Big |\frac{ \| u ( 1/2, \cdot )\| -2^{-\ell}}{\| u ( 1/2, \cdot )\|} \Big|\\
&= \frac{ | \| u (1/2, \cdot )\|^2 - 2^{-2\ell}| }{  \| u ( 1/2, \cdot )\|
( \| u( 1/2, \cdot ) \| + 2^{-\ell})}.
\endaligned
\end{equation}
By Lemma \ref{h-1-lemma}, 
$$
a_\ell^2 \ge 1-3 \eta \ge 3/4,
$$
$$
\sum_{k=0}^{\ell-1} a_k^2  2^{-2k}  \le 2  \eta 2^{-2\ell} \quad \text{ and } \quad
\sum_{k=\ell +1}^\infty a_k^2 \le 2 \eta.
$$
It follows that
$$
\aligned
| \| u(1/2, \cdot )\|^2 - 2^{-2\ell} |
&= \sum_{k \le \ell -1} a_k^2 2^{-2k}
+ (1-a_\ell^2) 2^{-2\ell}
+ \sum_{k \ge \ell+1} a_k^2 2^{-2k}\\
&\le 6 \eta 2^{-2\ell},
\endaligned
$$
and that
$$
\| u(1/2, \cdot )\|\ge |a_\ell| 2^{-\ell}\ge 2^{-\ell-1}.
$$
This, together with \eqref{h-2-1}, gives \eqref{h-2-0}.
\end{proof}

The next lemma is elementary.

\begin{lemma}\label{n-lemma}
Let $f, g \in L^2(\Sd)$and $\|f\|, \|g \|\neq 0$.
Suppose
$$
\| f-g \| \le \delta \| f\|
$$
for some $\delta \in (0, 1]$.
Then
$$
\Big\| \frac{f}{\| f\|} -  \frac{g} {\| g \| }\Big \| \le \sqrt{2} \delta.
$$
\end{lemma}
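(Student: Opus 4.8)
The plan is to reduce to the normalized case and then work entirely with squared $L^2(\Sd)$-norms, so as to capture the sharp constant $\sqrt2$ rather than the $2$ that a crude triangle inequality would give. First I would observe that the asserted inequality is invariant under replacing $(f,g)$ by $(f/\|f\|,\, g/\|f\|)$, since the hypothesis $\|f-g\|\le\delta\|f\|$ scales the same way; hence we may assume $\|f\|=1$ and write $t=\|g\|>0$, so that $\|f-g\|\le\delta$.

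Next, expanding in the Hilbert space $L^2(\Sd)$,
\[
\Big\| f-\frac{g}{t}\Big\|^2 = \|f\|^2 - \frac{2}{t}\langle f,g\rangle + 1 = 2 - \frac{2\langle f,g\rangle}{t}.
\]
By the polarization identity $2\langle f,g\rangle = \|f\|^2+\|g\|^2-\|f-g\|^2 \ge 1+t^2-\delta^2$, this gives
\[
\Big\| f-\frac{g}{t}\Big\|^2 \le 2 - \frac{1+t^2-\delta^2}{t} = 2 - t - \frac{1-\delta^2}{t}.
\]

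Finally, since $\delta\le 1$ we have $1-\delta^2\ge 0$, so AM--GM yields $t+\frac{1-\delta^2}{t}\ge 2\sqrt{1-\delta^2}$, whence $\big\| f-\frac{g}{t}\big\|^2 \le 2-2\sqrt{1-\delta^2}$. Using $\sqrt{1-\delta^2}\ge 1-\delta^2$ (because $\sqrt{x}\ge x$ for $x\in[0,1]$) we conclude $\big\| f/\|f\| - g/\|g\| \big\|^2 \le 2\delta^2$, i.e. the desired bound. There is no genuine obstacle; the only point requiring attention is that one should not estimate $\big|\,\|f\|^{-1}-\|g\|^{-1}\big|$ directly (which loses a factor of $\sqrt2$) but instead keep the exact parallelogram identity and optimize in $t=\|g\|$.
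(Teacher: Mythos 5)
Your proof is correct and follows essentially the same route as the paper's: expand the squared $L^2(\Sd)$ distance of the normalized functions, use the hypothesis to bound $2\langle f,g\rangle$ from below, and apply AM--GM to arrive at $2-2\sqrt{1-\delta^2}\le 2\delta^2$. The normalization $\|f\|=1$ and the final elementary bound $\sqrt{1-\delta^2}\ge 1-\delta^2$ (where the paper instead rationalizes to $\tfrac{2\delta^2}{1+\sqrt{1-\delta^2}}$) are only cosmetic differences.
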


\begin{proof}
Note that 
$$
\| f-g \|^2= \| f\|^2 -2\langle f, g\rangle + \|g\|^2 \le \delta^2 \| f\|^2.
$$
It follows that 
$$
2\langle f, g\rangle  \ge  (1-\delta^2) \|f \|^2 +  \|g\|^2\ge 2 \sqrt{(1-\delta^2} \| f\| \| g\|.
$$
Hence.
$$
\aligned
\Big \| \frac{f}{\| f\|} -  \frac{g} {\| g \| }\Big\|^2 & = 2-\frac{2\langle  f, g\rangle }{\| f\| \|g \|}\\
&\le 2- 2 \sqrt{1-\delta^2}\\
&= \frac{2\delta^2}{1+\sqrt{1-\delta^2}}\\
&\le 2\delta^2.
\endaligned
$$
\end{proof}

We now prove the estimate \eqref{h-2-0} for solutions $u_\e$ by harmonic approximation.

\begin{lemma}\label{lemma-p1}
Fix  $L\ge 2$.
Let $u_\e\in H^1(B(0, 4))$ be a non-constant  solution of $\mathcal{L}_\e (u_\e)=0$ in $B(0, 4)$
such that $u_\e (0)=0$.
Suppose that $\wN(u_\e, 0, 1), \wN(u_\e, 0, 4)\le L+1 $, and
\begin{equation}\label{p1-0}
 \wN(u_\e, 0, 1/2),  \wN (u_\e, 0, 2)   \in [ \ell -(1/64), \ell+ (1/64)],
\end{equation}
 where $\ell \in \mathbb{N}$ and $\ell \le L$.
Then, if $0< \e< \e_2(L)$,
\begin{equation}\label{p1-1}
\aligned
&\Big \| \frac{P_\ell (u_\e )}{\| u_\e \|}
- \frac{P_\ell (u_\e (1/2, \cdot ))}{\| u_\e (1/2, \cdot )\|}\Big \|\\
 & \le 8 \left\{ \wN (u_\e, 0, 2) - \wN (u_\e, 0, 1/2)   \right\} + C\sqrt{\e},
\endaligned
\end{equation}
where $C$ depends on $L$.
\end{lemma}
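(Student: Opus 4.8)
The strategy is to replace $u_\e$ by a nearby harmonic function $u_0$, apply Lemma~\ref{h-2-lemma} to $u_0$, and carry the resulting estimate back to $u_\e$, arranging that every error produced by these two approximation steps has size $C(L)\sqrt{\e}$. Since the left-hand side of \eqref{p1-1} is unchanged under $u_\e\mapsto c\,u_\e$, I would first normalize so that $\|u_\e\|=\|u_\e(1,\cdot)\|=1$. Then \eqref{p1-0}, the bound $\wN(u_\e,0,4)\le L+1$, and \eqref{a4-1} with $r=4$ give $\fint_{\partial B(0,r)}u_\e^2\le C(L)$ for $0<r\le 7/2$. Applying Theorem~\ref{d1-thm} with $r=4$ and $\delta_0=1/64$ — legitimate since $\wN(u_\e,0,4)\le L+1$ and $\wN(u_\e,0,2)\le\ell+1/64$ — gives $\wN(u_\e,0,1)\le\ell+1/64$, hence $\|u_\e(1/2,\cdot)\|^2=4^{-\wN(u_\e,0,1)}\ge 4^{-L-1}$. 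Combining these with \eqref{p1-0}, the norms $\|u_\e(s,\cdot)\|$ for $s\in\{1/2,1,2\}$ are bounded above and below by positive constants depending only on $L$. Every smallness requirement imposed on $\e$ below will be collected into the final $\e_2(L)$.

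Next I would apply Theorem~\ref{a3-thm} with $r=4$ to obtain a harmonic function $u_0$ in $B(0,7/2)$ with $u_0(0)=0$, non-constant by Remark~\ref{h-app}, and satisfying
\[
\|u_\e-u_0\|_{L^\infty(B(0,3))}\le C\sqrt{\e}\,\Big(\fint_{\partial B(0,4)}u_\e^2\Big)^{1/2}\le C(L)\sqrt{\e}.
\]
Using the lower bounds on $\fint_{\partial B(0,s)}u_\e^2$, this lets me invoke Lemma~\ref{lemma-d1} at the radii $s=1/2$ and $s=2$, giving $|\wN(u_\e,0,s)-\wN(u_0,0,s)|\le C(L)\sqrt{\e}$ for $s\in\{1/2,2\}$. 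Since $u_0$ is harmonic, \eqref{d-02} yields $N(u_0,0,1)\le\wN(u_0,0,2)$ and $N(u_0,0,1/2)\ge\wN(u_0,0,1/2)$; together with \eqref{p1-0} and the monotonicity of $N(u_0,0,\cdot)$ this forces, once $C(L)\sqrt{\e}<1/64$,
\[
\ell-\frac{1}{32}\le N(u_0,0,1/2)\le N(u_0,0,1)\le\ell+\frac{1}{32},
\]
so $u_0$ meets the hypotheses of Lemmas~\ref{h-1-lemma} and \ref{h-2-lemma}, and moreover
\[
\eta_0:=N(u_0,0,1)-N(u_0,0,1/2)\le\wN(u_0,0,2)-\wN(u_0,0,1/2)\le\wN(u_\e,0,2)-\wN(u_\e,0,1/2)+C(L)\sqrt{\e}.
\]
Lemma~\ref{h-2-lemma} applied to $u_0$ then gives $\big\|P_\ell(u_0)/\|u_0\|-P_\ell(u_0(1/2,\cdot))/\|u_0(1/2,\cdot)\|\big\|\le 8\eta_0$.

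Finally I would transfer this to $u_\e$. The $L^\infty$ bound above yields $\|u_\e(s,\cdot)-u_0(s,\cdot)\|\le C(L)\sqrt{\e}$ for $s\in\{1/2,1\}$, so for $\e$ small $\|u_0(s,\cdot)\|\ge\|u_\e(s,\cdot)\|-C(L)\sqrt{\e}$ is again bounded below by a positive constant depending only on $L$. Using that $P_\ell$ is a norm contraction on $L^2(\Sd)$ with $\|P_\ell(g)\|\le\|g\|$, and splitting, for $s\in\{1/2,1\}$, the difference $\frac{P_\ell(u_\e(s,\cdot))}{\|u_\e(s,\cdot)\|}-\frac{P_\ell(u_0(s,\cdot))}{\|u_0(s,\cdot)\|}$ into the term $\frac{1}{\|u_\e(s,\cdot)\|}P_\ell(u_\e(s,\cdot)-u_0(s,\cdot))$ and the scalar correction $P_\ell(u_0(s,\cdot))\big(\|u_\e(s,\cdot)\|^{-1}-\|u_0(s,\cdot)\|^{-1}\big)$, the lower bounds on the $L^2(\Sd)$ norms show each difference is $\le C(L)\sqrt{\e}$. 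By the triangle inequality the left-hand side of \eqref{p1-1} then differs from $\big\|P_\ell(u_0)/\|u_0\|-P_\ell(u_0(1/2,\cdot))/\|u_0(1/2,\cdot)\|\big\|$ by at most $C(L)\sqrt{\e}$; combining with the bound $8\eta_0$ and the estimate for $\eta_0$ above yields \eqref{p1-1}, with $\e_2(L)$ the minimum of the smallness thresholds used.

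The main obstacle is organizational rather than conceptual: the three tools (Theorems~\ref{a3-thm} and \ref{d1-thm} and Lemmas~\ref{lemma-d1} and \ref{h-2-lemma}) are all available, and the work lies in verifying that $\fint_{\partial B(0,s)}u_\e^2$ stays comparable, with constants depending only on $L$, for $s\in[1/2,7/2]$, so that neither the harmonic-approximation error nor the frequency-comparison error of Lemma~\ref{lemma-d1} gets amplified by an uncontrolled renormalization factor. The point needing a little care is the extraction of the upper bound $\wN(u_\e,0,1)\le\ell+1/64$ from Theorem~\ref{d1-thm}: without it one cannot bound $\|u_\e(1/2,\cdot)\|$ from below, which is precisely what legitimizes the applications at scale $s=1/2$ of Lemma~\ref{lemma-d1} and of the perturbation splitting above.
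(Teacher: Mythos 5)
Your proposal is correct and follows essentially the same route as the paper: approximate $u_\e$ by the harmonic function $u_0$ from Theorem \ref{a3-thm} at scale $r=4$, compare doubling indices via Lemma \ref{lemma-d1}, apply the Weiss-monotonicity-based Lemma \ref{h-2-lemma} to $u_0$ with $\eta_0\le \wN(u_\e,0,2)-\wN(u_\e,0,1/2)+C\sqrt{\e}$, and transfer back using that $P_\ell$ is an $L^2(\Sd)$ contraction (your explicit splitting of the normalized difference is just an unfolding of the paper's Lemma \ref{n-lemma}, and your appeal to Theorem \ref{d1-thm} for $\wN(u_\e,0,1)\le\ell+1/64$ is harmless but unnecessary, since the hypothesis $\wN(u_\e,0,1)\le L+1$ already gives the needed lower bound on $\|u_\e(1/2,\cdot)\|$). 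The only differences are organizational (normalizing at scale $1$ and verifying the frequency hypotheses of Lemma \ref{h-2-lemma} explicitly), so the two arguments coincide in substance.
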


\begin{proof}

Let $u_0$ be the harmonic function in $B(0, 7r/8)$, given by Theorem \ref{a3-thm} with $r=4$.
Then $u_0(0)=u_\e(0)=0$, and 
$$
\| u_\e  -u_0  \|_{L^\infty( B(0, 2) )} \le C\sqrt{ \e } \| u_\e \|_{L^2(\partial B(0, 4))}.
$$
Since $\wN(u_\e, 0, 1), \wN(u_\e, 0, 2) , \wN(u_\e, 0, 4)\le L+1$,
it follows that
$$
\aligned
\|u_\e - u_0  \|  & \le C\sqrt{\e }  \| u_\e \|,\\
\| u_\e (1/2, \cdot ) -u_0 (1/2, \cdot ) \|  & \le C\sqrt{ \e } \| u_\e (1/2, \cdot )\|,
\endaligned
$$
where $C$ depends on $L$.
By Lemma \ref{n-lemma} we obtain 
$$
\aligned
\Big\|  \frac{u_\e }{\| u_\e \|} - \frac{u_0 }{\| u_0 \|}\Big \|
& \le C\sqrt{ \e },\\
\Big\|  \frac{u_\e (1/2, \cdot )}{\| u_\e (1/2, \cdot )\|} - \frac{u_0 (1/2, \cdot )}{\| u_0 (1/2, \cdot )\|} \Big\|
 & \le C\sqrt{ \e }.
\endaligned
$$
Consequently,  since $P_\ell$ is a projection, 
\begin{equation}\label{p1-2}
\Big\|  \frac{P_\ell (u_\e ) }{\| u_\e  \|} - \frac{P_\ell (u_0) }{\| u_0 \|}\Big \|
\le C\sqrt{ \e },
\end{equation}
\begin{equation}\label{p1-3}
\Big\|  \frac{P_\ell ( u_\e (1/2, \cdot )) }{\| u_\e (1/2, \cdot )\|} - \frac{P_\ell ( u_0 (1/2, \cdot )) }{\| u_0 (1/2, \cdot)\|} \Big\|
\le C \sqrt{ \e } .
\end{equation}

Next, by Lemma \ref{h-2-lemma},
\begin{equation}\label{p1-4}
\Big\|  \frac{P_\ell ( u_0 ) }{\| u_0 \|} - \frac{P_\ell ( u_0 (1/2, \cdot )) }{\| u_0 (1/2, \cdot )\|}\Big \|
\le 8 \eta,
\end{equation}
where
\begin{equation}\label{p1-5}
\eta = N(u_0, 0, 1) -N(u_0, 0, 1/2)\le \wN(u_0, 0, 2) -\wN (u_0, 0, 1/2)
.\end{equation}
Finally, by Lemma \ref{lemma-d1}, if $0< \e< \e_0(L)$,
$$
\wN(u_0,0,  2) \le \wN(u_\e, 0, 2) + C \sqrt{\e } \quad \text{ and }\quad 
\wN (u_\e, 0, 1/2) \le \wN(u_0, 0, 1/2) +C \sqrt{\e}.
$$
It follows that
$$
\eta\le  \wN(u_\e, 0, 2) -\wN (u_\e, 0, 1/2) + C \sqrt{\e}.
$$
This, together with \eqref{p1-2}, \eqref{p1-3}, \eqref{p1-4} and \eqref{p1-5}, gives \eqref{p1-1}, provided that $C\e_2(L) < 1/64$ and
$\e_2(L) \le \e_0(L)$.
\end{proof}

The next theorem contains  one of the most important estimates in  this paper.
It allows us to reach down to a minimal scale $r$ as long as  $r\ge C_0\e$ and that  the drop of the doubling index from $1$ to $r$ is small.
An inspection of its proof shows that the factor  $\eta$ in the right-hand side of \eqref{h-2-0}  is crucial.
The argument fails if $\eta$ is replaced by $\eta^\alpha$ for some $\alpha<1$.

\begin{thm}\label{p-thm}
Fix $L \ge 2$ and
let $\e_2(L)>0$ be given by Lemma \ref{lemma-p1}.
Let $u_\e\in H^1(B(0, 1))$ be a non-constant  solution of $\mathcal{L}_\e (u_\e)=0$ in $B(0, 1)$
such that $u_\e (0)=0$.
Suppose that $\wN(u_\e, 0, 1) \le L+1$ and
\begin{equation}\label{p2-0}
\wN(u_\e, 0, 2^{-j}) \in [ \ell -(1/64), \ell + (1/64)]
\end{equation}
for $j=1, 2, \dots J+3$, where $J\ge 0, \ell \in \mathbb{N}$ and $\ell \le L$.
Then, if $0< \e< 2^{-J-2} \e_2(L)$, 
\begin{equation}\label{p2-1}
\aligned
&\Big \| \frac{P_\ell (u_\e(1/4, \cdot ))}{\| u_\e (1/4, \cdot )\|}
-\frac{P_\ell (u_\e (1/2^{J+3}, \cdot ))} { \| u_\e (1/2^{J+3}, \cdot ) \|}\Big \| \\
& \le 8 \Big\{ \wN(u_\e, 0, 1/2) + \wN (u_\e, 0, 1/4) -\wN (u_\e, 0, 2^{-J-2})
-\wN (u_\e, 0, 2^{-J-3}) \Big\}\\
&\qquad\qquad\qquad
+C(2^J \e)^{1/2},
\endaligned
\end{equation}
where $C$ depends on $L$.
\end{thm}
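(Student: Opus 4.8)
The plan is to obtain \eqref{p2-1} by applying Lemma \ref{lemma-p1} once at each dyadic scale between $1/4$ and $2^{-J-3}$ and then summing the resulting inequalities. Fix an integer $j$ with $2\le j\le J+2$ and set $v^{(j)}(x)=u_\e(2^{-j}x)$. Since $4\cdot 2^{-j}\le 1$, the function $u_\e$ solves $\mathcal{L}_\e(u_\e)=0$ in $B(0,4\cdot 2^{-j})$, so by the rescaling property recorded just after Lemma \ref{a2-thm}, $v^{(j)}\in H^1(B(0,4))$ is a solution of $\mathcal{L}_{2^j\e}(v^{(j)})=0$ in $B(0,4)$; it is non-constant by unique continuation, and $v^{(j)}(0)=u_\e(0)=0$. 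Moreover $\wN(v^{(j)},0,s)=\wN(u_\e,0,2^{-j}s)$ for every $s$, while $v^{(j)}(1,\cdot)=u_\e(2^{-j},\cdot)$ and $v^{(j)}(1/2,\cdot)=u_\e(2^{-j-1},\cdot)$.

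First I would check that $v^{(j)}$ meets the hypotheses of Lemma \ref{lemma-p1} with the same $L$. The bound $\wN(v^{(j)},0,1)=\wN(u_\e,0,2^{-j})\le L+1$ holds for every $j$ in our range by \eqref{p2-0} (note $\ell+1/64<L+1$ since $\ell\le L$). The bound $\wN(v^{(j)},0,4)=\wN(u_\e,0,2^{-j+2})\le L+1$ holds by \eqref{p2-0} when $j\ge 3$ (then $2^{-j+2}=2^{-(j-2)}$ with $1\le j-2\le J$) and by the hypothesis $\wN(u_\e,0,1)\le L+1$ when $j=2$. The trapping conditions $\wN(v^{(j)},0,1/2)=\wN(u_\e,0,2^{-j-1})\in[\ell-1/64,\ell+1/64]$ and $\wN(v^{(j)},0,2)=\wN(u_\e,0,2^{-j+1})\in[\ell-1/64,\ell+1/64]$ follow from \eqref{p2-0}, since $3\le j+1\le J+3$ and $1\le j-1\le J+1$. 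Finally, the smallness condition required by Lemma \ref{lemma-p1} for $v^{(j)}$ is $0<2^j\e<\e_2(L)$, which holds because $2^j\e\le 2^{J+2}\e<\e_2(L)$ by the hypothesis $0<\e<2^{-J-2}\e_2(L)$.

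Lemma \ref{lemma-p1} applied to $v^{(j)}$ then yields, for each $2\le j\le J+2$,
\[
\Big\|\frac{P_\ell(u_\e(2^{-j},\cdot))}{\|u_\e(2^{-j},\cdot)\|}-\frac{P_\ell(u_\e(2^{-j-1},\cdot))}{\|u_\e(2^{-j-1},\cdot)\|}\Big\|\le 8\big\{\wN(u_\e,0,2^{-j+1})-\wN(u_\e,0,2^{-j-1})\big\}+C(2^j\e)^{1/2},
\]
with $C$ depending on $L$. Summing over $j=2,\dots,J+2$ and using the triangle inequality bounds the left-hand side of \eqref{p2-1}, since the telescoping differences of the normalized projections at consecutive scales add up to the difference at the endpoint scales $1/4$ and $2^{-J-3}$. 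Writing $d_k=\wN(u_\e,0,2^{-k})$, the doubling-index sum telescopes with step $2$: $\sum_{j=2}^{J+2}(d_{j-1}-d_{j+1})=d_1+d_2-d_{J+2}-d_{J+3}$, which is exactly the bracketed expression in \eqref{p2-1}. For the error terms, $\sum_{j=2}^{J+2}C(2^j\e)^{1/2}=C\e^{1/2}\sum_{j=2}^{J+2}2^{j/2}\le C'(2^J\e)^{1/2}$, because a geometric series with ratio $\sqrt2$ is dominated by a constant multiple of its largest term. Combining these two bounds gives \eqref{p2-1}.

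I do not expect a serious obstacle: the proof is essentially a summation of Lemma \ref{lemma-p1} over scales. The only points needing care are (i) keeping the running index $j$ within $2\le j\le J+2$, which is precisely the range for which all the scales $2^{-j+2},2^{-j+1},2^{-j},2^{-j-1}$ that enter are controlled by \eqref{p2-0} (or by $\wN(u_\e,0,1)\le L+1$), and (ii) tracking that the effective period in Lemma \ref{lemma-p1} at scale $2^{-j}$ is $2^j\e$, which after summation produces the final error $C(2^J\e)^{1/2}$. The genuinely delicate feature — that each per-scale error in the doubling index enters \emph{linearly}, so that the errors telescope rather than accumulate geometrically — is already built into Lemma \ref{lemma-p1}, and ultimately into the linear dependence on $\eta$ in Lemma \ref{h-2-lemma}.
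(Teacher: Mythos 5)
Your proof is correct and follows essentially the same route as the paper: rescale to $v^{(j)}(x)=u_\e(2^{-j}x)$, apply Lemma \ref{lemma-p1} at each dyadic scale (the paper phrases this via its $J=0$ case, which is exactly Lemma \ref{lemma-p1} after rescaling), and sum, using the step-two telescoping of the doubling indices and the geometric summability of the errors $C(2^j\e)^{1/2}$. Your verification of the hypotheses of Lemma \ref{lemma-p1} at each scale, including the endpoint case $j=2$ handled by $\wN(u_\e,0,1)\le L+1$, matches what the paper's argument implicitly uses.
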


\begin{proof}
The case $J=0$ follows readily  from  Lemma \ref{lemma-p1} by a rescaling argument.
Indeed, let $v(x)=u_\e (x/4)$.
Then $\mathcal{L}_{4\e} (v)=0$ in $B(0, 4)$.
Note that $ 4\e < \e_2(L)$ and $\wN(v, 0, r) = \wN(u_\e, 0, r/4)$.
By a change of variables,  we obtain the  estimate \eqref{p2-1}  from \eqref{p1-1}.

The general case also uses  a rescaling argument.
Consider $\phi (x) =u_\e (2^{-j} x)$, where $0\le j \le J$.
Note that
$$
\mathcal{L}_{2^j \e} (\phi )=0 \quad \text{ in } B(0, 1) \quad \text{ and } \quad
\wN(\phi , 0, r)=\wN(u_\e, 0, 2^{-j} r).
$$
Since $2^j \e \le 2^J \e < 2^{-2} \e_2 (L)$, and 
$$
\wN (\phi, 0, 2^{-k}) =\wN(u_\e, 0, 2^{-k-j}) \in [\ell - (1/(64), \ell + (1/64)]
$$
for $k=1, 2, 3$,  by the estimate for the case $J=0$, we obtain 
\begin{equation}\label{p2-2}
\aligned
& \Big\| \frac{P_\ell (\phi(1/4, \cdot ))}{\| \phi (1/4, \cdot )\|}
-\frac{P_\ell ( \phi (1/2^{3}, \cdot ))} { \|  \phi(1/2^{3}, \cdot ) \|} \Big\| \\
& \le 8 \left\{ \wN(\phi, 0, 1/2) 
-\wN (\phi , 0, 2^{-3}) \right\}
+C(2^j \e)^{1/2}.
\endaligned
\end{equation}
By a change of variables this leads to 
\begin{equation}\label{p2-3}
\aligned
&\Big \| \frac{P_\ell (u_\e (1/2^{j+2},\cdot  )) }{\| u_\e (1/2^{j+2}, \cdot )\|}
-\frac{P_\ell (u_\e  (1/ 2^{j+3}, \cdot)) } { \|  u_\e (1/2^{j+3}, \cdot ) \|}\Big \| \\
& \le 8 \left\{ \wN(u_\e, 0, 2^{-j-1} ) 
-\wN (u_\e , 0, 2^{-j-3}) \right\}
+C(2^j \e)^{1/2}
\endaligned
\end{equation}
for any $0\le j \le J$.
By  summing \eqref{p2-3} from $0$ to $J$,
 we see that the right-hand side of \eqref{p2-1} is bounded by
$$
8 \sum_{j=0}^J  \left\{ \wN(u_\e, 0, 2^{-j-1} ) 
-\wN (u_\e , 0, 2^{-j-3}) \right\}
+C\sum_{j=0}^J (2^j \e)^{1/2}.
$$
Observe that the first sum above is a telescope sum, while the second is bounded by $C (2^J \e)^{1/2}$.
This completes the proof.
\end{proof}

\begin{lemma}\label{lemma-p4}
Fix $L\ge 2$.
Let $u_\e\in H^1(B(0, 1))$ be a non-constant  solution of $\mathcal{L}_\e (u_\e)=0$ in $B(0, 1)$
such that $u_\e (0)=0$.
Suppose that $\wN(u_\e, 0, 1)\le L+1$ and
\begin{equation}\label{p4-00}
\wN(u_\e, 0, 2^{-j}) \in [\ell - (1/64), \ell + (1/64)]
\end{equation}
for $j=1, 2, 3$, where $\ell \in \mathbb{N}$ and $\ell \le L$.
Then, if $0< \e< \e_3(L)$,
\begin{equation}\label{p4-0}
\aligned
& \frac{ 2^{k (\ell-1)} |\nabla u_\e(\omega/2^{k+3})|}{ \| u_\e (1/2^3, \cdot )\| }\\
&\quad  \ge 
\frac{c_0\{ |\nabla_\omega P_\ell (u_\e(1/8, \cdot )) (\omega) | + |P_\ell (u_\e (1/8, \cdot ))(\omega) |\} }
{\| u_\e (1/8, \cdot )\|}- C_k( \sqrt{\e}+  \sqrt{\delta}) 
\endaligned
\end{equation}
for any $\omega \in \Sd$ and $k\ge 0$,
where $c_0$ depends on $L$, $C_k$ depends on $k$ and $L$,  and
\begin{equation}
\delta=|\wN(u_\e, 0, 1/2) - \wN(u_\e, 0, 1/8)|.
\end{equation}
\end{lemma}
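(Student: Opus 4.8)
The plan is to reduce the estimate to its harmonic counterpart --- which is precisely Remark~\ref{remark-p10} --- by harmonic approximation, and then to transport the resulting lower bound back to $u_\e$ using the homogenization approximation $\nabla u_\e\approx(I+\nabla\chi(x/\e))\nabla u_0$ together with the invertibility hypothesis~\eqref{inv-0}. First I would apply Theorem~\ref{a3-thm} on $B(0,1)$ to produce a harmonic function $u_0$ in $B(0,7/8)$ with $u_0(0)=u_\e(0)=0$, non-constant by Remark~\ref{h-app}, satisfying
\[
\|u_\e-u_0\|_{L^\infty(B(0,3/4))}+\|\nabla u_\e-(I+\nabla\chi(x/\e))\nabla u_0\|_{L^\infty(B(0,3/4))}\le C\sqrt{\e}\,\Big(\fint_{\partial B(0,1)}u_\e^2\Big)^{1/2}.
\]
Since $\wN(u_\e,0,1)\le L+1$ and $\wN(u_\e,0,2^{-j})\in[\ell-1/64,\ell+1/64]$ for $j=1,2,3$, the doubling inequalities bound $(\fint_{\partial B(0,1)}u_\e^2)^{1/2}$ by $C(L)\|u_\e(1/8,\cdot)\|$, so the right-hand side above is at most $C(L)\sqrt{\e}\,\|u_\e(1/8,\cdot)\|$.

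Next I would transfer the frequency information to $u_0$. Applying Lemma~\ref{lemma-d1} on $\partial B(0,2^{-j})$ for $j=1,2,3$ --- which is legitimate because $\|u_\e-u_0\|_{L^\infty(B(0,2^{-j}))}\le C(L)\sqrt{\e}\,\|u_\e(2^{-j},\cdot)\|$ and $\wN(u_\e,0,2^{-j})\le L+1$ --- gives $|\wN(u_\e,0,2^{-j})-\wN(u_0,0,2^{-j})|\le C(L)\sqrt{\e}$. Hence, once $0<\e<\e_3(L)$ is small enough that $C(L)\sqrt{\e}<1/64$, we get $\wN(u_0,0,2^{-j})\in[\ell-1/32,\ell+1/32]$ for $j=1,2,3$; combining this with \eqref{d-02} and the monotonicity of Almgren's frequency $N(u_0,0,\cdot)$ yields $\ell-1/32\le N(u_0,0,1/8)\le N(u_0,0,1/4)\le\ell+1/32$. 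Therefore $\tilde u(x):=u_0(x/4)$, which is harmonic in $B(0,7/2)$, satisfies the hypotheses of Lemma~\ref{h-1-lemma}, and applying \eqref{r-p-20} to $\tilde u$ with $j=k$ and undoing the rescaling (noting $\tilde u(1/2^{k+1},\omega)=u_0(\omega/2^{k+3})$ and $\tilde u(1/2,\cdot)=u_0(1/8,\cdot)$) gives, up to a constant depending on $\ell$,
\[
\frac{2^{k(\ell-1)}|\nabla u_0(\omega/2^{k+3})|}{\|u_0(1/8,\cdot)\|}\ge\frac{c_0\{|\nabla_\omega P_\ell(u_0(1/8,\cdot))(\omega)|+|P_\ell(u_0(1/8,\cdot))(\omega)|\}}{\|u_0(1/8,\cdot)\|}-C(k,\ell)\sqrt{\tilde\eta},
\]
where $\tilde\eta=N(u_0,0,1/4)-N(u_0,0,1/8)$. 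By \eqref{d-02} once more, $\tilde\eta\le\wN(u_0,0,1/2)-\wN(u_0,0,1/8)\le\delta+C(L)\sqrt{\e}$, so $\sqrt{\tilde\eta}\le C(\sqrt{\delta}+\sqrt{\e})$ (the precise positive power of $\e$ produced by the approximation being immaterial, cf. Remark~\ref{h-app}).

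Finally I would return to $u_\e$. Each point $\omega/2^{k+3}$ lies in $B(0,1/8)\subset B(0,3/4)$, so the gradient estimate from the first step applies there, and \eqref{inv-0} --- which bounds $|(I+\nabla\chi)^{-1}|$ and hence makes $I+\nabla\chi$ uniformly invertible --- gives $|\nabla u_0(\omega/2^{k+3})|\le C\big(|\nabla u_\e(\omega/2^{k+3})|+\sqrt{\e}\,\|u_\e(1/8,\cdot)\|\big)$. Moreover $\|u_\e(1/8,\cdot)-u_0(1/8,\cdot)\|\le C(L)\sqrt{\e}\,\|u_\e(1/8,\cdot)\|$, so for $\e<\e_3(L)$ the norms $\|u_0(1/8,\cdot)\|$ and $\|u_\e(1/8,\cdot)\|$ are comparable; and since $P_\ell$ is the orthogonal projection onto the finite-dimensional space of order-$\ell$ spherical harmonics, on which the $C^1(\Sd)$- and $L^2(\Sd)$-norms are equivalent, one has $\|P_\ell(u_\e(1/8,\cdot))-P_\ell(u_0(1/8,\cdot))\|_{C^1(\Sd)}\le C(\ell)\sqrt{\e}\,\|u_\e(1/8,\cdot)\|$. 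Substituting these three comparisons into the displayed inequality, absorbing the factor $2^{k(\ell-1)}$ into a constant $C_k$ depending on $k$ and $L$, and using $\sqrt{\tilde\eta}\le C(\sqrt{\delta}+\sqrt{\e})$, produces \eqref{p4-0} with $c_0$ depending on $L$ and $C_k$ on $k$ and $L$.

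The analytic content is already carried by Lemma~\ref{h-1-lemma} and Remark~\ref{remark-p10}; the only genuine difficulty here is the two-sided bookkeeping. The step I expect to be most delicate is securing $N(u_0,0,\cdot)\in[\ell-1/32,\ell+1/32]$ throughout $[1/8,1/4]$ from the three \emph{discrete} doubling conditions \eqref{p4-00} --- this is exactly why the window in \eqref{p4-00} is $1/64$ rather than $1/32$, leaving room for the $O(\sqrt{\e})$ loss from Lemma~\ref{lemma-d1} --- together with verifying that, after iterating the gradient comparison down through $k$ dyadic scales, both the homogenization error ($\sim 2^{k(\ell-1)}\sqrt{\e}$) and the frequency-drop error ($\sim\sqrt{\tilde\eta}$) fold into the announced form $C_k(\sqrt{\e}+\sqrt{\delta})$.
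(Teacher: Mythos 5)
Your proposal is correct and follows essentially the same route as the paper's proof: harmonic approximation via Theorem \ref{a3-thm}, transfer of the doubling information to $u_0$ via Lemma \ref{lemma-d1} and \eqref{d-02} so that Remark \ref{remark-p10} (estimate \eqref{r-p-20}) applies to the rescaled $u_0$, the invertibility of $I+\nabla\chi$ to pass from $\nabla u_0$ back to $\nabla u_\e$, and the finite-dimensionality of the space of order-$\ell$ spherical harmonics to swap $P_\ell(u_0(1/8,\cdot))$ for $P_\ell(u_\e(1/8,\cdot))$. The only slight inaccuracy is in your closing remark: the homogenization gradient comparison is applied once, pointwise at $\omega/2^{k+3}\in B(0,3/4)$, rather than iterated through dyadic scales (the iteration lives entirely inside Remark \ref{remark-p10} for the harmonic $u_0$), but your bookkeeping of the resulting $2^{k(\ell-1)}\sqrt{\e}$ error into $C_k(\sqrt{\e}+\sqrt{\delta})$ is exactly what the paper does.
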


\begin{proof}
Let $u_0$ be a harmonic function in $B(0,7/8)$, given by Theorem \ref{a3-thm} with $r=1$. 
Then
$$
\aligned
|u_\e (x) -u_0(x)|  & \le C \sqrt{\e}  \| u_\e\|,\\
|\nabla u_\e(x) - (I +\nabla \chi (x/\e) )\nabla u_0 (x)|
& \le C\sqrt{ \e}  \| u_\e\|,
\endaligned
$$
for any $x\in B(0, 3/4)$.
It follows that for any $x\in B(0, 3/4)$,
$$
|\nabla u_\e (x)| \ge c |\nabla u_0 (x)| - C\sqrt{ \e } \| u_\e\|,
$$
where we have used the assumption that the matrix $I + \nabla \chi$ is invertible and $| (I +\nabla \chi)^{-1} | \le C$.
By \eqref{r-p-20} and a simple rescaling, we have
$$
2^{k (\ell-1)} |\nabla u_0 (\omega/2^{k+3} )|
\ge c \{ |\nabla_\omega P_\ell (u_0 (1/8, \cdot ))(\omega) | + |P_\ell (u_0 (1/8, \cdot ))(\omega) |\}- C_k \sqrt{\eta} \| u_0 (1/8, \cdot )\|
$$
for any $\omega\in \Sd$ and $k \ge 0$, where $c$ depends o $\ell$, $C_k$ depends on $k$ and $L$, and 
\begin{equation}\label{p4-1}
\aligned
\eta & = N(u_0, 0, 1/4)-N(u_0, 0, 1/8)\\
& \le \wN (u_0, 0, 1/2)-\wN(u_0, 0, 1/8)\\
& \le \wN(u_\e, 0, 1/2) -\wN (u_\e, 0, 1/8) + C\sqrt{ \e} ,
\endaligned
\end{equation}
and we have used Lemma \ref{lemma-d1} for the last inequality.
As a result, we see that
\begin{equation}\label{p4-2}
\aligned
2^{k(\ell-1)} |\nabla u_\e (\omega/2^{k+3} )|
 & \ge c \{ |\nabla_\omega P_\ell (u_0 (1/8, \cdot ))(\omega)| 
+ |P_\ell (u_0 (1/8, \cdot ))(\omega) |\} \\
&\qquad\qquad
- C_k \sqrt{\eta} \| u_\e (1/8, \cdot )\| - C_k \sqrt{\e}  \| u_\e\|
\endaligned
\end{equation}
for any $\omega\in \Sd$ and $k \ge 0$.

Finally,  observe that $P_\ell (u_\e(1/8, \cdot ) -u_0 (1/8, \cdot ))$ is a spherical harmonic on $\Sd$. Thus, 
$$
\aligned
& \| \nabla_\omega P_\ell \big (u_\e(1/8, \cdot ) -u_0 (1/8, \cdot )\big)\|_{L^\infty(\partial B(0, 1))}
+ \| P_\ell (u_\e(1/8, \cdot ) -u_0 (1/8, \cdot ))\|\\
& \le C \| P_\ell \big(u_\e (1/8, \cdot ) -u_0 (1/8, \cdot )\big)\|\\
& \le C \| u_\e(1/8, \cdot )- u_0(1/8, \cdot )\| \\
& \le C\sqrt{ \e } \| u_\e  \|\\
&\le C \sqrt{\e} \| u_\e (1/8, \cdot)\|.
\endaligned
$$
This, together with \eqref{p4-2} and \eqref{p4-1}, yields \eqref{p4-0}.
\end{proof}

\begin{thm}\label{p5-thm}
Fix $L\ge 2$.
Under the same assumptions on $u_\e$ as in Theorem \ref{p-thm}, 
we have
\begin{equation}\label{p5-0}
\aligned
& \frac{ 2^{-J+ k(\ell-1)} |\nabla u_\e (\omega/2^{k+J+3}) |}{\| u_\e (1/2^{J+3}, \cdot )\|}
\ge \frac{ c\{ |\nabla_\omega P_\ell (u_\e(1/4, \cdot )) (\omega) | + |P_\ell (u_\e (1/4, \cdot ))(\omega) |\} }
{\| u_\e (1/4, \cdot )\|}\\
& \qquad-  C \Big\{ \wN(u_\e, 0, 1/2) + \wN (u_\e, 0, 1/4) -\wN (u_\e, 0, 2^{-J-2})
-\wN (u_\e, 0, 2^{-J-3}) \Big\}\\
&\qquad- C_k (\sqrt{2^J \e} +\sqrt{\delta})
\endaligned
\end{equation}
for any $\omega\in  \Sd$ and $k \ge 0$, provided $0< \e< 2^{-J} \e_4(L)$, where $C, c>0$ depend on $L$, 
$C_k$  depends on $k$ and $L$, and $\delta$ is given by \eqref{delta}
\end{thm}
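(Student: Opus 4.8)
The plan is to assemble two estimates that are already at hand: Lemma~\ref{lemma-p4}, which converts control of the degree-$\ell$ projection into a pointwise lower bound on $|\nabla u_\e|$ at the innermost scales, and Theorem~\ref{p-thm}, which lets one replace the projection at the innermost scale $2^{-J-3}$ by the projection at scale $1/4$ at the cost of the telescoping drop of the doubling index. The only extra ingredient is the elementary fact that on the finite-dimensional space of spherical harmonics of degree $\ell$ both $\|\cdot\|_{L^\infty(\Sd)}$ and $\|\nabla_\omega\cdot\|_{L^\infty(\Sd)}$ are dominated by $\|\cdot\|_{L^2(\Sd)}$, with a constant depending only on $\ell$ (hence on $L$).

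First I would reduce to Lemma~\ref{lemma-p4} by the rescaling already used in the proof of Theorem~\ref{p-thm}. Set $\phi(x)=u_\e(2^{-J}x)$, so that $\mathcal{L}_{2^J\e}(\phi)=0$ in $B(0,1)$, $\phi(0)=0$, and $\wN(\phi,0,r)=\wN(u_\e,0,2^{-J}r)$; hypothesis \eqref{p2-0} then guarantees $\wN(\phi,0,1)\le L+1$ and $\wN(\phi,0,2^{-j})\in[\ell-(1/64),\ell+(1/64)]$ for $j=1,2,3$, so Lemma~\ref{lemma-p4} applies to $\phi$ with parameter $2^J\e$ provided $2^J\e<\e_3(L)$. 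Since $\phi(1/8,\omega)=u_\e(\omega/2^{J+3})$, $\nabla\phi(\omega/2^{k+3})=2^{-J}\nabla u_\e(\omega/2^{k+J+3})$, and $\|\phi(1/8,\cdot)\|=\|u_\e(1/2^{J+3},\cdot)\|$, undoing the change of variables in the conclusion of Lemma~\ref{lemma-p4} gives, for all $\omega\in\Sd$ and $k\ge0$,
\begin{equation*}
\frac{2^{-J+k(\ell-1)}|\nabla u_\e(\omega/2^{k+J+3})|}{\|u_\e(1/2^{J+3},\cdot)\|}
\ge\frac{c_0\{|\nabla_\omega P_\ell(u_\e(1/2^{J+3},\cdot))(\omega)|+|P_\ell(u_\e(1/2^{J+3},\cdot))(\omega)|\}}{\|u_\e(1/2^{J+3},\cdot)\|}-C_k\big(\sqrt{2^J\e}+\sqrt{\delta}\big),
\end{equation*}
where the homogenization error $\sqrt{\e}$ of Lemma~\ref{lemma-p4} has become $\sqrt{2^J\e}$ after rescaling and the resulting doubling-index drop $|\wN(u_\e,0,2^{-J-1})-\wN(u_\e,0,2^{-J-3})|$ is the quantity $\delta$ of \eqref{delta}.

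It then remains to move the projection from scale $2^{-J-3}$ to scale $1/4$. By Theorem~\ref{p-thm}, the $L^2(\Sd)$-distance between $P_\ell(u_\e(1/4,\cdot))/\|u_\e(1/4,\cdot)\|$ and $P_\ell(u_\e(1/2^{J+3},\cdot))/\|u_\e(1/2^{J+3},\cdot)\|$ is at most
\[
E:=8\big\{\wN(u_\e,0,1/2)+\wN(u_\e,0,1/4)-\wN(u_\e,0,2^{-J-2})-\wN(u_\e,0,2^{-J-3})\big\}+C(2^J\e)^{1/2};
\]
their difference is a spherical harmonic of degree $\ell$, so by the norm equivalence noted above its value and tangential gradient at any $\omega$ are bounded by $C_\ell E$, and a triangle inequality lower-bounds the projection term at scale $2^{-J-3}$ in the display above by the corresponding term at scale $1/4$ minus $C_\ell E$. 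Substituting this, renaming $c:=c_0$, absorbing the $(2^J\e)^{1/2}$ contributions into $C_k\sqrt{2^J\e}$, and taking $\e_4(L)=\min\{\e_2(L)/4,\,\e_3(L)\}$ — so that $\e<2^{-J}\e_4(L)$ forces both $\e<2^{-J-2}\e_2(L)$ (needed to invoke Theorem~\ref{p-thm}) and $2^J\e<\e_3(L)$ — yields \eqref{p5-0}. I expect the only delicate point to be the bookkeeping of the rescaling: tracking that the homogenization error gains the factor $2^{J/2}$, checking that every doubling-index hypothesis of Lemma~\ref{lemma-p4} and Theorem~\ref{p-thm} survives under $x\mapsto2^{-J}x$, and verifying that the telescoping combination produced by Theorem~\ref{p-thm} is exactly the one appearing in \eqref{p5-0}; there is no new analytic difficulty beyond these assemblies.
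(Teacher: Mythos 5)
Your proposal is correct and follows essentially the same route as the paper: rescale by $x\mapsto 2^{-J}x$ to invoke Lemma \ref{lemma-p4} (picking up the $\sqrt{2^J\e}$ error and the $\delta$ of \eqref{delta}), then use Theorem \ref{p-thm} together with the equivalence of $L^2$ and $C^1$ norms on degree-$\ell$ spherical harmonics to transfer the projection term from scale $2^{-J-3}$ to scale $1/4$. The bookkeeping of the doubling-index hypotheses and the choice $\e_4(L)=\min\{\e_2(L)/4,\e_3(L)\}$ match the paper's argument.
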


\begin{proof}
Note that the function in the left-hand side of \eqref{p2-1} is a spherical harmonics.
It follows that
$$
\aligned
 & \Big \| \frac{P_\ell (u_\e(1/4, \cdot ))}{\| u_\e (1/4, \cdot )\|}
-\frac{P_\ell (u_\e (1/2^{J+3}, \cdot ) )} { \| u_\e (1/2^{J+3}, \cdot ) \|}\Big \|_{L^\infty(\Sd)}\\
& \qquad
+ \Big \| \frac{\nabla_\omega P_\ell (u_\e(1/4, \cdot ))}{\| u_\e (\omega/4)\|}
-\frac{\nabla_\omega P_\ell (u_\e (1/2^{J+3}, \cdot ))} { \| u_\e (1/2^{J+3}, \dot ) \|}\Big \|_{L^\infty(\Sd)} \\
& \le C \Big \| \frac{P_\ell (u_\e(1/4, \cdot ))}{\| u_\e (1/4, \cdot )\|}
-\frac{P_\ell (u_\e (1/2^{J+3}, \cdot ))} { \| u_\e (1/2^{J+3}, \cdot ) \|}\Big \|. \\
\endaligned
$$
This leads to
\begin{equation}\label{p5-1}
\aligned
 & \frac{|\nabla_\omega P_\ell (u_\e(1/2^{J+3}, \cdot  ))| + |P_\ell (u_\e (1/2^{J+3}, \cdot ))|}
{\| u_\e (1/2^{J+3}, \cdot )\|}\\
&\ge  \frac{c \{ |\nabla_\omega P_\ell (u_\e(1/4, \cdot  ))| + |P_\ell (u_\e (1/4, \cdot ))|\}}
{\| u_\e (1/4, \cdot )\|}
-C \Big \| \frac{P_\ell (u_\e(1/4, \cdot ))}{\| u_\e (1/4, \cdot )\|}
-\frac{P_\ell (u_\e( 1/2^{J+3}, \cdot ))} { \| u_\e (1/2^{J+3}, \cdot ) \|}\Big \|. \\
\endaligned
\end{equation}

Next, let $\phi (x)=u_\e (2^{-J} x)$.
Then 
$$
\mathcal{L}_{2^J \e} (\phi)=0 \quad \text{ in } B(0, 1) \quad \text{ and } \quad
\wN (\phi, 0, r)= \wN (u_\e, 0, 2^{-J} r).
$$
By applying  Lemma \ref{lemma-p4}  to $\phi$, we see that  if $0< \e< 2^{-J} \e_3 (L)$, 
\begin{equation}\label{p5-2}
\aligned
 & \frac{2^{-J+k (\ell-1) }|  \nabla u_\e (\omega/2^{k+ J+3})|}{\| u_\e (1/2^{J+3}, \cdot )\|}\\
& \ge \frac{ c \{ |\nabla_\omega P_\ell (u_\e (1/2^{J+3}, \cdot )) |
+|P_\ell (u_\e (1/2^{J+3},  \cdot )) | \} }
{\| u_\e (1/2^{J+3}, \cdot ) \|}
-C_k (\sqrt{2^J\e} +\sqrt{\delta} ),
\endaligned
\end{equation}
where
\begin{equation}\label{delta}
\delta =| \wN (u_\e, 0, 1/2^{J+1}) -\wN (u_\e, 0, 1/2^{J+3})|.
\end{equation}
The estimate \eqref{p5-0} now follows readily from \eqref{p5-2}, \eqref{p5-1}, and \eqref{p2-1}.
\end{proof}

\begin{remark}
By introducing a factor $2^{-k}$ in the left-hand side of \eqref{p5-0} we will be 
 able to reach down to the scale $2^{-k}  r^*$ for any fixed $k$, where $r^*$ is the minimal scale defined 
by \eqref{rad-1}.
\end{remark}


\section{Estimates of turning, part II}\label{section-5}

Throughout this section we assume $A\in \mathcal{A}(\lambda, \Gamma, M, \mu)$.
We consider the case where the solution $u_\e$ has two critical points whose doubling indices 
are trapped between $\ell-\delta$ and $\ell +\delta$ for some $\ell\ge 2$.

\begin{lemma}\label{lemma-sh}
Let $u$ be a non-constant harmonic function in $B(0, 2)$.
Let $x_1 \in B(0, 1/4)$.
Suppose that 
\begin{equation}\label{sh-0}
\aligned
\ell -(1/32)  \le  &N(u, 0, 1/2) \le N(u, 0, 1) \le \ell + (1/32),\\
\ell - (1/32)\le & N(u, x_1, 1/2)  \le N(u, x_1, 1) \le \ell +(1/32),
\endaligned
\end{equation}
for some  $\ell \in \mathbb{N}$ and $\ell \ge 2$.
Then
\begin{equation}\label{sh-1}
\| P_\ell (u (x_1+\cdot ))
-P_\ell (u)\|
\le C \sqrt{\eta} |x_1| \| P_\ell (u) \|,
\end{equation}
\begin{equation}\label{sh-2}
\| x_1 \cdot \nabla P_\ell (u)\|
\le C\sqrt{\eta} \| P_\ell (u)\|,
\end{equation}
where 
$$
\eta
= N(u, 0, 1)+ N(u, x_1, 1) - N(u, 0, 1/2) -N(u, x_1, 1/2)
$$
and $C$ depends on $\ell$.
\end{lemma}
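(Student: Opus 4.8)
The plan is to reduce \eqref{sh-1} and \eqref{sh-2} to statements about the spherical-harmonic expansions of $u$ centered at $0$ and at $x_1$, using the same Weiss-monotonicity machinery employed in Lemma \ref{h-1-lemma}. Normalize $\|P_\ell(u)\|=1$; by translation invariance of Almgren frequency and the hypotheses \eqref{sh-0}, Lemma \ref{h-1-lemma} applies simultaneously at the two centers, giving that $u$ is, up to an error of size $\sqrt{\eta}$ in $L^2$, a homogeneous harmonic polynomial of degree $\ell$ about $0$ \emph{and} about $x_1$. Write $P=P_\ell(u)$ for the degree-$\ell$ part about $0$; the key point is that $u-P$ has all its frequency content away from $\ell$ (both below and above), with $\|u-P\|\le C\sqrt{\eta}$ on the relevant ball after the normalization, and similarly $u-P_\ell(u(x_1+\cdot))$ has small $L^2$ norm.

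The heart of the matter is then the purely polynomial fact that a homogeneous harmonic polynomial of degree $\ell$ cannot be well-approximated, near a \emph{different} center $x_1$, by a homogeneous harmonic polynomial of the same degree unless $x_1\cdot\nabla P$ is correspondingly small. Concretely, Taylor-expand $P$ about $x_1$: $P(x_1+y)=P(y)+x_1\cdot\nabla P(y)+Q(y)$, where $Q$ collects the terms of degree $\le \ell-2$ in $y$ (with coefficients polynomial in $x_1$), and $x_1\cdot\nabla P(\cdot)$ is homogeneous of degree $\ell-1$. Since $P_\ell(u(x_1+\cdot))$ is the degree-$\ell$, harmonic piece, and $P(\cdot)$ is already the degree-$\ell$ harmonic piece of $P(x_1+\cdot)$, the degree-$(\ell-1)$ component $x_1\cdot\nabla P$ of $P(x_1+\cdot)$ must be matched by the degree-$(\ell-1)$ component of $u(x_1+\cdot)-P_\ell(u(x_1+\cdot))$ up to the approximation errors — but that component has size $O(\sqrt{\eta}\,|x_1|)$ by Lemma \ref{h-1-lemma} applied at $x_1$ (the $j=\ell-1$ term in \eqref{h-1-2}, suitably rescaled). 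Combining, one gets $\|x_1\cdot\nabla P\|\lesssim \sqrt{\eta}$ on the unit sphere, which is \eqref{sh-2}; and then \eqref{sh-1} follows because $P(x_1+\cdot)-P(\cdot)=x_1\cdot\nabla P+Q$ with $\|Q\|\lesssim |x_1|^2\,(\text{lower-degree norms of }P)$, and $P_\ell$ projects away $Q$ and the higher error terms, leaving $\|P_\ell(u(x_1+\cdot))-P\|\lesssim \sqrt{\eta}\,|x_1|$.

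In executing this I would (i) fix the normalization and record, from Lemma \ref{h-1-lemma} at both centers, the two-sided frequency-decay estimates for $u-P$ about $0$ and $u-P_\ell(u(x_1+\cdot))$ about $x_1$; (ii) set up the Taylor expansion of the degree-$\ell$ harmonic polynomial $P$ about $x_1$ and identify $x_1\cdot\nabla P$ as its unique degree-$(\ell-1)$ part; (iii) use orthogonality of spherical harmonics of different degrees on each sphere, together with the fact that $u(x_1+\cdot)$ agrees with $u$ translated, to equate the degree-$(\ell-1)$ Fourier component about $x_1$ with a controlled error, yielding \eqref{sh-2}; (iv) feed \eqref{sh-2} back into the Taylor expansion to control the difference of the degree-$\ell$ projections and obtain \eqref{sh-1}. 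The main obstacle I anticipate is bookkeeping the change of expansion center cleanly: the quantities $P_\ell(u(x_1+\cdot))$ and $P(x_1+\cdot)$ live in spherical-harmonic decompositions attached to different spheres, so one must be careful that the $L^2(\Sd)$ norms, the homogeneity degrees, and the factors of $|x_1|$ all track correctly through the translation, and that the small parameter is genuinely the \emph{sum} $\eta$ of the two frequency drops rather than either one alone. A secondary technical point is ensuring the ball on which the harmonic approximation is valid (here $B(0,2)$ with $x_1\in B(0,1/4)$) is large enough that all the interior estimates and the Weiss-formula integrations from scale $1/2$ to $1$ at both centers are legitimate; the stated hypotheses are exactly tailored for this, so it should go through without new ideas.
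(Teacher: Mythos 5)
Your proposal follows essentially the same route as the paper's proof: expand $u$ in homogeneous harmonic polynomials about the origin, use the exact translation (Taylor) formula for harmonic polynomials to identify the degree-$\ell$ and degree-$(\ell-1)$ components of $u(x_1+\cdot)$, and control all off-degree coefficients at both centers by $\sqrt{\eta}$ via Lemma \ref{h-1-lemma}, which is precisely how the paper obtains \eqref{sh-1} and \eqref{sh-2}. One small caveat of phrasing: $P_\ell$ does not literally project away the translated higher-degree terms --- they do contribute to the degree-$\ell$ and degree-$(\ell-1)$ pieces --- but each such contribution carries at least one factor of $|x_1|$ together with the derivative-norm and Stirling estimates (e.g.\ $\|\nabla Q\|^2=k(2k+d-2)\|Q\|^2$ for degree-$k$ harmonics), which is exactly the bookkeeping the paper's proof carries out, so your argument goes through as intended.
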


\begin{proof}  
The proof is similar to that of Lemma 3.22 in \cite{Naber-2017}.
We mention  that analogous estimates have been obtained  earlier in \cite[Section 2] {Lin-1999} for stationary harmonic maps,
using monotonicity formulas.

Without loss of generality we assume $u(0)=0$.
By a rotation we may also  assume $x_1=(t, 0, \dots, 0)$ and $t>0$.
Let
$$
u(x) =\sum_{k=0}^\infty a_k \psi_k (x),
$$
where $\psi_k (\omega)$ are spherical harmonics of order $k$ and $ \| \psi_k \|=1$.
Then
$$
u(x+x_1) =\sum_{k=0}^\infty a_k \psi_k (x+x_1).
$$
Since $x_1=(t, 0, \dots, 0)$, 
$$
\psi_k (x+x_1) = \sum_{i=0}^k \frac{t^i}{i !} (\partial_1)^i \psi_k (x).
$$
It follows that
$$
\aligned
u(x+x_1)
 &=\sum_{k=0}^\infty \sum_{i=0}^k a_k \frac{t^i}{i !} (\partial_1)^i \psi_k(x)  \\
& =\sum_{k=0}^\infty \sum_{i=0}^\infty  a_{k+i} \frac{t^i}{i !} (\partial_1)^i \psi_{k+i} (x).
\endaligned
$$
Thus
$$
\aligned
P_\ell (u(x_1+\cdot ))
& =\sum_{i=0}^\infty a_{\ell+i} \frac{t^i}{i !} (\partial_1)^i \psi _{\ell+i} \\
& =a_\ell \psi_\ell 
+ \sum_{i=1}^\infty a_{\ell+i} \frac{t^i}{i !} (\partial_1)^i \psi_{\ell+i}.
\endaligned
$$
Hence,
$$
\aligned
\|P_\ell( u (x_1 +\cdot )) -P_\ell (u)\|
 & \le |t| \sum_{i=1}^\infty  |a_{\ell+i}| \frac{t^{i-1}}{i !} \| (\partial_1)^i \psi _{\ell +i} \|\\
 &
 \le C | t| \sum_{i=1}^\infty |a_{\ell+i} | \frac{t^{i-1}}{i !} i^i 2^{i/2},
 \endaligned
$$
where we have used  the fact that for a homogenous harmonic polynomial $Q$ of degree $k$,
$$
\|\nabla Q\|^2 = k (2k+d-2) \|Q\|^2.
$$
Since $i! \sim \sqrt{2\pi i} (i/e)^i$, we obtain
$$
\aligned
\|P_\ell( u (x_1 +\cdot )) -P_\ell (u)\|
 & \le C |t| \sum_{i=1}^\infty |a_{\ell +i} | \frac{(e \sqrt{2} |t|)^{i-1}}{\sqrt{i}}\\
 &\le C |t| \sqrt{\eta} \|u\|
 \endaligned
$$
if $|t|<  1/4$, where we have used Lemma \ref{h-1-lemma} for the last inequality.

To see \eqref{sh-2}, note that
$$
\aligned
 P_{\ell-1} (u(x_1 +\cdot ))
& =\sum_{i=0}^\infty a_{\ell-1+i} \frac{t^i}{i!} (\partial_1)^i \psi_{\ell -1 +i}\\
&=a_{\ell-1} \psi_{\ell-1} 
+ a_{\ell} t \partial_1\psi_\ell 
+\sum_{i=2}^\infty a_{\ell-1+i} \frac{t^i}{i!} (\partial_1)^i \psi_{\ell -1 +i}.\\
\endaligned
$$
It follows that
$$
\aligned
|t|  |a_\ell | \| \partial_1 \psi_\ell\|
& \le \| P_{\ell-1} (u(x_1 +\cdot ))\|
+ \| a_{\ell-1}  \psi_{\ell-1} \|
+ \sum_{i=2}^\infty
|a_{\ell-1 +i} | \frac{|t|^i}{i!} \| (\partial_1)^i \psi_{\ell-1 +i} \|\\
& \le C \sqrt{\eta}(  \| u\| + \| u(x_1 +\cdot ) -u(x_1) \| )\\
& \le C \sqrt{\eta} \| u\|,
\endaligned
$$
where we have used the observation 
$$
\| u(x_1 +\cdot ) \| \le C  \| u(x_1+\cdot/2)\| \le C  \| u \|_{L^2(B(0, 1))}\le C \| u\|.
$$
\end{proof}

We now transfer the estimates in Lemma \ref{lemma-sh} to solutions of $\mathcal{L}_\e (u_\e)=0$
by harmonic approximation.

\begin{lemma}\label{lemma-s1}
Fix $L \ge 2$.
Let $u_\e \in H^1(B(0, 2))$ be a solution of $\mathcal{L}_\e (u_\e)=0$ in $B(0, 2)$.
Suppose that
\begin{equation}\label{s-0}
\aligned
 \wN(u_\e, 0, 2^{-j}) & \in [ \ell -\delta, \ell + \delta],\\
 \wN(u_\e, x_1,  2^{-j}) & \in [\ell -\delta, \ell + \delta ]
\endaligned
\end{equation}
for $j=0, 1, 2, 3$ and for some $x_1\in B(0, 1/4) $, where 
$\delta \in (0, 1/64]$,
$\ell\in \mathbb{N}$ and $2\le \ell\le L$.
Then, if $0< \e< \e_5(L)$,
\begin{equation}\label{s-00}
\Big\|
\frac{P_\ell (u_\e (x_1+\cdot/4))}{\| P_\ell (u_\e (x_1 +\cdot /4))\|}
-\frac{P_\ell (u_\e (\cdot /4))}{ \| P_\ell (u_\e (\cdot/4))\|}\Big \|
\le C ( \sqrt{\delta} +\sqrt{\e}),
\end{equation}
and
\begin{equation}\label{s-01}
\| x_1 \cdot \nabla P_\ell (u_\e (\cdot/4))\|
\le C (\sqrt{\delta} +\sqrt{\e}) \| P_\ell (u_\e (\cdot/4))\|,
\end{equation}
where $C$ depends on $L$.
\end{lemma}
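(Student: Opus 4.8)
The plan is to reduce \eqref{s-00}--\eqref{s-01} to the harmonic estimates \eqref{sh-1}--\eqref{sh-2} by harmonic approximation, exactly as Lemma \ref{lemma-p1} reduces \eqref{h-2-0} to solutions $u_\e$. First I would normalize: since $\ell\ge 2$, replacing $u_\e$ by $u_\e-u_\e(0)$ changes neither $P_\ell(u_\e(x_1+\cdot/4))$ and $P_\ell(u_\e(\cdot/4))$ nor the doubling indices in \eqref{s-0}, so we may take $u_\e(0)=0$. Then I would apply Theorem \ref{a3-thm} (on a ball of suitable radius) to obtain a harmonic $u_0$ with $u_0(0)=0$, harmonic on a ball large enough to carry out the frequency computations below around both $0$ and $x_1$, and with $\|u_\e-u_0\|_{L^\infty}\le C\sqrt{\e}\,\|u_\e\|_{L^2(\partial B(0,\rho))}$ on a ball containing the spheres $\partial B(0,1/4)$ and $\partial B(x_1,1/4)$, where $\rho$ is the normalizing radius. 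Because \eqref{s-0} squeezes $\wN(u_\e,0,2^{-j})$ and $\wN(u_\e,x_1,2^{-j})$ into $[\ell-\delta,\ell+\delta]$ for $j=0,1,2,3$, all the relevant $L^2$-norms of $u_\e$ and $u_0$ on the spheres of radius $1/4$ about $0$ and about $x_1$, as well as $\|u_\e\|_{L^2(\partial B(0,\rho))}$, are mutually comparable with constants depending only on $L$, and in particular comparable to $\|P_\ell(u_\e(\cdot/4))\|$; the lower bound $\|P_\ell(u_\e(\cdot/4))\|\ge c_L\,\|u_\e\|_{L^2(\partial B(0,1/4))}$ is the $u_\e$-analogue of \eqref{r-p-1}, obtained by running the harmonic-approximation estimate at scale $1/4$ together with Lemma \ref{h-1-lemma}. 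Thus the approximation error, divided by $\|P_\ell(u_\e(\cdot/4))\|$, is $O(\sqrt{\e})$.

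The substantive step is to transfer the squeezed doubling condition to $u_0$ and to bound its frequency gap \emph{linearly} in $\delta$. By Lemma \ref{lemma-d1} (with its $\e$ replaced by $C\sqrt{\e}$), $\wN(u_0,0,s)=\wN(u_\e,0,s)+O(\sqrt{\e})$ and $\wN(u_0,x_1,s)=\wN(u_\e,x_1,s)+O(\sqrt{\e})$ at the dyadic scales $s$ in \eqref{s-0}; combined with the sandwich \eqref{d-02}, $N(u_0,\cdot,s/2)\le\wN(u_0,\cdot,s)\le N(u_0,\cdot,s)$, this pins the frequencies $N(u_0,0,\cdot)$ and $N(u_0,x_1,\cdot)$ at adjacent scales near $\ell$, so the hypotheses of a rescaled copy of Lemma \ref{lemma-sh} hold for $u_0$. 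More importantly, the gap $\eta$ in \eqref{sh-1}--\eqref{sh-2} — a sum, over those scales, of differences $N(u_0,0,s)-N(u_0,0,s/2)$ and $N(u_0,x_1,s)-N(u_0,x_1,s/2)$ — is, by \eqref{d-02}, monotonicity of $N$, and Lemma \ref{lemma-d1} once more, bounded by a sum of differences of $\wN(u_\e,\cdot,\cdot)$ over consecutive scales in \eqref{s-0}, plus $O(\sqrt{\e})$, hence by $C(\delta+\sqrt{\e})$. It is this linear dependence on $\delta$ — rather than merely $\sqrt{\delta}$ — that, once fed through the $\sqrt{\eta}$ in \eqref{sh-1}--\eqref{sh-2}, produces the $\sqrt{\delta}$ on the right of \eqref{s-00}--\eqref{s-01}.

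With its hypotheses in hand, Lemma \ref{lemma-sh} applied to $u_0$ yields $\|P_\ell(u_0(x_1+\cdot))-P_\ell(u_0)\|\le C\sqrt{\eta}\,|x_1|\,\|P_\ell(u_0)\|$ and $\|x_1\cdot\nabla P_\ell(u_0)\|\le C\sqrt{\eta}\,\|P_\ell(u_0)\|$. Since $u_0$ is harmonic, $P_\ell(u_0(x_1+\cdot/4))$ equals $4^{-\ell}P_\ell(u_0(x_1+\cdot))$, and likewise with $x_1$ replaced by $0$, so the common scale factor cancels after normalization and after applying $x_1\cdot\nabla$; with Lemma \ref{n-lemma} and $|x_1|<1/4$ this gives \eqref{s-00}--\eqref{s-01} for $u_0$ in place of $u_\e$, with $\sqrt{\eta}$ on the right. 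Finally I would pass back to $u_\e$: $P_\ell$ is an $L^2(\Sd)$-contraction and, by the first paragraph, $\|u_\e(x_1+\cdot/4)-u_0(x_1+\cdot/4)\|$ and $\|u_\e(\cdot/4)-u_0(\cdot/4)\|$ are $\le C\sqrt{\e}\,\|P_\ell(u_\e(\cdot/4))\|$, so the projections $P_\ell(u_\e(\cdot/4))$ and $P_\ell(u_0(\cdot/4))$ — and likewise at $x_1$ — differ by $O(\sqrt{\e})$ relative to their norms, and Lemma \ref{n-lemma} transports the normalized estimate from $u_0$ to $u_\e$ at the cost of an extra $O(\sqrt{\e})$. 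For \eqref{s-01} one also uses that $P_\ell(u_\e(\cdot/4))-P_\ell(u_0(\cdot/4))$ is a spherical harmonic of order $\ell$, so that $\|x_1\cdot\nabla(P_\ell(u_\e(\cdot/4))-P_\ell(u_0(\cdot/4)))\|\le C_\ell|x_1|\,\|P_\ell(u_\e(\cdot/4))-P_\ell(u_0(\cdot/4))\|\le C\sqrt{\e}\,\|P_\ell(u_\e(\cdot/4))\|$ by the norm equivalence on fixed-degree spherical harmonics. Combining all of this, and absorbing the $\e$-contribution coming from $\sqrt{\eta}$ into a (possibly smaller) positive power of $\e$ — harmless by Remark \ref{h-app} — yields \eqref{s-00}--\eqref{s-01}, once $\e_5(L)$ is chosen small enough to absorb the various constants.

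The main obstacle is the second step: Lemma \ref{lemma-sh} is useful only when the gap $\eta$ is genuinely small, so one must establish $\eta\le C(\delta+\sqrt{\e})$, and that is precisely where the absence of monotonicity of $\wN$ for $u_\e$ bites — it is circumvented by transferring to the harmonic $u_0$ via Lemma \ref{lemma-d1} and invoking the monotonicity sandwich \eqref{d-02}. A lesser but genuine technical point is the choice of the ball on which to run the harmonic approximation: $u_0$ must be harmonic on a ball large enough to accommodate the frequency computations around both $0$ and $x_1$, while the normalizing $L^2$-norm of $u_\e$ must stay comparable to $\|P_\ell(u_\e(\cdot/4))\|$ through the hypothesis \eqref{s-0}.
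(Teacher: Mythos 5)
Your proposal is correct and follows essentially the same route as the paper's proof: approximate $u_\e$ by the harmonic $u_0$ of Theorem \ref{a3-thm}, transfer the squeezed doubling condition to $u_0$ via Lemma \ref{lemma-d1} and the sandwich \eqref{d-02} so that the frequency gap $\eta$ is $O(\delta+\sqrt{\e})$, apply (a rescaled) Lemma \ref{lemma-sh} to $u_0$, use the doubling hypotheses together with \eqref{a4-0} and Lemma \ref{h-1-lemma} to make all the sphere norms about $0$ and $x_1$ comparable to $\|P_\ell(u_\e(\cdot/4))\|$, and pass back to $u_\e$ with Lemma \ref{n-lemma} and the norm equivalence on degree-$\ell$ spherical harmonics. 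Your remark that the $\e$-power coming through $\sqrt{\eta}$ need only be some positive power matches the paper's own level of precision (cf.\ Remark \ref{h-app}), so there is no gap.
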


\begin{proof}
As before, we  assume $u_\e (0)=0$.
Let $u_0$ be the  harmonic function in $B(0, 7/8)$, given by Theorem \ref{a3-thm} with $r=1$.
Then $u_0(0)=u_\e (0)=0$ and 
$$
| u_\e (x) -u_0 (x)| \le C \sqrt{\e} \| u_\e \|
$$
for any $x\in B(0, 3/4)$. Moreover, by Lemma \ref{lemma-d1},
\begin{equation}\label{s-02}
\aligned
 & N(u_0, 0, 1/2^j)
\le \wN(u_0, 1/2^{j-1})
\le \wN (u_\e, 0, 1/2^{j-1}) + C \sqrt{\e},\\
& N(u_0, 0, 1/2^{j-1})
\ge \wN(u_0, 0, 1/2^{j-1})
\ge \wN(u_\e, 0, 1/2^{j-1}) - C\sqrt{ \e}
\endaligned
\end{equation}
for $j\ge 2$.
It follows from  Lemma \ref{h-1-lemma}  and a simple rescaling that if $\e>0$ is sufficiently small,
$$
\aligned
\| u_\e (\cdot /4) -u_0 (\cdot/4)\|
 & \le C \sqrt{\e }  \| u_\e(\cdot /4)\|\\
 &\le C\sqrt{ \e } \| u_0 (\cdot /4)\|\\
 & \le C\sqrt{  \e }  \| P_\ell (u_0(\cdot /4))\|,
 \endaligned
$$
where $C$ depends on $L$.
This implies that
$$
\| P_\ell (u_\e(\cdot /4)) -P_\ell (u_0(\cdot /4))\|
\le C \sqrt{\e } \| P_\ell (u_0 (\cdot /4))\|.
$$
In view of Lemma \ref{n-lemma}, we obtain
\begin{equation}\label{s-03}
\Big\|\frac{  P_\ell (u_\e(\cdot /4))}{ \| P_\ell (u_\e(\cdot /4)) \| } 
 -\frac{P_\ell (u_0(\cdot /4))}{ \| P_\ell (u_0(\cdot /4))\|} \Big\|
\le C\sqrt{ \e } .
\end{equation}
Observe that by \eqref{a4-0},
$$
\aligned
\int_{\partial B(0, 1)} u_\e^2
 & \le C \int_{\partial B(0, 1/8)}u_\e^2
\le C \int_{B(0, 1/4)} u_\e^2 \\
& \le C \int_{B(x_1, 1)} u_\e^2
\le C \int_{\partial B(x_1, 1)} u_\e^2
\le C \int_{\partial B(x_1, 1/4)} u_\e^2,
\endaligned
$$ 
where $C$ depends on $L$ and we have used the fact $B(0, 1/4)\subset B(x_1, 1)$.
The same argument for \eqref{s-03} also  yields 
\begin{equation}\label{s-04}
\Big\|\frac{  P_\ell (u_\e(x_1+ \cdot/4))}{ \| P_\ell (u_\e(x_1+\cdot /4)) \| } 
 -\frac{P_\ell (u_0(x_1+ \cdot /4))}{ \| P_\ell (u_0(x_1+\cdot/4))\|} \Big\|
\le C\sqrt{ \e }.
\end{equation}
By applying Lemma  \ref{lemma-sh} to $u_0(x/4)$, we obtain 
$$
\Big\|\frac{  P_\ell (u_0(x_1+ \cdot/4))}{ \| P_\ell (u_0(x_1+\cdot4)) \| } 
 -\frac{P_\ell (u_0( \cdot/4))}{ \| P_\ell (u_0(\cdot /4))\|} \Big\|
\le C ( \sqrt{\delta} +\sqrt{ \e} ).
$$
This, together with \eqref{s-03} and \eqref{s-04}, gives \eqref{s-00}.

To see \eqref{s-01},  note  that Lemma \ref{lemma-sh} also gives
\begin{equation}\label{s-05}
\| x_1 \cdot \nabla P_\ell (u_0(\cdot/4)) \| \le C (\sqrt{\delta} +\sqrt{\e}) \| P_\ell (u_0(\cdot/4))\| 
\end{equation}
In view of \eqref{s-03} we have
\begin{equation}\label{s-06}
\Big\|\frac{  \nabla P_\ell (u_\e(\cdot/4))}{ \| P_\ell (u_\e(\cdot/4)) \| } 
 -\frac{\nabla P_\ell (u_0(\cdot/4))}{ \| P_\ell (u_0(\cdot /4))\|} \Big\|
\le C \sqrt{\e} .
\end{equation}
The estimate \eqref{s-01} now follows readily from \eqref{s-05} and \eqref{s-06}.
\end{proof}

\begin{thm}\label{20-thm}
Fix $L\ge 2$.
Let $\delta \in (0, 1/64]$,  $\ell \in \mathbb{N}$ and $2\le \ell \le L$.
Let $u_\e\in H^1(B(0, 2))$ be a non-constant solution of $\mathcal{L}_\e (u_\e)=0$ in $B(0,2)$.
Suppose that
\begin{equation}\label{20-0}
\wN(u_\e, 0, 2^{-j}) \in [ \ell -\delta, \ell +\delta] \quad \text{ for } j=0, 1, \dots, J_0+3,
\end{equation}
\begin{equation}\label{20-1}
\wN(u_\e, x_1, 2^{-j}) \in [ \ell -\delta, \ell +\delta] \quad \text{ for } j=0, 1, \dots,  J_1+3,
\end{equation}
where $x_1 \in B(0, 1/4)$ and $x_1 \neq 0$.
Assume that
\begin{equation}\label{20-2}
|x_1| \ge c_0 (2^{-J_0} + 2^{-J_1} ).
\end{equation}
Then, if $0< \e< \e_6(L)$,
\begin{equation}
\| n  \cdot \nabla P_\ell (u_\e (1/4, \cdot ))\|
\le C \Big\{ \sqrt{\delta} +  \min \big( (2^{J_0} \e)^{1/2},  (2^{J_1} \e)^{1/2}\big)  \Big\}  \| P_\ell (u_\e (1/4, \cdot )) \|,
\end{equation}
where $n= x_1/|x_1|\in \Sd$ and $C$ depends on $L$ and $c_0$.
\end{thm}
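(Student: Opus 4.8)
The plan is to rescale so that the point $x_1$ sits at unit scale, apply Lemma~\ref{lemma-s1} there, and then transport the resulting control on the degree-$\ell$ projection back to the scale $1/4$ by means of the turning estimate of Theorem~\ref{p-thm}. We may assume $u_\e(0)=0$, since neither the doubling indices nor $P_\ell$ (for $\ell\ge 2$) are affected by adding a constant to $u_\e$. Note that $Q\mapsto n\cdot\nabla Q$ maps a degree-$\ell$ homogeneous harmonic polynomial to a degree-$(\ell-1)$ one with $\|n\cdot\nabla Q\|\le\sqrt{\ell(2\ell+d-2)}\,\|Q\|$; hence, with $C$ chosen large, the asserted inequality holds automatically once $\min(2^{J_0}\e,2^{J_1}\e)\ge c(L)$, and we may therefore assume $2^{\min(J_0,J_1)}\e<\e_6(L)$ with $\e_6(L)$ sufficiently small (in particular $\e_6(L)\le\e_5(L)$ and $\e_6(L)\le\e_2(L)/2$).

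Set $m=\min\!\big(\min(J_0,J_1),\,\lfloor\log_2(1/(4|x_1|))\rfloor\big)\ge0$, and let $\rho=2^{-m}$, $v(x)=u_\e(\rho x)$, $y_1=x_1/\rho$, so that $\mathcal{L}_{2^m\e}(v)=0$ in $B(0,2^{m+1})\supseteq B(0,2)$; from $|x_1|\ge c_0\,2^{-\min(J_0,J_1)}$ one checks $|y_1|=2^m|x_1|\in[\min(c_0,1/8),1/4]$. If $m=0$, Lemma~\ref{lemma-s1} applied directly (its hypotheses are contained in \eqref{20-0}--\eqref{20-1}, and $\e<\e_5(L)$) gives $\|x_1\cdot\nabla P_\ell(u_\e(1/4,\cdot))\|\le C(\sqrt\delta+\sqrt\e)\|P_\ell(u_\e(1/4,\cdot))\|$, and dividing by $|x_1|=|y_1|$ and using $\sqrt\e\le(2^{\min(J_0,J_1)}\e)^{1/2}$ finishes the proof; so assume $m\ge1$. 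Since $m\le\min(J_0,J_1)$, \eqref{20-0} and \eqref{20-1} translate to $\wN(v,0,2^{-j}),\wN(v,y_1,2^{-j})\in[\ell-\delta,\ell+\delta]$ for $j=0,1,2,3$, while $2^m\e\le2^{\min(J_0,J_1)}\e<\e_5(L)$. Applying Lemma~\ref{lemma-s1} to $v$ (with $y_1$, $2^m\e$, $\delta$), and using $n\cdot\nabla=|y_1|^{-1}y_1\cdot\nabla$ with $P_\ell(v(\cdot/4))=P_\ell(u_\e(\rho/4,\cdot))$, gives
\[
\|n\cdot\nabla P_\ell(u_\e(\rho/4,\cdot))\|\le C\big(\sqrt\delta+(2^m\e)^{1/2}\big)\,\|P_\ell(u_\e(\rho/4,\cdot))\|,
\]
with $C$ depending on $L$ and $c_0$.

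Next I invoke Theorem~\ref{p-thm} with $J=m-1$, so $2^{-J-3}=\rho/4$; its hypotheses hold since $\wN(u_\e,0,1)\le L+1$, the indices at scales $2^{-1},\dots,2^{-m-2}$ lie in $[\ell-\delta,\ell+\delta]\subseteq[\ell-1/64,\ell+1/64]$ (as $m+2\le J_0+3$ and $\delta\le1/64$), and $\e<2^{-m}\e_6(L)\le2^{-m-1}\e_2(L)=2^{-J-2}\e_2(L)$. Since the four indices at scales $2^{-1},2^{-2},\rho/2,\rho/4$ all lie in $[\ell-\delta,\ell+\delta]$, the telescoping bracket in \eqref{p2-1} is at most $4\delta$, and
\[
\Big\|\frac{P_\ell(u_\e(1/4,\cdot))}{\|u_\e(1/4,\cdot)\|}-\frac{P_\ell(u_\e(\rho/4,\cdot))}{\|u_\e(\rho/4,\cdot)\|}\Big\|\le C\big(\delta+(2^m\e)^{1/2}\big).
\]
Writing $\lambda=\|u_\e(1/4,\cdot)\|/\|u_\e(\rho/4,\cdot)\|$ and using again that $Q\mapsto n\cdot\nabla Q$ is bounded,
\[
\|n\cdot\nabla P_\ell(u_\e(1/4,\cdot))\|\le C\,\|u_\e(1/4,\cdot)\|\big(\delta+(2^m\e)^{1/2}\big)+\lambda\,\|n\cdot\nabla P_\ell(u_\e(\rho/4,\cdot))\|.
\]
Harmonic approximation (Theorem~\ref{a3-thm}) together with Lemma~\ref{h-1-lemma}, as in the proof of Lemma~\ref{lemma-s1}, gives $\|u_\e(r,\cdot)-P_\ell(u_\e(r,\cdot))\|\le C(\sqrt\delta+\sqrt{\e/r})\|u_\e(r,\cdot)\|$ for $r=1/4$ and $r=\rho/4$ (note $\e/(\rho/4)=4\cdot2^m\e<C\e_6(L)$), so $\|u_\e(r,\cdot)\|\sim\|P_\ell(u_\e(r,\cdot))\|$ at both scales; hence $\|u_\e(1/4,\cdot)\|\le C\|P_\ell(u_\e(1/4,\cdot))\|$ and $\lambda\|P_\ell(u_\e(\rho/4,\cdot))\|\le C\|u_\e(1/4,\cdot)\|\le C\|P_\ell(u_\e(1/4,\cdot))\|$. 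Substituting, and using $2^m\e\le2^{\min(J_0,J_1)}\e$ and $(2^{\min(J_0,J_1)}\e)^{1/2}=\min\big((2^{J_0}\e)^{1/2},(2^{J_1}\e)^{1/2}\big)$, yields the theorem.

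The hard part is the scale transfer via Theorem~\ref{p-thm}: one must move the degree-$\ell$ projection across the $\sim m$ dyadic scales between $\rho/4$ and $1/4$ while losing only $O\big(\delta+(2^{\min(J_0,J_1)}\e)^{1/2}\big)$ in total. This works only because the turning bound is \emph{linear} in the doubling-index drop, so the per-step drops telescope to $O(\delta)$; a H\"older estimate $\eta^\alpha$ with $\alpha<1$ would blow up when summed over many scales. A secondary, bookkeeping point is the choice of $m$: forcing $m\le\min(J_0,J_1)$ is exactly what makes the rescaled doubling hypotheses of Lemma~\ref{lemma-s1} simultaneously available around $0$ and around $y_1$, and is the origin of the $\min$ in the error term.
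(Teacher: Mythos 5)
Your proposal is correct and follows essentially the same route as the paper: rescale so that $x_1$ lies at unit-size distance (choosing a dyadic scale no deeper than $\min(J_0,J_1)$), apply Lemma \ref{lemma-s1} there to control $\| n\cdot\nabla P_\ell(u_\e(\rho/4,\cdot))\|$, and transfer back to scale $1/4$ through the turning estimate of Theorem \ref{p-thm}. The paper compresses this transfer into the statement that it suffices to prove \eqref{20-3}; you merely make explicit the ingredients it leaves implicit, namely the comparability $\|u_\e(r,\cdot)\|\sim\|P_\ell(u_\e(r,\cdot))\|$ at both scales, the boundedness of $Q\mapsto n\cdot\nabla Q$ on degree-$\ell$ harmonics, and the trivial case when $2^{\min(J_0,J_1)}\e$ is not small.
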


\begin{proof}
By Theorem \ref{p-thm} , it suffices to show that 
\begin{equation}\label{20-3}
\Big \|
\frac{ n\cdot \nabla P_\ell ( u_\e (1/2^{k+3}, \cdot))}{ \| u_\e (1/2^{k+3}, \cdot)\|} \Big\|
\le C\Big\{  \sqrt{\delta} +  \min \big( (2^{J_0} \e)^{1/2} ,   (2^{J_1} \e)^{1/2}\big)  \Big\},
\end{equation}
for some $0 \le k \le J_0$.
To this end, we let $t=2^{-k-1}$ such that $0\le k-1 \le \min(J_0, J_1) $ and $ 2^{-k} \sim |x_1|$, 
and consider the function $\phi (x) =u_\e ( tx)$.
Note that $\mathcal{L}_{\e/t} (\phi)=0$ and
$$
\wN (\phi, 0, r) =\wN (u_\e, 0, tr ),
$$
$$
\wN(\phi, x_1/t,  r) =\wN (u_\e, x_1, tr).
$$
It follows by \eqref{20-0} and \eqref{20-1} that 
$$
\wN(\phi, 0, 2^{-j} ), \wN (\phi, x_1/t, 2^{-j}) \in [\ell -\delta, \ell +\delta]
$$
for $j=0, 1, 2, 3$.
This allows us to apply Lemma \ref{lemma-s1} to $\phi$ to obtain
$$
\| (x_1/t) \cdot \nabla P_\ell (\phi (1/4, \cdot)) \|
\le C ( \sqrt{\delta} + (\e/t)^{1/2} ) \| P_\ell (\phi (1/4, \cdot)) \|,
$$
which gives \eqref{20-3}, as  $P_\ell (\phi(1/4, \cdot))= P_\ell (u_\e (1/2^{k+3} , \cdot))$.
\end{proof}


\section{Approximate tangent plane  and Lipschitz properties}\label{section-6}

Throughout this section we assume $A\in \mathcal{A}(\lambda, \Gamma, M, \mu)$.
We begin with two lemmas on homogeneous harmonic polynomials.

\begin{lemma}\label{lemma-sub-1}
Let $\ell\ge 2$ and  $\psi _\ell $ be a homogeneous harmonic polynomial of degree $\ell$ with $\| \psi_\ell\|=1$.
Let 
\begin{equation}\label{sub-0}
I(\eta, \psi_\ell ) =\big\{ n \in \Sd: \ \| n \cdot \nabla \psi _\ell \|\le \eta \big\}.
\end{equation}
Then for any $\gamma >0$, there exists $\eta=\eta(\gamma, \ell )>0$ such that
\begin{equation}\label{sub}
I(\eta, \psi_\ell  ) \subset \big\{ n \in \Sd: \ \text{dist}(n, V)\le  \gamma  \big\}
\end{equation}
for some subspace $V$ of dimension $d-2$ or less.
\end{lemma}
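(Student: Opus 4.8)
The plan is to take for $V$ the \emph{invariance subspace} of $\psi_\ell$,
\[
W(\psi_\ell) := \big\{ n\in \R^d:\ n\cdot \nabla \psi_\ell \equiv 0 \text{ on } \R^d \big\},
\]
a linear subspace of $\R^d$. Since $n\cdot \nabla\psi_\ell$ is a homogeneous polynomial of degree $\ell-1$, its $L^2(\Sd)$--norm vanishes precisely when $n\in W(\psi_\ell)$; thus $W(\psi_\ell)\cap \Sd$ is exactly the zero set of the continuous function $n\mapsto \|n\cdot\nabla\psi_\ell\|$ on $\Sd$. I would first establish the dimension bound $\dim W(\psi_\ell)\le d-2$, and then deduce \eqref{sub} with $V=W(\psi_\ell)$ by a compactness argument that simultaneously delivers the uniformity of $\eta$ in $\psi_\ell$.

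For the dimension bound, let $n_1,\dots,n_m$ be an orthonormal basis of $W(\psi_\ell)$, completed to an orthonormal basis of $\R^d$; in the corresponding coordinates $\psi_\ell$ is independent of the first $m$ variables, hence is a homogeneous harmonic polynomial of degree $\ell$ in at most $d-m$ variables. If $m\ge d-1$, then, after a further rotation, $\psi_\ell$ would be a function of a single variable, so $\psi_\ell(x)=c\,x_d^{\,\ell}$ with $c\neq 0$ by homogeneity; but then $\Delta\psi_\ell=c\,\ell(\ell-1)x_d^{\,\ell-2}\not\equiv 0$ since $\ell\ge 2$, contradicting harmonicity. Hence $m=\dim W(\psi_\ell)\le d-2$.

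For a \emph{fixed} $\psi_\ell$ the inclusion \eqref{sub} is immediate: the set $\{n\in\Sd:\ \text{dist}(n,W(\psi_\ell))\ge\gamma\}$ is compact and avoids the zero set of $n\mapsto\|n\cdot\nabla\psi_\ell\|$, so that function has a positive minimum $\eta_0$ there, and any $\eta<\eta_0$ works. To obtain $\eta$ depending only on $\gamma$ and $\ell$, I would argue by contradiction: if no such uniform $\eta$ exists, there are $\gamma>0$, $\eta_k\to 0$, and unit degree--$\ell$ harmonic homogeneous polynomials $\psi^{(k)}_\ell$ such that for \emph{every} subspace $V$ of dimension $\le d-2$ some $n\in I(\eta_k,\psi^{(k)}_\ell)$ satisfies $\text{dist}(n,V)>\gamma$. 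Since the unit sphere of the finite--dimensional space $\mathcal{H}_\ell$ of degree--$\ell$ spherical harmonics is compact, after passing to a subsequence $\psi^{(k)}_\ell\to\psi_\ell$ in $\mathcal{H}_\ell$, hence $\nabla\psi^{(k)}_\ell\to\nabla\psi_\ell$ uniformly on $\Sd$, with $\|\psi_\ell\|=1$. Applying the assumed failure to the \emph{fixed} admissible subspace $V=W(\psi_\ell)$ yields $n_k\in I(\eta_k,\psi^{(k)}_\ell)$ with $\text{dist}(n_k,W(\psi_\ell))>\gamma$; a further subsequence gives $n_k\to n\in\Sd$ with $\text{dist}(n,W(\psi_\ell))\ge\gamma$, so $n\notin W(\psi_\ell)$. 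On the other hand $\|n_k\cdot\nabla\psi^{(k)}_\ell\|\le\eta_k\to 0$ while $\|n_k\cdot\nabla\psi^{(k)}_\ell\|\to\|n\cdot\nabla\psi_\ell\|$ by the uniform convergence, forcing $\|n\cdot\nabla\psi_\ell\|=0$, i.e.\ $n\in W(\psi_\ell)$, a contradiction.

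The only real subtlety---and what I expect to be the main obstacle---is that $W(\psi_\ell)$ is not continuous in $\psi_\ell$: its dimension may jump \emph{up} in the limit $\psi^{(k)}_\ell\to\psi_\ell$, so one cannot test the assumed failure against the moving subspaces $W(\psi^{(k)}_\ell)$. Testing instead against the single limiting subspace $W(\psi_\ell)$, which remains admissible because its dimension is still $\le d-2$, is what makes the contradiction go through; the conclusion \eqref{sub} then holds with $V=W(\psi_\ell)$.
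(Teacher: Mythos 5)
Your proof is correct. The paper itself offers no argument for this lemma---it simply cites Naber--Valtorta (\cite{Naber-2017}, Proposition 3.24)---so your write-up supplies a self-contained proof where the paper defers to the literature, and it is in the same compactness spirit as the cited source. Both pillars of your argument check out: (i) the dimension bound $\dim W(\psi_\ell)\le d-2$, since if $\psi_\ell$ depended on at most one variable it would be $c\,x_d^{\ell}$ with $c\neq 0$ (as $\|\psi_\ell\|=1$), which is not harmonic for $\ell\ge 2$; and (ii) the contradiction argument on the unit sphere of the finite-dimensional space of degree-$\ell$ spherical harmonics, where the decisive point---which you correctly isolate---is that the negated statement is tested against the single fixed admissible subspace $V=W(\psi_\ell)$ of the \emph{limit} polynomial rather than against the discontinuously varying subspaces $W(\psi_\ell^{(k)})$, whose dimensions may jump. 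Uniform convergence of $\nabla\psi_\ell^{(k)}$ to $\nabla\psi_\ell$ on $\Sd$ (all norms on a finite-dimensional space being equivalent) then forces $\|n\cdot\nabla\psi_\ell\|=\lim_k\|n_k\cdot\nabla\psi_\ell^{(k)}\|=0$, i.e.\ $n\in W(\psi_\ell)$, contradicting $\text{dist}(n,W(\psi_\ell))\ge\gamma$; note your argument proves exactly the uniform-in-$\psi_\ell$ version $\eta=\eta(\gamma,\ell)$ that the paper needs in Theorem \ref{thm-sub2}. What your route buys is transparency and independence from the reference, plus the small bonus that $V$ can always be taken to be the exact invariance subspace of $\psi_\ell$, which matches how the lemma is exploited later (Lemma \ref{lemma-hs4} and Remark \ref{remark-sp} split $\psi_\ell$ into an invariant part and a small remainder).
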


\begin{proof}
See \cite[Proposition 3.24]{Naber-2017}.
\end{proof}

\begin{lemma}\label{lemma-hs4}
For $\ell\ge 2$, there exists $\sigma_0= \sigma_0 (\ell)>0$ such that if $\psi_\ell=\psi_\ell (x)$ is a homogeneous harmonic polynomial of degree $\ell$ such that
$\| \psi_\ell \|=1$ and
$
\| \partial_i \psi_\ell \|\le \sigma
$
for $i=1, 2, \dots, d-2$ and for some $0< \sigma< \sigma_0$, then $\psi_\ell =\phi_\ell +  \varphi_\ell$, where
$\phi_\ell$ and $\varphi_\ell$ are homogeneous harmonic polynomial of degree $\ell$,
$\phi_\ell $ is a function of $x_{d-1}$ and $x_d$, 
$$
\| \phi_\ell \| \ge 1-C \sigma  \quad \text{ and } \quad 
\| \varphi_\ell \|\le C \sigma,
$$
and  $C$ depends on $\ell$.
\end{lemma}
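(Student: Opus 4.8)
The plan is to reduce the statement to an elementary fact about finite-dimensional linear algebra, using that the space $\mathcal{H}_\ell$ of homogeneous harmonic polynomials of degree $\ell$ on $\R^d$ is finite-dimensional and that differentiation is linear. A soft compactness argument would also yield a qualitative version, but it would not produce the asserted linear dependence on $\sigma$, so I prefer the direct route.

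First I would consider the linear map $T\colon \mathcal{H}_\ell \to (\mathcal{H}_{\ell-1})^{d-2}$ given by $T\psi=(\partial_1\psi,\dots,\partial_{d-2}\psi)$; each $\partial_i\psi$ is again a homogeneous harmonic polynomial, of degree $\ell-1$. The key observation is that $\ker T$ is exactly the subspace $W\subset\mathcal{H}_\ell$ of polynomials depending only on $x_{d-1}$ and $x_d$: if $\partial_1\psi=\dots=\partial_{d-2}\psi=0$ then $\psi=\psi(x_{d-1},x_d)$, and harmonicity then reduces to $\partial_{d-1}^2\psi+\partial_d^2\psi=0$, so every element of $W$ is a homogeneous harmonic polynomial of degree $\ell$ in the two variables $x_{d-1},x_d$. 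I would then decompose $\mathcal{H}_\ell=W\oplus W^\perp$ orthogonally with respect to the $L^2(\Sd)$ inner product.

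Since $T$ restricted to $W^\perp$ is injective and $W^\perp$ is finite-dimensional, there is a constant $c_\ell>0$, depending only on $d$ and $\ell$ and not on $\sigma$, such that
\[
\sum_{i=1}^{d-2}\|\partial_i\psi\|^2 \ge c_\ell^2\,\|\psi\|^2\qquad\text{for all }\psi\in W^\perp.
\]
Given $\psi_\ell$ as in the statement, write $\psi_\ell=\phi_\ell+\varphi_\ell$ with $\phi_\ell\in W$ and $\varphi_\ell\in W^\perp$. Then $\partial_i\psi_\ell=\partial_i\varphi_\ell$ for $i\le d-2$, so the displayed inequality together with the hypothesis $\|\partial_i\psi_\ell\|\le\sigma$ gives $c_\ell^2\|\varphi_\ell\|^2\le(d-2)\sigma^2$, i.e. $\|\varphi_\ell\|\le C\sigma$ with $C=\sqrt{d-2}/c_\ell$. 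Because the splitting is orthogonal and $\|\psi_\ell\|=1$, we have $\|\phi_\ell\|^2=1-\|\varphi_\ell\|^2\ge 1-C^2\sigma^2$, whence $\|\phi_\ell\|\ge 1-C^2\sigma^2\ge 1-C\sigma$ once $\sigma<\sigma_0$, for a suitable $\sigma_0=\sigma_0(\ell)$ (for instance ensuring $C\sigma_0\le 1$). After relabeling $C$ this is exactly the claimed decomposition.

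I do not expect a genuine obstacle here; the only points requiring care are (i) the identification of $\ker T$ with the space of harmonic polynomials in $x_{d-1},x_d$ alone, and (ii) keeping the splitting $\psi_\ell=\phi_\ell+\varphi_\ell$ orthogonal, which is precisely what converts the bound $\|\varphi_\ell\|\le C\sigma$ into the lower bound $\|\phi_\ell\|\ge 1-C\sigma$. One should also record explicitly that $c_\ell$, hence $C$, is determined by $T$ alone and is independent of $\sigma$, so the estimate is genuinely linear in $\sigma$, as the statement demands.
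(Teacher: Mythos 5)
Your argument is correct. Note that the paper does not actually prove this lemma; it simply cites \cite[Lemma 3.27]{Naber-2017}, so your write-up supplies a self-contained proof where the paper defers to the literature. The route you take is the natural one and it is sound: the kernel of $T\colon \mathcal{H}_\ell \to (\mathcal{H}_{\ell-1})^{d-2}$, $T\psi=(\partial_1\psi,\dots,\partial_{d-2}\psi)$, is exactly the (two-dimensional, for $\ell\ge 1$) space $W$ of degree-$\ell$ harmonic polynomials in $x_{d-1},x_d$ alone; the orthogonal splitting $\mathcal{H}_\ell=W\oplus W^\perp$ in $L^2(\Sd)$ together with the finite-dimensional norm-equivalence $\sum_{i\le d-2}\|\partial_i\psi\|^2\ge c_\ell^2\|\psi\|^2$ on $W^\perp$ gives $\|\varphi_\ell\|\le C\sigma$, and orthogonality converts this into $\|\phi_\ell\|\ge\sqrt{1-C^2\sigma^2}\ge 1-C\sigma$ for $\sigma<\sigma_0(\ell)$. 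Two small remarks: the constant $c_\ell$ (hence $C$) also depends on $d$, which is consistent with the paper's convention that all constants may depend on $d$; and, as you observe, a compactness-and-contradiction argument would only give a qualitative smallness of $\varphi_\ell$, whereas your quantitative lower bound for $T$ on $W^\perp$ is precisely what yields the linear dependence on $\sigma$ that the lemma (and its use in Remark \ref{remark-sp}) requires. The hypothesis $\ell\ge 2$ plays no role in your proof, which is harmless since the statement is only applied for $\ell\ge 2$.
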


\begin{proof}

See \cite[Lemma 3.27]{Naber-2017}.
\end{proof}

The subspace $V$ in Lemma \ref{lemma-sub-1} is referred to in \cite{Naber-2017} as an almost invariant subspace for $\psi_\ell$.
The next theorem shows that  homogeneous harmonic polynomials $P_\ell (u_\e(z+\cdot/4))$
share a common almost invariant subspace for $z\in E_\e (\ell, \delta)$,  defined in \eqref{sp2-0}.
Theorems  \ref{thm-sub2} and \ref{thm-sub2} as well as estimates in Remark \ref{remark-sp}
have been proved in \cite{Naber-2017}  for the operator $\mathcal{L}_1$  with Lipschitz coefficients.
Similar estimates have been obtained earlier in \cite[Section 2]{Lin-1999} for energy concentration sets of stationary harmonic maps.
The subspaces $V$ are traditionally called the weak or approximate tangent planes in earlier literatures on the
rectifiability and on the singular sets of harmonic maps and minimal surfaces, see \cite{Mattila, Simon-1996, Lin-1999} and references therein. 

\begin{thm}\label{thm-sub2}
Fix $L\ge 2$ and $\gamma\in (0, 1/2)$.
Let $u_\e\in H^1(B(0, 2))$ be a non-constant solution of $\mathcal{L}_\e (u_\e)=0$ in $B(0, 2)$.
Let
\begin{equation}\label{sp2-0}
E_\e (\ell, \delta) =\Big\{ z\in B(0, 1/4):  \wN(u_\e, z, 2^{-j}) \in [ \ell -\delta, \ell+\delta]  \text{ for } j=0,1, 2, 3 \Big\},
\end{equation}
where $\ell \in \mathbb{N}$ and $2\le \ell \le L$.
Then  there exist $\delta_0=\delta_0(L, \gamma)>0$ and $\e_0=\e_0(L, \gamma )$ with the properties that
if $0<\delta<\delta_0$ and $0< \e< \e_0$, there exist $\eta=\eta(\gamma , L)>0$ and a subspace $V$ of dimension $d-2$ or less such that 
\begin{equation}\label{sp2-3}
I (\eta, \psi_{\ell, z} ) \subset \big\{ n \in \Sd: \ \text{\rm dist} (n, V)\le \gamma \big\}
\end{equation}
for any $z\in E_\e (\ell, \delta)$ and $\psi_{\ell, z} =P_\ell (u_\e (z+ \cdot /4))/ \| P_\ell (u_\e (z+ \cdot/4))\|$.
\end{thm}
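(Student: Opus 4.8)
The plan is to combine the single-point domination estimate \eqref{rad-4} (equivalently Remark \ref{remark-p1}, transported to $u_\e$ via harmonic approximation) with the two-point turning estimate of Theorem \ref{20-thm} to show that \emph{every} $P_\ell(u_\e(z+\cdot/4))$, for $z$ ranging over $E_\e(\ell,\delta)$, is close to one and the same $d-2$ dimensional almost invariant subspace. First I would fix a base point. If $E_\e(\ell,\delta)$ is empty there is nothing to prove, so pick $z_0 \in E_\e(\ell,\delta)$ and set $\psi_0 = \psi_{\ell,z_0}$. By Lemma \ref{lemma-sub-1}, given the target $\gamma$ there is $\eta_0 = \eta_0(\gamma,\ell)$ and a subspace $V$ of dimension $\le d-2$ with $I(\eta_0,\psi_0)\subset\{n:\operatorname{dist}(n,V)\le\gamma/2\}$. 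This $V$ will be the subspace in the statement; it depends only on $z_0$, hence only on $u_\e$, not on the point $z$ being tested.

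Next I would show $\psi_{\ell,z}$ is uniformly close to $\psi_0$ for all $z\in E_\e(\ell,\delta)$. Fix $z\in E_\e(\ell,\delta)$, $z\neq z_0$, and after a translation put $z_0$ at the origin so that $x_1 := z - z_0 \in B(0,1/2)$ (one may shrink the ball $B(0,1/4)$ in \eqref{sp2-0} to $B(0,c)$ so that this lands inside a ball on which the hypotheses of Theorem \ref{20-thm} hold; this only changes the constants). Because $z_0,z$ both lie in $E_\e(\ell,\delta)$, the doubling indices at both centres are trapped in $[\ell-\delta,\ell+\delta]$ down to scale $2^{-3}$, so Lemma \ref{lemma-s1} applies directly: it gives
\begin{equation}
\Big\|\frac{P_\ell(u_\e(x_1+\cdot/4))}{\|P_\ell(u_\e(x_1+\cdot/4))\|}-\frac{P_\ell(u_\e(\cdot/4))}{\|P_\ell(u_\e(\cdot/4))\|}\Big\| \le C(\sqrt\delta+\sqrt\e),
\end{equation}
i.e. $\|\psi_{\ell,z}-\psi_0\|\le C(\sqrt\delta+\sqrt\e)$ with $C=C(L)$. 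Since $\psi\mapsto n\cdot\nabla\psi$ is a bounded linear map on the finite-dimensional space of spherical harmonics of degree $\ell$ (with norm depending only on $\ell$), it follows that $\big|\,\|n\cdot\nabla\psi_{\ell,z}\| - \|n\cdot\nabla\psi_0\|\,\big| \le C(L)(\sqrt\delta+\sqrt\e)$ uniformly in $n\in\Sd$.

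Now I would choose the thresholds. Pick $\delta_0 = \delta_0(L,\gamma)$ and $\e_0 = \e_0(L,\gamma)$ so small that $C(L)(\sqrt{\delta_0}+\sqrt{\e_0}) \le \eta_0/2$, and set $\eta := \eta_0/2$. Then for $0<\delta<\delta_0$, $0<\e<\e_0$, and any $z\in E_\e(\ell,\delta)$: if $n\in I(\eta,\psi_{\ell,z})$, i.e. $\|n\cdot\nabla\psi_{\ell,z}\|\le\eta_0/2$, the comparison above gives $\|n\cdot\nabla\psi_0\| \le \eta_0/2 + \eta_0/2 = \eta_0$, so $n\in I(\eta_0,\psi_0)\subset\{n:\operatorname{dist}(n,V)\le\gamma/2\}\subset\{n:\operatorname{dist}(n,V)\le\gamma\}$. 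This is exactly \eqref{sp2-3}.

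The main obstacle is making sure Lemma \ref{lemma-s1} is genuinely applicable for every pair $z_0,z\in E_\e(\ell,\delta)$: its hypothesis \eqref{s-0} requires the trapping of the doubling index at \emph{both} centres for $j=0,1,2,3$ on the unit-scale ball, while $E_\e(\ell,\delta)$ as defined gives this on $B(0,1/4)$. The fix is a routine rescaling/domain-shrinking argument — replace $B(0,1/4)$ by a small $B(0,c_1)$ in the definition of $E_\e(\ell,\delta)$ (absorbing $c_1$ into the constants $\delta_0,\e_0,\eta$), or equivalently dilate so that $B(z_0,1)$ sits inside $B(0,2)$, and use the rescaling property $\mathcal{L}_\e(u_\e(r\cdot))=\mathcal{L}_{\e/r}$ together with $\wN(u_\e(r\cdot),\cdot,s)=\wN(u_\e,\cdot,rs)$ to restore the normalization. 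A secondary point to handle carefully is that $\delta$ in Lemma \ref{lemma-s1} must be $\le 1/64$, which is already guaranteed by taking $\delta_0\le 1/64$. The bound "dimension $d-2$ or less" is automatic since it is inherited verbatim from Lemma \ref{lemma-sub-1}, and the independence of $V$ from $z$ is built in by freezing the base point $z_0$ at the start.
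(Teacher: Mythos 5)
Your proposal is correct and follows essentially the same route as the paper: fix a base point $z_0$, obtain $V$ and the threshold from Lemma \ref{lemma-sub-1}, use Lemma \ref{lemma-s1} to show $\|\psi_{\ell,z}-\psi_{\ell,z_0}\|\le C(\sqrt{\delta}+\sqrt{\e})$ uniformly for $z\in E_\e(\ell,\delta)$, and absorb this error by choosing $\delta_0,\e_0$ small (the paper works with $I(2\eta,\psi_{\ell,z_0})$ at level $\gamma$ rather than your $\eta_0/2$ at level $\gamma/2$, a purely cosmetic difference). The domain/translation issue you flag is real but minor, and your rescaling fix is exactly the kind of adjustment the paper leaves implicit.
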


\begin{proof}
Fix $z_0 \in E_\e (\ell, \delta)$.
By Lemma \ref{lemma-sub-1}, 
there exist $\eta=\eta(\gamma, \ell)>0$ and a subspace $V$ of dimension $d-2$ or less such that
$$
I(2\eta, \psi_{\ell, z_0}) \subset  \big\{ n \in \Sd: \ \text{\rm dist} (n, V)\le \gamma  \big\}.
$$
By Lemma \ref{lemma-s1},  for any $z\in E_\e (\ell, \delta)$, 
$$
\| \psi_{\ell, z} - \psi_{\ell, z_0} \| \le C (\sqrt{\delta} +\sqrt{\e}).
$$
Thus, if $z\in E_\e (\ell, \delta)$ and $n \in I(\eta, \psi_{\ell, z}) $,
$$
\aligned
\| n \cdot \nabla \psi_{\ell, z_0} \|
& \le \| n \cdot \nabla \psi_{\ell, z} \| + \|\nabla (\psi_{\ell, z} -\psi_{\ell, z_0})  \|\\
&\le \eta
+ C \| \psi_{\ell, z} -\psi_{\ell, z_0} \|\\
& \le  \eta + C (\sqrt{\delta} + \sqrt{\e})\\
&\le2 \eta,
\endaligned
$$
provided that  $\delta_0>0$ and $\e_0>0$ are so small that $C(\sqrt{\delta_0} +\sqrt{\e_0}) \le \eta$.
It follows that $n \in I(2\eta, \psi_{\ell, z_0})$ and thus dist$(n, V) \le \gamma $.
\end{proof}

\begin{thm}\label{thm-sub3}
Fix $L\ge 2$ and $\gamma  \in (0, 1/4)$.
Let $u_\e$ be a non-constant solution of $\mathcal{L}_\e (u_\e)=0$ in $B(0, 2)$.
Let $\{B(x_i, r_i) \}$ be a finite collection of disjoint balls with the following proerties,

\begin{enumerate}

\item

$x_i \in B(0, 1/8)$ and $20 r_i \ge  R\, \e$,

\item

$$
\wN(u_\e, x_i, 2^{-j} ) \in [ \ell -\delta, \ell +\delta]
$$
for $j=0, 1, \dots, J_i+3$ and $2^{-J_i} \le 400 r_i$, where $\ell \in \mathbb{N}$ and $2\le \ell \le L$.

\end{enumerate}
Then, if $0< \e< \e_0(L, \gamma)$,  $0< \delta< \delta_0(L, \gamma)$ and $R\ge (\e_0(L, \gamma))^{-1}$,
 there exists a subspace $V$ of dimension $d-2$ or less such that
\begin{equation}\label{Lip-g}
\text{\rm dist} (x_i-x_k, V) \le \gamma |x_i -x_k|
\end{equation}
for any $i, k$.
\end{thm}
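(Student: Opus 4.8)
The plan is to combine two results already available: the common almost-invariant subspace furnished by Theorem~\ref{thm-sub2}, and the two-point turning estimate of Theorem~\ref{20-thm} applied to each pair of centers. First I would invoke Theorem~\ref{thm-sub2} with $\gamma/2$ in place of $\gamma$, obtaining thresholds $\delta_0 = \delta_0(L,\gamma) > 0$ and $\e_0 = \e_0(L,\gamma) > 0$, a number $\eta = \eta(\gamma,L) > 0$, and one subspace $V$ with $\dim V \le d-2$ such that
\begin{equation*}
I(\eta, \psi_{\ell,z}) \subset \big\{\, n \in \Sd : \ \text{\rm dist}(n, V) \le \gamma/2 \,\big\} \qquad \text{for every } z \in E_\e(\ell, \delta),
\end{equation*}
where $\psi_{\ell,z} = P_\ell(u_\e(z + \cdot/4)) / \| P_\ell(u_\e(z + \cdot/4)) \|$. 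Every center $x_i$ lies in $E_\e(\ell,\delta)$: indeed $x_i \in B(0,1/8) \subset B(0,1/4)$, and property~(2) at $j = 0,1,2,3$ (recall $J_i \ge 0$) gives $\wN(u_\e, x_i, 2^{-j}) \in [\ell-\delta, \ell+\delta]$. Since the assertion is trivial for $i = k$, it therefore suffices to show, for each pair $i \ne k$, that $n_{ik} := (x_i - x_k)/|x_i - x_k|$ lies in $I(\eta, \psi_{\ell, x_k})$; then $\text{\rm dist}(n_{ik}, V) \le \gamma/2$, whence $\text{\rm dist}(x_i - x_k, V) \le (\gamma/2)|x_i - x_k| \le \gamma |x_i - x_k|$.

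To bound $\|n_{ik} \cdot \nabla \psi_{\ell, x_k}\|$ I would apply Theorem~\ref{20-thm} after translating the origin to $x_k$ (a routine reduction: the translated solution is defined on $B(0, 2 - |x_k|) \supset B(0, 15/8)$, which is ample for Theorem~\ref{20-thm} and its supporting lemmas). By decreasing the integers $J_i, J_k$ supplied by property~(2) if necessary---which only restricts the range of trapped scales to $j = 0, \dots, J_i+3$ and $j = 0, \dots, J_k+3$ respectively, so property~(2) is preserved---we may further assume $r_i \le 2^{-J_i} \le 400\, r_i$ and $r_k \le 2^{-J_k} \le 400\, r_k$. The hypotheses of Theorem~\ref{20-thm} with base point $x_k$ and secondary point $x_1 = x_i - x_k$ now hold: $x_i \ne x_k$ and $|x_i - x_k| < 1/4$ because the balls are disjoint and $x_i, x_k \in B(0,1/8)$; the doubling index is trapped in $[\ell-\delta, \ell+\delta]$ at $x_k$ down to scale $2^{-J_k-3}$ and at $x_i$ down to scale $2^{-J_i-3}$; and, by disjointness, $|x_i - x_k| \ge r_i + r_k \ge (2^{-J_i} + 2^{-J_k})/400$, so the separation hypothesis holds with $c_0 = 1/400$. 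Theorem~\ref{20-thm} then gives
\begin{equation*}
\|n_{ik} \cdot \nabla \psi_{\ell, x_k}\| \le C(L) \Big\{ \sqrt{\delta} + \min\big( (2^{J_i}\e)^{1/2}, (2^{J_k}\e)^{1/2} \big) \Big\}.
\end{equation*}

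Finally, since $2^{-J_i} \ge r_i$ we have $2^{J_i}\e \le \e/r_i$, and the condition $20\, r_i \ge R\e$ gives $\e/r_i \le 20/R$ (and likewise $2^{J_k}\e \le 20/R$), so the right-hand side above is at most $C(L)\{\sqrt{\delta} + (20/R)^{1/2}\}$. Choosing $\delta_0(L,\gamma)$ small (and no larger than $1/64$), $\e_0(L,\gamma)$ small (and no larger than the threshold $\e_6(L)$ of Theorem~\ref{20-thm}), and demanding $R \ge \e_0(L,\gamma)^{-1}$ so that $C(L)\{\sqrt{\delta_0} + (20/R)^{1/2}\} \le \eta$, we conclude $n_{ik} \in I(\eta, \psi_{\ell, x_k})$, which finishes the proof. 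The step requiring the most care is the interplay in the middle paragraph: the homogenization error $(2^{J_i}\e)^{1/2}$ in the turning estimate is kept small only because the two quantitative inputs of properties (1) and~(2)---that $2^{-J_i} \le 400\, r_i$, so no relevant scale lies far below $r_i$, and that $20\, r_i \ge R\e$, so the ball is not too small relative to $\e$---cooperate, while the disjointness of the balls at the same time provides the separation $|x_i - x_k| \ge r_i + r_k$ needed to invoke Theorem~\ref{20-thm}. The genuinely geometric content---that the centers $x_i$ cluster near a plane of dimension at most $d-2$---is already contained in Theorems~\ref{20-thm} and~\ref{thm-sub2}, so the present argument is essentially an assembly of them.
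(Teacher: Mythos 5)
Your proposal is correct and follows essentially the same route as the paper's own proof: Theorem \ref{thm-sub2} supplies the common subspace $V$ and the threshold $\eta$, and Theorem \ref{20-thm}, applied with base point one center and $x_1$ the difference of two centers, shows the unit vector $(x_i-x_k)/|x_i-x_k|$ lies in $I(\eta,\psi_{\ell,x_k})$, using the disjointness for the separation hypothesis and the bounds $2^{-J_i}\le 400\,r_i$, $20\,r_i\ge R\e$ to control the error term. Your extra bookkeeping (translating the base point and shrinking $J_i$ so that $r_i\le 2^{-J_i}\le 400\,r_i$, which justifies replacing $(2^{J_i}\e)^{1/2}$ by $(\e/r_i)^{1/2}$) is a correct and slightly more careful rendering of steps the paper leaves implicit.
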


\begin{proof}
Note that $\{ x_i\}\subset  E_\e (\ell, \delta)$, where $E_\e (\ell, \delta)$ is defined by \eqref{sp2-0}.
Thus, by Theorem \ref{thm-sub2}, there exist $\eta_0>0$ and  a subspace $V$ of dimension $d-2$ or less such that
\begin{equation}\label{sub3-1}
I(\eta_0, \psi_{\ell, x_i}) \subset \big\{ n \in \Sd: \ \text{\rm dist} (n, V) \le \gamma \big\}
\end{equation}
 for any $x_i$.
Let $x_i\neq x_k$ be two  centers of balls and $n=(x_k-x_i)/ |x_k- x_i|$.
We will show that $n\in I (\eta_0, \psi_{\ell, x_i})$.
By \eqref{sub3-1} this implies  that   dist$(n, V) \le \gamma $, from which \eqref{Lip-g} follows.

To see $n \in I (\eta_0, \psi_{\ell, x_i}) $, 
we use Theorem \eqref{20-thm}. Note that  $|x_i -x_k|< 1/4$.
Since $B(x_i, r_i)\cap B(x_j, r_j) \neq \emptyset$, 
$$
|x_i -x_k | \ge r_i + r_k \ge c_0 (2^{-J_i} + 2^{-J_k}).
$$
It follows by Theorem \ref{20-thm} that 
$$
\aligned
\| n\cdot \nabla \psi_{\ell, x_i} \|
 &\le C \big\{ \sqrt{\delta} + \min \big( (\e/r_i)^{1/2}, (\e/r_k)^{1/2} \big)  \big\} \\
&\le C \big\{ \sqrt{\delta_0} + \sqrt{\e_0} \big\}\\
&\le \eta_0,
\endaligned
$$
provided that $ C (\sqrt{\delta_0} + \sqrt{\e_0}) \le \eta_0$.
\end{proof}

\begin{remark}\label{remark-sp}
Suppose that the conclusion \eqref{Lip-g} in Theorem \ref{thm-sub3} 
 holds for a subspace of dimension $d-2$, but not for any subspace
of dimension $d-3$ or less.
It follows from the proof of Theorem \ref{thm-sub3} that for any $x_i$ and $\eta_0>0$,
\eqref{sub3-1} fails to hold for any subspace of dimension $d-3$ or less.
This implies that
there exists an orthonormal set $\{n_1, n_2, \dots, n_{d-2} \}$ in $I(\eta_0, \psi_{\ell, x_i})$, where  $0< \eta_0< \sigma_0$ and
$\sigma_0=\sigma_0(L)>0$ is given by Lemma \ref{lemma-hs4}.
As a result, 
 we may write $\psi_{\ell, x_i} = \phi_\ell +\varphi_\ell$ such that
$\phi_\ell$ and $\varphi_\ell $ are homogeneous harmonic polynomials,
$\phi_\ell$ is invariant with respect to $V_{d-2}=\text{span} \{n_1, \cdots, n_{d-2} \}$, and that
$$
\|\phi_\ell \|\ge 1-C \eta_0  \quad \text{ and } \quad \| \varphi_\ell \| \le C \eta_0.
$$
It follows that for any $x\in B(0, 2)$,
$$
\aligned
|\nabla \psi_{\ell, x_i}  (x)|
& \ge |\nabla \phi_\ell (x) | - |\nabla \varphi_\ell (x)|\\
& \ge 2\ell  \{ \text{dist}(x, V_{d-2})\}^{\ell-1}  \| \phi_\ell \|
- C \| \varphi_\ell \|\\
&\ge    \{ \text{dist}(x, V_{d-2})\}^{\ell-1}  - C\eta_0.
\endaligned
$$
Also, observe that  by\eqref{s-00} ,
\begin{equation}\label{s-000}
\| \nabla \psi_{\ell, x_k} -\nabla \psi_{\ell, x_i} \|_{L^\infty(B(0, 1))}
\le C ( \sqrt{\delta} +\sqrt{\e}).
\end{equation}
As a result, we obtain
\begin{equation}\label{sp-remark}
|\nabla {\psi}_{\ell, x_k}  (x)|
\ge   \big\{   \text{dist}(x, V_{d-2})\big\}^{\ell-1}  - C\eta_0-C \sqrt{\delta} -C \sqrt{\e} 
\end{equation}
for any $x\in  B(0, 1)$ and for any $x_k$.

By   combining   \eqref{sp-remark} with  Theorem \ref{p5-thm}, we see that
\begin{equation}\label{sp-final}
\aligned
\frac{2^{-j+ m(\ell-1) } | \nabla u_\e (x_k + \omega/2^{j+3+m })| }{ \| u_\e (x_0 + \omega/2^{j+3})\| }
\ge c \{ \text{dist} (\omega, V_{d-2}) \}^{\ell-1}-C \eta_0
-C_m (\sqrt{\delta} + \sqrt{2^J \e} )
\endaligned
\end{equation}
for any $\omega\in \Sd$,  $0\le j \le J_k$ and $m\ge 0$.
Furthermore, by applying the estimate above to  the function $u_\e (tx)$ for $1\le t \le 2$,
we may conclude that if
\begin{equation}\label{sp-f1}
\wN (u_\e, z, r) \in [ \ell -\delta, \ell +\delta] \quad \text{ for } r_0 \le r \le 1,
\end{equation}
where $r_0 \ge C_0 \e$, then if $0< \delta< \delta_0 (L)$ and $0< \e< \e_0 r $,
\begin{equation}\label{sp-f2}
\frac{ r2^{m(\ell-1)} | \nabla u_\e (z +2^{-m} r\omega )| }{ \| u_\e (z + r \cdot  ) \| }
\ge c \{ \text{dist} (\omega, V_{d-2}) \}^{\ell-1}
-C_m (\eta_0+ \sqrt{\delta} + \sqrt{ \e_0} )
\end{equation}
for any $\omega\in \Sd$ and $2r_0\le  r \le 1/4$.
In particular, this implies that  $u_\e$ has no critical points in the set
\begin{equation}\label{sp-f3}
\Big\{ x\in B(x_j, 1/8) \setminus B(x_j , r_j/64): \
\text{dist} ((x-x_j) /|x-x_j|, V_{d-2}) \ge c (\eta_0+ \sqrt{\delta} + \sqrt{\e_0 }) \Big\}.
\end{equation}
Finally, note that since 
$$
V_{d-2} \cap \Sd \subset \big\{ n\in \Sd: \text{dist}(n, V)\le \gamma\big \}, 
$$
we have
$$
\big\{ n \in \Sd: \text{\rm dist}(n, V) \le \gamma \big\}
\subset \big\{n \in \Sd: \text{\rm dist}(n, V_{d-2}) \le c_0 \gamma \big\}
$$
for some $c_0$ depending only on $d$.
As a result, we may  replace the subspace $V$ in Theorem \ref{thm-sub3}  by $V_{d-2}$.
\end{remark}


\section{Proof of Theorem \ref{main-thm}}\label{section-7}

We   fix $L\ge 2$ and let $\e_0=\e_0(L)\in (0, 1/4)$ and $\delta_0=\delta_0(L) \in (0, (1/64))$ be given by Theorem \ref{thm-sub3} with $\gamma=(1/10)$.
We also assume that $\e_0$ is so small that Theorems \ref{d1-thm},  \ref{fq-thm} and \ref{low-L} hold.

\begin{lemma}\label{lemma-cover}
Let $A\in \mathcal{A}(\lambda, \Gamma, M, \mu)$. 
Let  $u_\e\in H^1(B(0, 2))$ be a non-constant solution of $\mathcal{L}_\e (u_\e) =0 $ in $B(0, 2)$.
Suppose that 
\begin{equation}\label{7-0-0}
\wN (u_\e, x, 1) \le \ell + \delta_0 \quad \text{ for any } x\in B(0, 1/2)\cap \mathcal{C} (u_\e),
\end{equation}
where $\ell \in \mathbb{N}$ and $2\le \ell \le L$.
Then,  if  $0< \e< \e_0/2$, 
 there exists a sequence of balls $\{B(x_i, r_i) \}$ such that $x_i \in B(0, 1/2)\cap \mathcal{C} (u_\e)$, $0< r_i\le (1/2)$, and

\begin{enumerate}

\item
\begin{equation}\label{7-0-1}
\mathcal{C}(u_\e)\cap B(0, 1/2)  \subset \bigcup_i B(x_i, r_i/4);
\end{equation}

\item

\begin{equation}\label{sum}
\sum_i r_i^{d-2} \le C;
\end{equation}

\item

For each ball $B(x_i, r_i)$, either $ \e_0^{-1} \e\le   r_i\le 6  \e_0^{-1} \e$, or $ 6 \e_0^{-1} \e  < r_i \le (1/2)$ and 
\begin{equation}\label{7-0-2}
\wN (u_\e, y, c r_i) \le \ell -1 +\delta_0 \quad \text{ for any } y \in B(x_i, r_i/2) \cap \mathcal{C}(u_\e)\cap B(0, 1/2),
\end{equation}
\end{enumerate}
where $C$ and $c$ depend on $L$.
\end{lemma}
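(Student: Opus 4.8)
The plan is to run a Vitali covering at a ``minimal radius'' scale, use Theorem~\ref{thm-sub3} to confine the selected centers to a Lipschitz graph of dimension $\le d-2$, and then invoke Theorem~\ref{fq-thm} together with the rigidity of the turning estimates (Remark~\ref{remark-sp}) to produce the drop of the doubling index. Concretely, for each $x\in\mathcal C':=\mathcal C(u_\e)\cap B(0,1/2)$ I would attach the minimal radius $r^*(x)=\max\{r_0(x),\e_0^{-1}\e\}$, with $r_0(x)$ as in \eqref{rad-1} for the given $\ell$ and $\delta_0$. Hypothesis \eqref{7-0-0} together with the doubling inequality of Theorem~\ref{d1-thm} (and the ball-average version from \cite{LS-Nodal} to reach the scales near $1$) gives $\wN(u_\e,x,r)\in[\ell-\delta_0,\ell+\delta_0]$ for $r^*(x)<r<1/2$ and $\wN(u_\e,x,r)\le\ell+\delta_0\le L+1$ for $\e_0^{-1}\e<r\le 1$. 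A Vitali $5r$-covering of $\{B(x,r^*(x)/20)\}_{x\in\mathcal C'}$ then extracts a countable disjoint subfamily $\{B(x_i,r_i/20)\}$ with $r_i:=r^*(x_i)$ and $\mathcal C'\subset\bigcup_i B(x_i,r_i/4)$; this is (1), and by construction each $r_i$ equals either $\e_0^{-1}\e$ or $r_0(x_i)>\e_0^{-1}\e$.

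For (2) I would apply Theorem~\ref{thm-sub3} to the disjoint collection $\{B(x_i,r_i/20)\}$: its hypotheses hold since $20\cdot(r_i/20)=r_i\ge\e_0^{-1}\e$ and since the trapping of $\wN(u_\e,x_i,2^{-j})$ holds at every dyadic scale down to $\sim r_i$ (replacing $B(0,1/2)$ by $B(0,1/8)$ via a dilation, or applying the theorem on a slightly larger ball, is harmless). This yields a subspace $V$ with $\dim V\le d-2$ through whose $(1/10)$-Lipschitz graph $\Sigma$ all the $x_i$ pass. Since $\mathcal H^{d-2}(\Sigma\cap B(0,1))\le C$ while $\mathcal H^{d-2}(\Sigma\cap B(x_i,r_i/20))\ge c\,r_i^{d-2}$ (Ahlfors lower regularity of a Lipschitz graph), disjointness of the $B(x_i,r_i/20)$ gives $\sum_i r_i^{d-2}\le C$.

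For (3): if $r_i=\e_0^{-1}\e$ we are in the first alternative. If $r_i=r_0(x_i)>\e_0^{-1}\e$, I would first establish the crucial claim that the drop is uniform over $B(x_i,r_i/2)$, i.e. every $y\in B(x_i,r_i/2)\cap\mathcal C'$ satisfies $r_0(y)\ge c(L)\,r_i$. Granting it, pick $r'_y\in[c(L)r_i/2,\,r_0(y))$ with $\wN(u_\e,y,r'_y)<\ell-\delta_0$, apply Theorem~\ref{fq-thm} (with $\delta_1=\delta_0$, base scale $2r'_y$), and then propagate downward by Theorem~\ref{d1-thm} applied with $\ell-1$ in place of $\ell$, obtaining $\wN(u_\e,y,c\,r_i)\le\ell-1+\delta_0$ for a suitable $c=c(L)$; this is (3). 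The claim itself I would prove by harmonic approximation at scale $\sim r_i$: from $\wN(u_\e,x_i,r')<\ell-\delta_0$ for some $r'\in[r_i/2,r_i)$ and Lemma~\ref{lemma-d1}, $N(u_0,x_i,r'')<\ell-\delta_0/2$ for $r''\sim r_i$; quantifying this via the spherical-harmonic expansion, and using the a priori upper bound $\wN(u_\e,\cdot,\cdot)\le\ell+\delta_0$ to rule out a high-degree tail, $u_0$ carries a $\sqrt{\delta_0}$-proportion of degree-$\le\ell-1$ content at scale $r''$ around $x_i$, which by a Remez-type comparison of polynomials on nearby balls transfers to a comparable statement around every $y\in B(x_i,r_i/2)$ -- here the confinement of critical points near $x_i+V_{d-2}$ in Remark~\ref{remark-sp} is used to reduce to $y$ essentially on the axis or essentially at $x_i$ -- whence $N(u_0,y,\rho)<\ell-\delta_0$, and back on $u_\e$, $\wN(u_\e,y,\rho)<\ell-\delta_0$, for some $\rho\sim\delta_0 r_i$. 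Choosing $c=c(L)$ smaller than all the implicit constants finishes the proof.

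The step I expect to be the main obstacle is exactly this uniformity: that the ``drop scale'' $r_0(\cdot)$ is comparable throughout $B(x_i,r_i/2)$. It cannot be circumvented -- if some critical point $y$ there had $r_0(y)\ll r_i$, then $\wN(u_\e,y,cr_i)\ge\ell-\delta_0>\ell-1+\delta_0$ and (3) would fail -- and its proof is where the harmonic-frequency machinery of Section~\ref{section-3} must be combined with the turning estimates and the approximate-tangent-plane structure of Sections~\ref{section-4}--\ref{section-6}. Everything else (the Vitali selection, the graph-packing bound, and the passage between $\wN$ and the harmonic frequency via Lemma~\ref{lemma-d1}) is routine.
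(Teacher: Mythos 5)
Your outline reproduces Steps 1--2 of the paper's argument (the minimal radius $r^*(x)=\max\{r_0(x),\e_0^{-1}\e\}$, a Vitali selection at scale $r^*/20$, Theorem \ref{thm-sub3} to place the selected centers on a Lipschitz graph over a subspace of dimension $\le d-2$, and the packing bound \eqref{sum} from disjointness), and your use of Theorem \ref{fq-thm} to convert ``$\wN<\ell-\delta_0$ at some scale $\gtrsim r_i$'' into the drop \eqref{7-0-2} is also how the paper proceeds. But the step you flag as the ``main obstacle'' is not merely hard: the claim that $r_0(y)\ge c(L)\,r_i$ for \emph{every} critical $y\in B(x_i,r_i/2)$ and \emph{every} Vitali-selected center $x_i$ is false, so the route through it cannot be repaired by harmonic approximation or a Remez-type argument. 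A concrete mechanism (already for harmonic functions, $\e$ small): with $z=x_1+ix_2$ and $\ell\ge 3$, let $u(x)=\mathrm{Re}\,z^{\ell}+s\,x_3\,\mathrm{Re}\,z^{\ell-1}$ with $s\sim\sqrt{\delta_0}$. Every point of the plane $\{x_1=x_2=0\}$ is critical; at the origin $u$ is homogeneous of degree $\ell$, so $\wN(u,0,r)\equiv\ell$ and $r_0(0)=0$, while at $x_i=(0,0,t,0,\dots,0)$ the translated function acquires a degree-$(\ell-1)$ component of size $\sim st$, so $\wN(u,x_i,r)<\ell-\delta_0$ for all $r\lesssim \delta_0^{-1/2}st$, i.e.\ $r_0(x_i)\gtrsim 2t$ and $0\in B(x_i,r_0(x_i)/2)$. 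A greedy Vitali selection (largest radii first) would pick such an $x_i$, and then \eqref{7-0-2} fails for its ball, exactly as you yourself observe. So the conclusion to draw is not that the uniformity must be provable, but that the selection itself must exclude such centers.

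This is precisely where the paper's proof diverges from your plan. It splits $\mathcal{C}(u_\e)\cap B(0,1/2)$ into a ``good'' set $\mathcal{C}_g$, defined by the very comparability you are trying to prove ($r^*(y)\ge r^*(x)/3$ for all critical $y\in B(x,r^*(x))$), and a ``bad'' set $\mathcal{C}_b$; the Vitali selection is performed only over $\mathcal{C}_g$, so the drop \eqref{7-0-2} (via Theorem \ref{fq-thm}) holds for those balls by construction. Points of $\mathcal{C}_b$ not covered by the good balls are then handled by a chain/stopping argument: iterating ``there is a nearby critical point with $r^*$ smaller by a factor $3$'' terminates (since $r^*\ge\e_0^{-1}\e$) at a good center, which shows every $y\in\mathcal{C}_b$ lies within $\sim r^*(y)$ of some selected $x_i$ with $r_i<r^*(y)$. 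One then covers the leftover set by a second family $B(y_j,t_j)$ with $t_j=\frac1{10}\min_k|y_j-x_k|$, verifies \eqref{7-0-2} for these balls by showing every critical $z\in B(y_j,t_j)$ has $r^*(z)\ge 4t_j>\e_0^{-1}\e$ (distinguishing $z\in\mathcal{C}_b$ and $z\in\mathcal{C}_g$), and proves $\sum_j t_j^{d-2}\le C$ by a separate packing argument: if $\dim V\le d-3$ a neighborhood count along the Lipschitz graph suffices, while if $\dim V=d-2$ one needs Remark \ref{remark-sp} (absence of critical points away from the approximate tangent plane) to get finite overlap of the projections $\mathbb{P}(B(y_j,t_j/20))$ onto $V$. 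None of this second family, nor the Step 4--5 verifications for it, appears in your proposal, and it cannot be dispensed with.
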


\begin{proof}
We adapt a covering argument originated in  \cite{Naber-2017}.
The argument is modified for our homogenization problem and streamlined for clear exposition.

{\bf Step 1. }
For $x\in  B(0, 1/2)\cap \mathcal{C}(u_\e)$, define
\begin{equation}\label{7-0-3}
r^* (x)= \sup \big\{ \e_0^{-1} \e  < s\le 1/2: \wN (u_\e, x, s) \le  \ell -\delta _0 \big\}.
\end{equation}
If no such $s$ exists, i.e., $\wN(u_\e, x, s)>  \ell-\delta_0$ for all $\e_0^{-1} \e < s\le 1/2$, define $r^* (x) =\e_0^{-1} \e$. 
It follows from Theorem \ref{d1-thm}  that if $r^*(x)< 1/2$,
\begin{equation}\label{7-00-3a}
\ell -\delta_0 \le  \wN (u_\e, x, r) \le \ell +\delta_0   \quad \text{ for } \ \  r^*  (x) \le   r\le  1/2, 
\end{equation}
and if $r^*(x)> \e_0^{-1} \e$, 
\begin{equation}\label{7-00-3b}
\wN(u_\e, x, r) \le  \ell -\delta_0 \quad \text{ for } \ \ \e_0^{-1} \e/2 \le  r\le  r^* (x)/2.
\end{equation}
Let
\begin{equation}\label{7-0-5}
\mathcal{C}_g (u_\e)
= \Big\{ x\in \mathcal{C}(u_\e)\cap B(0, 1/2): \ r^* (y) \ge r^*(x)/3 \text{ for any } y \in B(x, r^*(x)) \cap \mathcal{C}(u_\e)\cap B(0, 1/2) \Big\}
\end{equation}
and
$$
\mathcal{C}_b (u_\e) =\mathcal{C}(u_\e) \cap B(0, 1/2) \setminus \mathcal{C}_g (u_\e).
$$
Consider the cover of $\mathcal{C}_g(u_\e)$ by
$$
\Big\{ B(x, r^*(x)/20) : x\in \mathcal{C}_g (u_\e) \Big\}.
$$
Let $\{ B(x_i, r^*(x_i)/4) \}$ be a Vitali subcover, i.e.,
\begin{equation}\label{7-0-6}
\mathcal{C}_g (u_\e) \subset \bigcup_i B(x_i,  r^*(x_i)/4)
\end{equation}
and $B(x_i, r^*(x_i)/20) \cap B(x_j, r^*(x_j )/20 )=\emptyset$ for $i\neq j$.

Let $r_i= r^*(x_i)$. Then either $\e_0^{-1} \e\le r_i\le 6 \e_0^{-1} \e $ or $r_i> 6  \e_0^{-1} \e$.
In the second case, by the definition of $\mathcal{C}_g(u_\e)$,
$$
r^* (y)\ge r_i /3 \quad \text{ for any } y \in B(x_i, r_i)\cap \mathcal{C}(u_\e)\cap B(0, 1/2).
$$
Thus,  $r^* (y) > 2  \e_0^{-1} \e$ for any
 $y\in B(x_i, r_i)\cap \mathcal{C} (u_\e)\cap B(0, 1/2)$. As a result,  since $\e_0^{-1} \e< (1/6) r_i \le  r^* (y)/2$,   in view of \eqref{7-00-3b}, we obtain 
$$
\wN(u_\e, y, r_i/6)  \le \ell -\delta_0.
$$
Hence, in view of  Theorem \ref{fq-thm}, we have proved that either $\e_0^{-1}  \e  \le r_i \le 6\e^{-1}_0 \e $, or $r_i>6  \e^{-1}_0 \e$ and 
$$
\wN (u_\e, y, c r_i) \le \ell -1 +\delta_0 \quad \text{ for any } y \in B(x_i, r_i)\cap \mathcal{C}(u_\e) \cap B(0, 1/2),
$$
where $c= \delta_0/(48\ell)$.

\medskip

{\bf Step 2.} To show \eqref{sum}, 
we  divide the subcover in \eqref{7-0-6} into several groups so that for any $x_i, x_j$ in the same group, we have
$|x_i -x_j| \le 1/(32)$.
Since $B(x_i, r_i/20)\cap B(x_j, r_j /20)=\emptyset$ for $i\neq j$, 
it suffices to consider those $x_i$'s for which $r_i\le (1/32)$.
As a result,  by \eqref{7-00-3a}, we have
$$
\ell -\delta_0\le \wN(u_\e, x_i, t) \le \ell+\delta_0 \quad \text{ for } r_i \le  t \le (1/2).
$$
It follows by Theorem \ref{thm-sub3}   that there exists a subspace $V$ of $\R^d$ of dimension $d-2$ or less such that
$$
\text{dist} (x_i -x_j, V ) \le \frac{1}{10} |x_i -x_j|
$$
for any $x_i, x_j$ in the same group.
This implies that the centers  $\{ x_i \}$ of  balls for each group  lie
 on the  graph of a Lipschitz  function $F: V \to V^\perp$ with Lipschitz  norm less than 
$1/9$.
Consequently, using the fact that $\{ B(x_i, r_i/20) \}$ are disjoint, we obtain \eqref{sum}.

\medskip

{\bf Step 3.}
We construct additional balls with the desired properties  to cover 
$$
\mathcal{C}_b(u_\e)\setminus \cup_i B(x_i, r_i/4).
$$

Let $y\in \mathcal{C}_b (u_\e)$.
There exists $z_0\in B(y,  r^*(y))\cap \mathcal{C}(u_\e)\cap B(0, 1/2)$ such that $ r^* (z_0) < r^*(y)/3$.
If $z_0 \notin \mathcal{C}_g(u_\e)$, then there exists $z_1 \in B(z_0,  r^*(z_0))\cap \mathcal{C} (u_\e)\cap B(0, 1/2)$ such that
$r^*(z_1) < r^*(z_0)/3$. If $z_1 \notin \mathcal{C}_g (u_\e)$, then there exists $z_2 \in B(z_1,  r^*(z_1))\cap \mathcal{C}(u_\e)\cap B(0, 1/2) $, such that
$r^*(z_2) < r^*(z_1)/3$. 
Since $r^*(z) \ge \e_0^{-1}  \e$, 
this process stops in a finite number of steps and yields  some $x\in \mathcal{C}_g(u_\e) $ with properties that
$$
|y-x|\le |y-z_0| + |z_0-z_1|+\cdots 
<  \sum_{k=0}^\infty 3^{-k} r^*(y) = 3 r^*(y)/2,
$$
and $r^*(x)< r^*(y)/3$. Thus $x\in B(x_i,  r_i/4)\cap \mathcal{C}_g(u_\e)$ for some $i$. Since $r^*(x)\ge r_i/3$, we obtain $r_i < r^*(y)$.
It follows that
$$
| y-x_i|
\le |y-x| + |x-x_i|
< 3 r^*(y)/2 + r_i/4 \le 2 r^*(y).
$$
For $y \in \mathcal{C}_b (u_\e)$, define
$$
t(y) =\frac{1}{10} \min_k  | y-x_k | <  r^*(y)/ 5.
$$
Cover $\mathcal{C}_b (u_\e)\setminus \cup_i B(x_i, r_i/4) $ by 
$$
\big\{ B(y, t(y)/20): y \in \mathcal{C}_b (u_\e)\setminus \cup_i B(x_i,  r_i/4)  \big\}
$$
Choose a Vitali subcover 
$$
\{ B(y_j,  t_j/4): j \in J \},
$$
 where $t_j = t(y_j)$, such that $B(y_j, t_j/20) \cap B(y_k, t_k/20) =\emptyset$ for $j \neq k$, and
 $$
 \mathcal{C}_b (u_\e) \setminus \bigcup_i B(x_i, r_i/4)\subset \bigcup_j B(y_j, t_j/4).
 $$
 Thus
 \begin{equation}\label{7-0-10}
 \mathcal{C}(u_\e)\cap B(0, 1/2)
 = \mathcal{C}_g (u_\e) \cup \mathcal{C}_b (u_\e)
 \subset
 \bigcup_i  B(x_i, r_i/4) \cup \bigcup_j B(y_j, t_j/4).
 \end{equation}
 
{\bf Step 4. } We  verify the condition \eqref{7-0-2} for $B(y_j, t_j)$.
Note that $t_j= t(y_j) \ge (1/64) r_i \ge (1/64)\e_0^{-1}  \e$, since $y_j \notin B(x_i, r_i/4)$.
 Suppose $t_j > \e^{-1}_0 \e$. For any $z\in B(y_j, t_j)\cap \mathcal{C}(u_\e) \cap B(0, 1/2)$,
 $$
 |z-x_i|
 \ge |y_j -x_i|- |z-y_j|
 \ge |y_j -x_i| - t_j
 \ge \frac{9}{10} |y_j-x_i|\ge 9 t_j
 $$
 for any $i$.
 Note that if  $z\in B(y_j, t_j)\cap \mathcal{C}_b(u_\e)$,
 $$
 r^*(z) > \frac{1}{2} \min_k |z -x_k |\ge 4 t_j.
 $$
 If $z\in B(y_j, t_j)\cap \mathcal{C}_g(u_\e)$, then $z\in B(x_i, r_i/4)$ for some $i$.
 Since
 $$
 10 t_j \le |y_j -x_i| \le |y_j-z| + |z-x_i| \le t_j + r_i/4,
 $$
 we have $t_j \le r_i /18$ and 
 $$
 r^*(z) \ge r_i /3 \ge 6t_j.
 $$
Hence, in both cases,  $r^*(z) \ge 4 t_j> \e_0^{-1}  \e$.
It follows that 
$$
\wN (u_\e, z,  t_j) \le  \ell -\delta_0,
$$
which leads to
$$
\wN (u_\e, z, c t_j ) \le \ell -1 +\delta_0.
$$ 

{\bf Step 5.} It remains to prove that
\begin{equation}\label{7-0-11}
\sum_{j\in J}  t_j^{d-2} \le C.
\end{equation}
Since $\{B(y_j, t_j/20)\}$ are disjoint, we only need to consider those $t_j$'s that are sufficiently small.
Let $V$ be the subspace of dimension $d-2$ or less, obtained in Step 2.
Suppose the dimension of $V$ is $d-3$ or less.
  Write
 $
 J=\cup_{k=0}^\infty  J_k,
 $
 where $j \in J_k$ if $t_j \in (2^{-k-1}, 2^{-k} ]$.
 Let $\Phi$ be the Lipschitz  graph  that $\{ x_i \}$ lies on.
 Since
 $$
 \text{dist}(y_j, \Phi ) \le 10 t_j,
 $$
 for $j \in J_k$,
  $B(y_j, t_j/20 )$ is contained  in the $11 \cdot 2^{-k}$ neighborhood of $\Phi$.
 Recall that  $\{ B(y_j, t_j/20) \}$ are disjoint.
 It  follows that
 $$
 \# J_k \le C 2^{k (d-3)}.
 $$
 As a result,
 $$
 \sum_j t_j^{d-2} \le \sum_k (2^{-k})^{d-2}  \# J_k \le C.
 $$
 
Finally, we consider the case where dim$(V)=d-2$. It follows by Remark \ref{remark-sp}  that  in this case, the solution $u_\e$ has no critical points in the set
$$
 \big\{ x\in B(x_i, c_0) \setminus B(x_i,  r_i/4): \  \text{dist} (x-x_i , V) > (1/10)  |x-x_i | \big\}.
$$
This implies that if $y\in \mathcal{C}(u_\e)\cap B(x_k, c_0)\setminus \cup_i B(x_i, r_i/4)$ for some $k$, then
$$
\text{\rm dist}(y- x_k, V) \le (1/10)  |y-x_k|.
$$
Since $y_j \in \mathcal{C}(u_\e)\cap B(0, 1/2) \setminus \cup_k B(x_i,  r_i/4)$ and
$\min_i |y_j-x_i|=10 t_j \le c_0$, 
we see that
$$
\text{\rm dist} (y_j-x_k, V) \le (1/10) |y_j -x_k|
$$
for some $k$,  and $|y_j -x_k|\sim t_j$.
As a result,  $\{ \mathbb{P}(B(y_j, t_j/20)): j \in J \}$ has a finite overlap in $V=\R^{d-2}$,
where  $\mathbb{P}: \R^d \to V$ is the projection operator.
Thus,
$$
\sum_j \chi_{\mathbb{P}(B(y_j, t_j/20))} \le C,
$$
and 
$$
\sum_j  \mathcal{H}^{d-2} (\mathbb{P} (B(y_j, t_j/20))) \le C,
$$
from which \eqref{7-0-11} follows.
This completes the proof of Lemma \ref{lemma-cover}
\end{proof}

\begin{lemma}\label{lemma-c-1}
Let $u_\e$ be the same as in Lemma \ref{lemma-cover}.
Then, if $0< \e< \e_0(L)$, 
\begin{equation}\label{c-01}
| \big\{ x: \text{\rm dist} (x, \mathcal{C} (u_\e)\cap B(0, 1/4)) <  t \big\} |
\le C t ^2
\end{equation}
for $0<t< 1$, where $C$ depends on $L$.
\end{lemma}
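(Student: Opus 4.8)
The plan is to establish, by induction on $\ell$, a scale-invariant covering estimate that contains Lemma \ref{lemma-c-1}. For $2\le\ell\le L$ let $(\ast_\ell)$ be the assertion: there is $C=C(L)$ so that if $A\in\mathcal{A}(\lambda,\Gamma,M,\mu)$ and $u_\e\in H^1(B(z,2\rho))$ is a non-constant solution of $\mathcal{L}_\e(u_\e)=0$ in $B(z,2\rho)$ with $0<\e<\e_0(L)\rho$ and
$$\wN(u_\e,x,\rho)\le\ell+\delta_0\qquad\text{for all }x\in B(z,\rho/2)\cap\mathcal{C}(u_\e),$$
then for every $0<t<\rho$ the set $\mathcal{C}(u_\e)\cap B(z,\rho/4)$ is covered by at most $C(\rho/t)^{d-2}$ balls of radius $t$. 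Granting $(\ast_\ell)$, Lemma \ref{lemma-c-1} follows with $z=0$, $\rho=1$: if $\mathcal{C}(u_\e)\cap B(0,1/4)\subset\bigcup_{k\le Ct^{-(d-2)}}B(p_k,t)$, its $t$-neighbourhood lies in $\bigcup_kB(p_k,2t)$, of volume $\le Ct^{-(d-2)}\cdot Ct^{d}=Ct^{2}$; for $1/4\le t<1$ the neighbourhood is trivially inside $B(0,2t)$. By translation and dilation it suffices to prove $(\ast_\ell)$ with $z=0$, $\rho=1$.

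The base case and the inductive step share a mechanism. Apply Lemma \ref{lemma-cover}: get balls $\{B(x_i,r_i/4)\}$ with $x_i\in B(0,1/2)\cap\mathcal{C}(u_\e)$, $\sum_i r_i^{d-2}\le C_0$, $\mathcal{C}(u_\e)\cap B(0,1/2)\subset\bigcup_iB(x_i,r_i/4)$, each either \emph{(a)} of bottom scale $r_i\asymp\e_0^{-1}\e$, or \emph{(b)} with $r_i>\e_0^{-1}\e$ and $\wN(u_\e,y,cr_i)\le(\ell-1)+\delta_0$ for all $y\in B(x_i,r_i/2)\cap\mathcal{C}(u_\e)$. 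For a type-(b) ball, cover $\mathcal{C}(u_\e)\cap B(x_i,r_i/4)$ by at most $Cc^{-d}$ balls $B(q_m,cr_i/4)$ with $q_m\in\mathcal{C}(u_\e)\cap B(x_i,r_i/4)$; the hypotheses of $(\ast_{\ell-1})$ hold about $q_m$ at radius $cr_i$ (property (3) of Lemma \ref{lemma-cover} gives the frequency bound on $B(q_m,cr_i/2)\subset B(x_i,r_i/2)$, and $\e<\e_0(L)\,cr_i$ once $\e_0(L)$ is chosen small enough to absorb the fixed factor $c=c(L)$ compounded through the at most $L$ levels of the induction), so $(\ast_{\ell-1})$ yields $\le C(cr_i/t)^{d-2}$ balls of radius $t$ per $m$ when $t<cr_i/4$, and one ball of radius $t$ per $m$ otherwise; in all cases $\mathcal{C}(u_\e)\cap B(x_i,r_i/4)$ is covered by $\le C(L)(r_i/t)^{d-2}$ balls of radius $t$. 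For a type-(a) ball, rescale: $w(y)=u_\e(x_i+r_iy)$ solves a divergence-form equation with $\Gamma$-periodic Lipschitz coefficients of \emph{fixed} period $\e/r_i\asymp\e_0(L)$, hence of bounded Lipschitz constant, and $\wN(w,0,1)=\wN(u_\e,x_i,r_i)\le C(L)$ by Theorem \ref{d1-thm}; the classical Minkowski estimate for critical sets of \cite{Naber-2017} then applies to $w$ and, after undoing the scaling, covers $\mathcal{C}(u_\e)\cap B(x_i,r_i/4)$ by $\le C(L)(r_i/t)^{d-2}$ balls of radius $t$ when $t<r_i/4$. For $\ell=2$ the type-(b) alternative cannot occur: $\wN(u_\e,x_i,cr_i)\le 1+\delta_0<3/2$ would force $|\nabla u_\e(x_i)|\ne0$ via Theorem \ref{low-L} (applied to a suitable rescaling), contradicting $x_i\in\mathcal{C}(u_\e)$; only the type-(a) analysis survives, so $\ell=2$ is a genuine base case.

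It remains to assemble these and cope with the many bottom-scale balls when $t$ is not tiny. Fix $0<t<1/4$ and split $\{B(x_i,r_i/4)\}$ into those with $r_i\ge 4t$ and those with $r_i<4t$. Summing the per-ball bounds above over the first group gives $\le C(L)t^{-(d-2)}\sum_i r_i^{d-2}\le C(L)C_0\,t^{-(d-2)}$ balls of radius $t$. For the second group, $\mathcal{C}(u_\e)\cap B(x_i,r_i/4)\subset B(x_i,t)$, so it suffices to cover $\{x_i:r_i<4t\}$ efficiently; the corresponding disjoint subfamily $\{B(x_i,r_i/20):r_i<4t\}$ satisfies the hypotheses of Theorem \ref{thm-sub3} (with $\gamma=1/10$ and $R\asymp\e_0(L)^{-1}$), so, up to $O(1)$ groups as in Step 2 of the proof of Lemma \ref{lemma-cover}, the centres lie on graphs of Lipschitz functions over subspaces of dimension $\le d-2$ inside $B(0,1)$; each such graph is covered by $\le Ct^{-(d-2)}$ balls of radius $t$, hence $\bigcup_{r_i<4t}B(x_i,t)$ by $\le Ct^{-(d-2)}$ balls of radius $2t$. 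Combining the two groups, $\mathcal{C}(u_\e)\cap B(0,1/4)$ is covered by $\le C(L)t^{-(d-2)}$ balls of radius $2t$; relabelling $2t$ as $t$ proves $(\ast_\ell)$ and hence Lemma \ref{lemma-c-1}.

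The main obstacle is exactly the bottom-scale bookkeeping just used: one must fix $\e_0(L)$ small enough that the radius-shrinking factor $c=c(L)$ produced by Theorem \ref{fq-thm} inside Lemma \ref{lemma-cover}, compounded over the at most $L$ inductive levels, still keeps $\e$ below the thresholds needed to invoke $(\ast_{\ell-1})$, Theorem \ref{low-L}, and the estimate of \cite{Naber-2017}; and one must consolidate the up to $C\e^{-(d-2)}$ balls of scale $\asymp\e$ whenever $t\gtrsim\e$, which is precisely where the common approximate-tangent-plane structure of Theorems \ref{thm-sub2}--\ref{thm-sub3} (and Remark \ref{remark-sp}) is indispensable and where the uniformity in $\e$ of the final bound originates.
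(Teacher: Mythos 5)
Your overall architecture -- induction on $\ell$ through Lemma \ref{lemma-cover}, rescaling each ball, terminating via Theorem \ref{low-L}, and invoking \cite{Naber-2017} on the bottom-scale balls of radius $\asymp\e$ -- is essentially the paper's argument, and the threshold bookkeeping you flag (the factor $c$ from Theorem \ref{fq-thm} versus the cutoff $\e_0^{-1}\e$) is a real but benign issue: the paper handles it not by shrinking $\e_0$ but by letting the ``bottom-scale'' class extend down to $c_0\e$, so that any ball whose shrunken radius $cr_i$ falls below $3\e_0^{-1}\e$ is simply kept rather than decomposed; your ``choose $\e_0(L)$ smaller'' fix also works and does not actually compound over levels. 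The genuine gap is in your treatment of intermediate $t$, i.e.\ the second group $\{r_i<4t\}$. You assert that the disjoint subfamily $\{B(x_i,r_i/20):r_i<4t\}$ satisfies the hypotheses of Theorem \ref{thm-sub3}, but hypothesis (2) there requires the doubling index at $x_i$ to be trapped in $[\ell-\delta,\ell+\delta]$ on dyadic scales all the way down to $\asymp r_i$, and the \emph{statement} of Lemma \ref{lemma-cover} gives you no such trapping for its centers. Worse, for part of the family actually constructed in the proof of Lemma \ref{lemma-cover} (the balls $B(y_j,t_j)$ covering $\mathcal{C}_b(u_\e)$ in Step 3), property (3) says precisely that the doubling index at the center has already dropped to $\le\ell-1+\delta_0$ at scale comparable to the radius, so the trapping hypothesis fails there. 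With only $\sum_i r_i^{d-2}\le C$ and the dichotomy (3) at your disposal, nothing prevents, a priori, on the order of $t^{-(d-1)}$ such small balls being $t$-separated along a $(d-1)$-dimensional set, in which case your claimed cover of $\bigcup_{r_i<4t}B(x_i,t)$ by $Ct^{-(d-2)}$ balls of radius $2t$ would be false; so this step does not follow from the cited ingredients.

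The conclusion you want is salvageable, but only with information internal to the proof of Lemma \ref{lemma-cover} (every center $y_j$ lies within $10t_j\lesssim t$ of the fixed Lipschitz graph over the $(d-2)$-dimensional plane $V$ built from the $\mathcal{C}_g$-centers, so all small balls sit in an $O(t)$-neighborhood of that graph), or by the paper's device, which sidesteps the consolidation problem entirely: for $t\in(\e,c_0)$ one re-runs the whole covering construction with the artificial bottom cutoff $r^*(x)=\max\{r_0(x),\e_0^{-1}t\}$, obtaining directly $\le Ct^{2-d}$ balls of radius $\asymp t$ with $\sum s_i^{d-2}\le C$, and reserves the local result of \cite{Naber-2017} for $t<\e$ only. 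As written, your proof of the key consolidation step for $\e\lesssim t$ rests on an application of Theorem \ref{thm-sub3} whose hypotheses you cannot verify, and this is exactly the place where the uniformity in $\e$ is at stake.
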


\begin{proof}

Let $B(x_i, r_i/4)$ be a ball constructed in Lemma \ref{lemma-cover} with radius $r_i> 6\e^{-1}_0 \e$.
We may cover this ball by  a finite number of smaller balls $\{B(z_k, cr_i) \}$ with $z_k \in B(x_i, r_i)$.
This allows us to replace the third property in Lemma \ref{lemma-cover} by the following: For each ball $B(x_i, r_i)$,  either $c_0^{-1} \e \le r_i \le 6\e^{-1}_0 \e$ or 
$6 \e_0^{-1} \e < r_i < c$ and
\begin{equation}\label{re-7-00}
\wN (u_\e, y, r_i)
\le \ell - 1 +\delta_0 \quad 
\text{ for any } y \in B(x_i, r_i)\cap \mathcal{C}_\e (u_\e) \cap B(0, 1/2).
\end{equation}
Since $B(x_i, c)\cap B(0, 1/4)\neq \emptyset$ implies $B(x_i, c)\subset B(0, 1/2)$, we may remove $B(0, 1/2)$ in \eqref{re-7-00}.
This allows us to carry out an  induction argument on $\ell$.

Indeed, fix $i$ with $r_i> 6\e_0^{-1} \e$ and property \eqref{re-7-00}, and let $\phi (x) =u_\e (x_i +rx)$.
Then 
$$
\text{div} (B(x/(\e/r_i)) \nabla \phi ) =0 \quad \text{ in } B(0, 2),
$$
 where $B(y)= A(y+ (rx_i)/\e)$.
 Since $B \in \mathcal{A}(\lambda, \Gamma, M, \mu)$ and $(\e/r_i) < \e_0$, 
 we may apply Lemma \ref{lemma-cover} to the solution $\phi$ to obtain a cover for 
 $\mathcal{C} (\phi)\cap B(0, 1/4)$. As a result, we obtain a collection of balls $\{ B(x_{i, k}, r_{i, k} ) \}$ with the following properties:
 \begin{equation}
 \mathcal{C} (u_\e) \cap B(x_i, r_i/4)
 \subset \bigcup_k B(x_{i, k}, r_{i, k}/4),
 \end{equation}
 \begin{equation}
 \sum_k r_{i, k}^{d-2} \le C r_i^{d-2},
 \end{equation}
 and for each ball $B(x_{i, k}, r_{i, k})$, either $c_0 \e \le r_{i, k} \le 6 \e_0^{-1} \e$, or $r_{i,k} > 6\e_0^{-1} \e$ and
 \begin{equation}\label{re-7-01}
\wN (u_\e, y, r_{i, k} )
\le \ell - 2 +\delta_0 \quad 
\text{ for any } y \in B(x_{i, k} , r_{i, k} )\cap \mathcal{C}_\e (u_\e).
\end{equation}
We may continue the process 
 until we reach $\ell=1$. At each step, we keep those balls whose radii are between $c_0 \e $ and $6\e_0^{-1} \e$
and decompose those whose radii are greater than $6\e_0^{-1} \e$.
By  Theorem \ref{low-L},  if the right-hand side of the inequality \eqref{re-7-00} is  bounded by $3/2$, then
$$
\mathcal{C}(u_\e)\cap B(x_i, r_i/4)=\emptyset.
$$
As a result, 
we have constructed  a finite collection of balls $\{ B(z_i, s_i)\}$ with the properties that
\begin{equation}\label{final-1}
\mathcal{C}(u_\e)\cap B(0, 1/4) \subset \bigcup B(z_i, s_i/4),
\end{equation}
\begin{equation}\label{final-2}
\sum_i s_i^{d-2} \le C,
\end{equation}
and $c_0  \e \le  s_i \le 6 \e_0^{-1} \e$, where $\e_0=\e_0(L)$.

We point out the argument above works equally well if we replace $\e$ by any fixed $t \in (\e, c_0)$.
More precisely, the same argument also gives a collection of balls $\{ B(z_i, s_i)\}$  with the properties \eqref{final-1},
\eqref{final-2}, and $c_0t \le s_i \le C_0 t$ for all $i$.
To do this, in the place of \eqref{7-0-3},  one simply sets
\begin{equation}\label{7-0-3t}
r_t^* (x)= \sup \big\{ \e_0^{-1} t  < s\le 1/2: \wN (u_\e, x, s) \le  \ell -\delta _0 \big\}.
\end{equation}
If no such $s$ exists, i.e., $\wN(u_\e, x, s)>  \ell-\delta_0$ for all $\e_0^{-1} t < s\le 1/2$, define $r_t^* (x) =\e_0^{-1} t$. 
On the other hand, if $t\in (0, \e)$, we may use the local results in \cite{Naber-2017} on each ball $B(z_i, s_i)$ with $c_0\e \le s_i \le C_0 \e$,
 constructed for the case $t=\e$.
This gives $\{B(z_{i,k}, t) \}$ with the properties that
\begin{equation}\label{final-3}
\mathcal{C}(u_\e)\cap B(z_i, s_i /4) \subset \bigcup_k B(z_{i, k} , t/4),
\end{equation}
\begin{equation}\label{final-4}
\sum_k  t^{d-2} \le C s_i^{d-2}.
\end{equation}
In summary, we have proved that for any $t\in (0, c)$, the set $\mathcal{C} (u_\e)\cap B(0, 1/4)$ can be 
covered by a collection of balls $\{ B(z_i, Ct)\}$, and the number of the balls is bounded by $C t^{2-d}$, where $C$ depends on $L$.
This gives   the Minkowski  estimate \eqref{c-01}.
\end{proof}

We are now ready to give the proof of Theorem \ref{main-thm}.

\begin{proof}[\bf Proof of Theorem \ref{main-thm}]
Assume $A=A(y)$ satisfies the conditions \eqref{ellipticity}, \eqref{periodicity}, \eqref{smoothness} and \eqref{inv-0}.
Using the change of variables in Remark \ref{symm-re}, we may assume that $\widehat{A} +(\widehat{A})^T=2I$, and thus
$A\in \mathcal{A}(\lambda, \Gamma, M, \mu)$. Note that if $v_\e (x)=u_\e (S^{-1} x)$, as in Remark \ref{symm-re}, 
the doubling condition \eqref{doubling-0} is equivalent to 
$$
\fint_{B(0, 2)} v_\e^2 \le 4^N \fint_{B(0, 1)} v_\e^2.
$$

Let $u_\e\in H^1(B(0, 2))$ be a non-constant solution of $\mathcal{L}_\e (u_\e)=0$ in $B(0, 2)$.
Suppose that
\begin{equation}\label{d-7} 
\fint_{B(0, 2)} u_\e^2 \le 4^{N_0} \fint_{B(0, 1)} u_\e^2,
\end{equation}
for some $N_0>1$. It follows that
$$
\int_{B(0, 1)} |\nabla u_\e|^2 \le C \int_{B(0, 2)} u_\e^2 \le C \int_{B(0, 1)} u_\e^2
\le C \int_{\partial B(0, 1)} u_\e^2,
$$
where $C=C(N_0)$ depends on $N_0$. 
Thus,  $N(u_\e, 0, 1)\le C(N_0)$ and 
as a result, the case $\e\ge \e_0=\e_0(N_0)$ is covered by the results in \cite{Han-Lin-H-1998, Naber-2017}.
The constants may depend on $\e_0$, and
the periodicity condition is not needed.

To treat the case $0< \e< \e_0$, we note that \eqref{d-7} implies that for any $0< r< 1$,
\begin{equation}\label{f-100}
\fint_{B(0, r)} u_\e^2 \le C(N_0)  \fint_{B(0, r/2)} u_\e^2,
\end{equation}
by Theorem 1.2  in \cite{LS-Nodal}.
It follows that 
$$
\fint_{\partial B(x, 1)} u_\e^2 
\le C \fint_{B(0, 2)} u_\e ^2
\le C \fint_{B(0, 1/8)} u_\e^2
\le C \int_{B(x, 1/2)} u_\e^2
\le C \fint_{\partial B(x, 1/2)} u_\e^2,
$$
if $x\in B(0, 1/4)$, where $C$ depends on $N_0$.
It follows from Lemma \ref{lemma-c-1} that 
\begin{equation}\label{f-101}
|\big\{ x\in \R^d: \text{dist} (x, \mathcal{C}(u_\e)\cap B(0, 1/8)) < t \big\} |\le C(N_0) t^2.
\end{equation}
Finally, using harmonic approximation, one may show that
$$
\fint_{B(x, r)} u_\e^2 \le C(N_0) \fint_{B(x, r/2)} u_\e^2
$$
for any $x\in B(0, 3/4)$ and $0< r< r_0$, where $r_0$ is sufficiently small.
By a simple covering and scaling argument,
this allows us to replace $B(0, 1/8)$ in \eqref{f-101} by $B(0, 1/2)$  and completes the proof of Theorem \ref{main-thm}.
\end{proof}


 \bibliographystyle{amsplain}
 
\bibliography{Lin-Shen-2021.bbl}

\bigskip

\begin{flushleft}

Fanghua Lin, 
Courant Institute of Mathematical Sciences, 
251 Mercer Street, 
New York, NY 10012, USA.

Email: linf@cims.nyu.edu

\bigskip

Zhongwei Shen,
Department of Mathematics,
University of Kentucky,
Lexington, Kentucky 40506,
USA.

E-mail: zshen2@uky.edu
\end{flushleft}

\bigskip

\medskip

\end{document}